\documentclass[a4paper,10pt]{article}
\usepackage[utf8]{inputenc}
\usepackage{latexsym}
\usepackage{amsmath}
\usepackage{amssymb}
\usepackage{amsthm}
\usepackage{amsfonts}
\usepackage[mathcal]{eucal}
\usepackage{color}
\usepackage{graphicx}
\usepackage{enumerate}
\usepackage[all,cmtip]{xy}
\usepackage{url}
\usepackage{placeins}
\usepackage[sc]{mathpazo}
\usepackage{mathabx}
\usepackage{titling}

\DeclareFontFamily{OT1}{pzc}{}
\DeclareFontShape{OT1}{pzc}{m}{it}{<-> s * [1.10] pzcmi7t}{}
\DeclareMathAlphabet{\mathpzc}{OT1}{pzc}{m}{it}

\newcommand{\N}{\mathbf{N}}

\newcommand{\Z}{\mathbf{Z}}

\newcommand{\kar}{\mathrm{char}}

\newcommand{\ck}{\mathpzc{k}}
\newcommand{\cV}{\mathpzc{V}}
\newcommand{\resp}{resp., }
\newcommand{\ie}{\textit{i.e.}, }
\newcommand{\dig}{\mathrm{dig}}
\newcommand{\clm}{\mathrm{clm}}
\newcommand{\minclm}{\mathrm{minclm}}
\newcommand{\pointref}[1]{(\ref{#1})}

\newcommand{\noqedhere}{\renewcommand{\qedsymbol}{}}
\newtheorem{theorem}{Theorem}
\newtheorem{lemma}{Lemma}
\newtheorem{conjecture}{Conjecture}
\newtheorem{definition}{Definition}
\newtheorem{proposition}{Proposition}
\newtheorem{example}{Example}
\newtheorem{remark}{Remark}

\title{Criteria for regularity of Mahler power series and Becker's conjecture}
\author{Tomasz Kisielewski}

\date{}

\begin{document}
\maketitle
 \begin{abstract}
		Allouche and Shallit introduced the notion of a regular power series as a
		generalization of automatic sequences. Becker showed that all
		regular power series satisfy Mahler equations and conjectured
		equivalent conditions for the converse to be true. We prove a stronger form of
		Becker's conjecture for a subclass of Mahler power series.
	\end{abstract}
	\setcounter{page}{2}
	\section{Introduction}
	 We first give some historical background and motivation for the problems
		in this paper, starting with automatic and regular sequences and moving
		on to power series and Mahler equations.
		\subsection{Automatic and regular sequences}
			Fix a natural number $k \in \N$, $k \geq 2$. A sequence $\left( a_n \right)$
			is called $k$-automatic if there exists a
			finite automaton which, given the base $k$ expansion of $n$, stops at a state
			corresponding to $a_n$. This definition of $k$-automatic sequences was
			introduced by Cobham \cite{Cobham72}, who also proved an alternative
			characterisation of these sequences, which will be much more useful for us.
			\begin{definition}
				Let $\left( a_i \right)_{i \in \N}$ be a sequence. The set of subsequences
				of this sequence $\left\{ \left( a_{k^e i + r} \right)_{i \in \N} \colon e
				\in \N, 0 \leq r < k^e \right\}$ is called the \emph{$k$-kernel} of the
				sequence.
			\end{definition}
			\begin{definition}
				A sequence $\left( a_i \right)_{i \in \N}$ is called \emph{$k$-automatic} if
				its $k$-kernel is finite.
			\end{definition}
			Based on this definition, Allouche and Shallit \cite{AlloucheShallit92}
			defined and investigated a wider class of sequences. They were working with
			sequences taking values in rings, but for simplicity we will restrict our
			attention to a field $\ck$.
			\begin{definition}
				A sequence $\left( a_i \right)_{i \in \N}$, $a_i \in \ck$, is called
				\emph{$k$-regular} if the vector subspace of $\ck^\N$ generated over $\ck$ by
				its $k$-kernel is finitely dimensional.
			\end{definition}
			Among many other interesting properties, Allouche and Shallit state the following theorem.
			\begin{theorem}\label{thm:regularityAutomaticity}
				A sequence is $k$-automatic if and only if it is $k$-regular and takes
				only finitely many values.
			\end{theorem}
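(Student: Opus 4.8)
The plan is to prove both directions separately, using the two characterizations that are now available: $k$-regularity as finite-dimensionality of the span of the $k$-kernel, and $k$-automaticity as finiteness of the $k$-kernel itself (the equivalence with the automaton definition being Cobham's result, which I may take for granted).

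For the forward direction, suppose $\left( a_i \right)$ is $k$-automatic, \ie its $k$-kernel $K$ is finite. Then the span of $K$ is spanned by finitely many elements, so it is finitely dimensional, giving $k$-regularity immediately. Moreover, a $k$-automatic sequence takes only finitely many values: each $a_n$ is determined by the terminal state reached by the automaton on input $n$, and there are only finitely many states, so the set of attainable values $\left\{ a_n \colon n \in \N \right\}$ injects into the (finite) state set. This half requires no real work once the definitions are unwound.

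The reverse direction is where the content lies. Assume $\left( a_i \right)$ is $k$-regular and takes values in a finite set $S \subseteq \ck$. Let $V$ be the span of the $k$-kernel $K$; by hypothesis $\dim_\ck V = d < \infty$. The key observation is that although $V$ is finite dimensional, a priori it could contain infinitely many distinct kernel elements, so I must show that $K$ itself is actually finite. First I would note that every element of $K$ is a sequence taking values in the finite set $S$, because each subsequence $\left( a_{k^e i + r} \right)$ consists of terms of the original sequence. The plan is to combine the finite-dimensionality of $V$ with the constraint that $K$ lands in $S^\N$: a finite-dimensional vector space over $\ck$ that is spanned by vectors all of whose coordinates lie in a fixed finite set can contain only finitely many such $S$-valued vectors. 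The main obstacle is making this counting argument precise, since $\ck$ may be infinite and $V$ may contain many vectors whose entries escape $S$; I only want to bound the $S$-valued ones. One clean way is to fix a basis of $V$ consisting of $d$ kernel elements, write each kernel element in coordinates with respect to this basis, and argue that the map sending a sequence to its first $N$ coordinates (for $N$ large enough that these $N$ positions already pin down the representation of any basis vector) is injective on $K$; since there are at most $\lvert S \rvert^N$ possible initial segments, $K$ is finite.

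Carrying this out, the crucial step is choosing $N$ so that evaluation at positions $0, 1, \dots, N-1$ is injective on $V$, which is possible precisely because $V$ is $d$-dimensional: the coordinate functionals $\left( v \mapsto v_j \right)_{j \in \N}$ separate points of $V$, so finitely many of them already do, say those indexed by a set $J$ with $\lvert J \rvert \leq d$. Then a kernel element is determined by its values at positions in $J$, each of which lies in $S$, yielding at most $\lvert S \rvert^{\lvert J \rvert}$ kernel elements and hence $\lvert K \rvert \leq \lvert S \rvert^d < \infty$. This shows the $k$-kernel is finite, so the sequence is $k$-automatic. I expect the separation-of-points argument for the coordinate functionals to be the technical heart; everything else is a direct translation between the kernel-finiteness and span-finite-dimensionality formulations.
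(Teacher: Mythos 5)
Your proof is correct, but note that the paper itself never proves Theorem \ref{thm:regularityAutomaticity}: it is quoted from Allouche and Shallit, so your argument supplies a proof where the paper only cites one. Both directions of your argument are sound. In the reverse direction, the fact you flag as the technical heart is indeed the crux, and it does work: the coordinate functionals restricted to $V$ separate points of $V$, hence their joint kernel in $V$ is zero, hence they span all of the dual space $V^*$ (otherwise their common annihilator in $V$ would be a nonzero subspace); extracting a basis of $V^*$ from among them gives positions $J$ with $|J| \leq d$ at which evaluation is injective on $V$, and since every kernel element takes values in $S$ at those positions, $|K| \leq |S|^d$. This is essentially the standard field-case argument (the paper works over a field $\ck$, so none of the ring-theoretic care needed in Allouche and Shallit's original setting arises). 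Two minor remarks: in the forward direction you can avoid invoking Cobham's automaton characterization entirely, since finiteness of the kernel already forces finitely many values --- for any $n$, choosing $e$ with $k^e > n$ shows that $a_n$ is the initial term of the kernel element $\left( a_{k^e i + n} \right)_{i \in \N}$, so the set of values injects into the finite kernel; and your first sketch (``the first $N$ coordinates pin down the representation of any basis vector'') is vaguer than, and superseded by, the separating-functionals argument you settle on, so you should present only the latter.
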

			This theorem fully describes the relation between regular and automatic
			sequences.

			Automatic and regular sequences have many applications in, among others,
			transcendence theory, game theory, questions related to computability
			properties of expansions of numbers in various bases, dynamical systems,
			differential geometry, Fourier analysis and language theory. We refer the interested
			reader to the excellent book by Allouche and Shallit
			\cite{AlloucheShallit03}. For examples of automatic and regular sequences we
			refer to the same book and also the papers \cite{AlloucheShallit92} and
			\cite{AlloucheShallit03paper} by the same authors.
		\subsection{Power series and Mahler equations}
		 First we note that the definitions from the previous section can be
			naturally extended to formal power series.
			\begin{definition}
				A formal power series $\sum_{i = 0}^{\infty} a_i z^i = f(z) \in \ck\left[
				\left[	z \right] \right]$ is called $k$-regular (\resp $k$-automatic) if the
				sequence of its coefficients $\left( a_i \right)_{i \in \N}$ is $k$-regular
				(\resp $k$-automatic).
			\end{definition}
			This definition was also established by Allouche and
			Shallit \cite{AlloucheShallit92} along with some basic results, among which
			was the fact that $k$-regular power series form a ring, but not a field.

			In a series of papers concerning transcendence theory Mahler \cite{Mahler29,
			Mahler30-1, Mahler30-2} considered a new type of functional equations.
			He proved that any power series with coefficients in a number field
			satisfying an equation of such a type takes a
			transcendental value at algebraic points in the radius of convergence. His
			method was later extended by many others and we refer the interested
			reader to Nishioka \cite{Nishioka96}.
			\begin{definition}
				A formal power series $f(z) \in \ck\left[ \left[ z \right] \right]$ is
				called \emph{$k$-Mahler} if it satisfies a functional equation of the form
				\begin{equation*}
					f\left( z \right) = \sum_{i = 1}^{n} c_i\left( z \right) f\left(
					z^{k^i} \right)
				\end{equation*}
				for some rational functions $c_i\left( z \right) \in \ck\left( z
				\right)$. We call $n$ the order of the Mahler equation.
			\end{definition}
			There are a few equivalent definitions of $k$-Mahler power series which
			differ slightly in the precise form of the satisfied equation. For these definitions and
			proofs of equivalence we refer the reader to Adamczewski and
			Bell \cite{AdamczewskiBell13}, Becker \cite{Becker92} and Dumas \cite{Dumas93}.

			The latter two papers also explore the relation between $k$-regular and
			$k$-Mahler power series. These two classes of power series are rather close.
			More precisely, all $k$-regular series are $k$-Mahler and the
			strongest known result considering the converse is the following theorem.
			\begin{theorem}\label{thm:DumasReguralityCriterion}
				Let the power series $f\left( z \right) \in \ck\left[ \left[ z
				\right] \right]$ satisfy a Mahler type equation with coefficients $c_i\left(
			 z \right) \in \ck\left( z \right)$. If all $c_i\left( z \right)$ have poles
			 only at $0$ and roots of unity of order not coprime to $k$, then $f\left( z
			 \right)$ is $k$-regular.
			\end{theorem}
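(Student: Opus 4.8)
The plan is to restate $k$-regularity in terms of decimation operators and then use the Mahler equation to build, by hand, a finite-dimensional stable space containing $f$. For $0\le r<k$ define the operator $\Lambda_r$ on $\ck((z))$ by $\Lambda_r\bigl(\sum_m a_m z^m\bigr)=\sum_i a_{ki+r}z^i$. Writing $r=r_0+r_1k+\dots+r_{e-1}k^{e-1}$ one checks that $\Lambda_{r_{e-1}}\cdots\Lambda_{r_0}(f)=\sum_i a_{k^e i+r}z^i$, so the power series attached to the members of the $k$-kernel are exactly the orbit of $f$ under the monoid generated by $\Lambda_0,\dots,\Lambda_{k-1}$; hence $f$ is $k$-regular if and only if this orbit spans a finite-dimensional $\ck$-vector space. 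The engine of the argument is the identity $\Lambda_r\bigl(g(z^k)\,h(z)\bigr)=g(z)\,\Lambda_r(h(z))$, valid for Laurent series. Writing $\sigma g(z)=g(z^k)$, it gives $\Lambda_r(g\,\sigma^i f)=(\sigma^{i-1}f)\,\Lambda_r(g)$ for $i\ge 1$, while for $i=0$ the Mahler relation $f=\sum_{i=1}^n c_i\,\sigma^i f$ yields $\Lambda_r(gf)=\sum_{i=1}^n(\sigma^{i-1}f)\,\Lambda_r(c_i g)$.

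These two formulas show it suffices to find a finite-dimensional $\ck$-subspace $W\subseteq\ck(z)$ with $1\in W$ and all $c_i\in W$, closed under every $\Lambda_r$ and under $g\mapsto\Lambda_r(c_i g)$. Indeed, for such a $W$ the space $V=\mathrm{span}_\ck\{\,g\,\sigma^i f:g\in W,\ 0\le i<n\,\}$ has dimension at most $n\dim W$, contains $f=1\cdot f$, and is stable under all $\Lambda_r$ by the reduction formulas; hence the $k$-kernel spans a finite-dimensional space. So the whole problem is reduced to controlling what $\Lambda_r$ and multiplication by the $c_i$ do to the poles of a rational function.

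Next I would record the local behaviour of $\Lambda_r$. A direct coefficient computation gives $\Lambda_r\bigl((z-\zeta)^{-1}\bigr)=\zeta^{\,k-r-1}(z-\zeta^k)^{-1}$, and differentiating in the parameter $\zeta$ shows that $\Lambda_r$ carries a pole of order $j$ at $\zeta$ into a combination of poles of order at most $j$ at $\zeta^k$; a pole of order $l$ at $0$ goes to order at most $\lceil(l+k-1)/k\rceil$, and the polynomial degree is divided by $k$. Thus $\Lambda_r$ moves a pole site $\zeta$ to $\zeta^k$, and if $\zeta$ has order $d$ then $\zeta^k$ has order $d/\gcd(d,k)$. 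Since the forward orbit of a root of unity under $x\mapsto x^k$ is finite (the order strictly drops until it becomes coprime to $k$, after which $x\mapsto x^k$ merely permutes the roots of that order), the union $\Sigma$ of the forward orbits of the finitely many poles of the $c_i$ is a finite set of roots of unity closed under $x\mapsto x^k$, and every element of the orbit of $f$ has poles only at $0$ and in $\Sigma$. Because the pole order at $0$ and the polynomial degree are each contracted by a factor $k$ up to an additive constant, an elementary fixed-point estimate keeps them bounded.

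The main obstacle is that $\Lambda_r$ does \emph{not} contract the pole order at a root of unity: it only takes the maximum order over the fibre $\{\eta:\eta^k=\zeta\}$, whereas multiplication by $c_i$ \emph{adds} pole orders, so a priori the orders at points of $\Sigma$ could grow without bound. This is exactly where the hypothesis is decisive. Stratify $\Sigma$ by height, the height of $\zeta$ being the number of steps of $x\mapsto x^k$ needed to reach coprime-to-$k$ order; let $H$ be the maximal height and $C$ a bound for the pole orders of the $c_i$. Since the poles of the $c_i$ lie at roots of unity of order \emph{not} coprime to $k$, they sit only at heights $\ge 1$. Combining the "maximum over the fibre" behaviour of $\Lambda_r$ with the additive effect of the $c_i$, one gets for $M_j(g):=$ the largest pole order of $g$ at a height-$j$ point the estimates $M_j(\Lambda_r g)\le M_{j+1}(g)$ and $M_j(\Lambda_r(c_i g))\le M_{j+1}(g)+C$ (with the height-$0$ cases picking up an extra $\max$ against $M_0$, and $M_j\equiv 0$ for $j>H$). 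One then checks that the region $\{\,M_j\le (H-j+1)C\text{ for all }j\,\}$ is preserved by both operations and contains $g=1$, so every pole order at a root of unity stays bounded by $(H+1)C$. Taking $W$ to be the finite-dimensional space of rational functions with poles confined to $0$ and $\Sigma$ and with pole orders and polynomial degree bounded by the constants just produced completes the construction, and with it the proof. I expect this height/fibre bookkeeping — verifying that the order inflation caused by the $c_i$ is always flushed down to, and absorbed at, the coprime-order sink without accumulating — to be the delicate heart of the argument.
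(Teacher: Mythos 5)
This theorem is one the paper does not prove: it is imported from Dumas's thesis (Th\'eor\`eme 30), with Becker credited for the weaker polynomial-coefficient case, and it is invoked throughout the paper as a black box. So there is no internal proof to compare against; your proposal supplies an argument the paper deliberately omits. That said, your proof is correct, and it is a faithful reconstruction of the Becker--Dumas strategy, built from exactly the toolkit the paper assembles in Section \ref{subs:mahlerOperators} for other purposes: the Cartier-operator criterion for regularity (Lemma \ref{lem:lambdaRegularity}) and the product identity of Lemma \ref{lem:lambdaProp}. The reduction to finding a finite-dimensional subspace $W \subseteq \ck\left( z \right)$ containing $1$ and closed under $g \mapsto \Lambda_r\left( g \right)$ and $g \mapsto \Lambda_r\left( c_i g \right)$ is sound, and your height stratification correctly isolates the one place where the hypothesis enters: the fibre of $x \mapsto x^k$ over a point of height $j \geq 1$ lies entirely at height $j+1$, multiplication by $c_i$ can add pole order only at heights $\geq 1$, and therefore the region $\left\{ M_j \leq \left( H-j+1 \right)C \textrm{ for all } j \right\}$ is invariant, with the height-zero sink absorbing rather than accumulating precisely because the $c_i$ are regular at roots of unity of order coprime to $k$. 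One repair is needed for full generality: the paper allows $\kar\ck = p > 0$ (with $k$ coprime to $p$), and your justification that $\Lambda_r$ sends a pole of order $j$ at $\zeta$ to poles of order at most $j$ at $\zeta^k$ by differentiating in the parameter $\zeta$ breaks down there, since the relevant factorials may vanish. Argue instead via the identity you already have: $\left( z-\zeta \right)^{-j} = u\left( z \right)\left( z^k - \zeta^k \right)^{-j}$ with $u\left( z \right) = \left( \left( z^k-\zeta^k \right)/\left( z-\zeta \right) \right)^j$ a polynomial, so that $\Lambda_r\left( \left( z-\zeta \right)^{-j} \right) = \left( z-\zeta^k \right)^{-j}\Lambda_r\left( u \right)$, which exhibits the desired pole behaviour in every characteristic.
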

			This theorem has been shown by Dumas \cite[Theorem 30]{Dumas93}, while
			Becker \cite[Theorem 2]{Becker92} slightly earlier showed a weaker result, with
			$c_i\left( z \right)$ being polynomials. Due to that, Adamczewski and
			Bell used the name \emph{$k$-Becker} for power series satisfying a
			Mahler equation with polynomial coefficients, so by analogy we will use the
			name \emph{$k$-Dumas} for power series satisfying the conditions of Theorem
			\ref{thm:DumasReguralityCriterion}. One of our main results is that with
			some additional assumptions the class of $k$-Dumas power series is exactly
			the class of $k$-regular power series (Theorem \ref{thm:fullEquivalence}) and
			we conjecture that this is also the case in general (Conjecture
			\ref{conj:fullEquivalence}).

			Becker also used the fact that $k$-regular power series form a ring to
			conjecture that all $k$-regular power series are  quotients of $k$-Becker
			power series and polynomials. We investigate this claim and show that it
			would follow from the converse of Theorem
			\ref{thm:DumasReguralityCriterion}.

			The relations between regular, Becker, and Mahler power series are not only
			interesting as a matter of mathematical curiosity. They appeared in the proof
			of an extended version of Cobham's theorem by Adamczewski and Bell
			\cite{AdamczewskiBell13}. The original Cobham's theorem states that a sequence
			that is $k$- and $l$-automatic for two multiplicatively independent integers
			$k$ and $l$ (that is $k$ is not a power of $l$ and $l$ is not a power of $k$)
			is ultimately periodic. Adamczewski and Bell first extended this to the
			case of regular sequences and then proved that $k$- and $l$-Mahler power
			series for multiplicatively independent integers $k$ and $l$ are rational
			functions. The latter proof was much more complicated than the former
			(using methods from commutative algebra and Chebotar\"ev's theorem as well as
			analysis), so a simpler characterisation of the relation between regular and
			Mahler power series might have helped. They used the characterisation given
			by the following lemma, which is a slightly modified version of an original
			result of Dumas \cite[Theorem 31]{Dumas93}.
			\begin{lemma}\label{lem:MahlerAlmostRegular}
				Let $f\left( z \right) \in \ck\left[ \left[ z \right] \right]$ be a Mahler
				power series satisfying the equation
				\begin{equation*}
					\sum_{i = 0}^{n} c_i\left( z \right) f\left( z^{k^i} \right) = 0
				\end{equation*}
				where $c_i\left( z \right) \in \ck\left[ z \right]$ are polynomials and
				$c_0\left( 0 \right) = 1$. Then there exists a Becker power series $g\left( z
				\right)$ such that
				\begin{equation*}
					f\left( z \right) = \left( \prod_{i = 0}^\infty c_0\left( z^{k^i} \right)
					\right)^{-1} g\left( z \right).
				\end{equation*}
			\end{lemma}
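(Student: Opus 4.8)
The plan is to construct $g$ explicitly as $g(z) = P(z) f(z)$, where $P(z) = \prod_{i=0}^{\infty} c_0(z^{k^i})$ is the proposed inverse factor appearing in the statement, and then to verify by direct substitution that $g$ satisfies a Mahler equation with polynomial coefficients.

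First I would check that $P(z)$ is a well-defined element of $\ck[[z]]$ and is invertible. Since $c_0(0) = 1$, writing $c_0(z) = 1 + (\text{terms of degree} \geq m)$ for some $m \geq 1$, the factor $c_0(z^{k^i})$ differs from $1$ only in degrees $\geq m k^i$; hence for any fixed $N$ only finitely many factors contribute to the coefficient of $z^N$, so the infinite product converges in the $z$-adic topology to a power series with constant term $1$, and in particular $P(z)$ is a unit in $\ck[[z]]$. The key structural property is the functional equation $P(z) = c_0(z) P(z^k)$, obtained by splitting off the $i = 0$ factor and reindexing. Iterating it gives the telescoped identity $P(z) = \left( \prod_{j=0}^{i-1} c_0(z^{k^j}) \right) P(z^{k^i})$ for every $i$, equivalently $P(z^{k^i})^{-1} = P(z)^{-1} \prod_{j=0}^{i-1} c_0(z^{k^j})$.

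Next I would set $g(z) = P(z) f(z) \in \ck[[z]]$, so that $f(z^{k^i}) = P(z^{k^i})^{-1} g(z^{k^i})$. Substituting into the given Mahler equation and using the telescoped identity, each factor $P(z^{k^i})^{-1}$ becomes $P(z)^{-1} \prod_{j=0}^{i-1} c_0(z^{k^j})$; multiplying the whole equation by the single power series $P(z)$ then clears every denominator at once and yields $\sum_{i=0}^{n} \tilde c_i(z) g(z^{k^i}) = 0$ with $\tilde c_i(z) = c_i(z) \prod_{j=0}^{i-1} c_0(z^{k^j})$, the empty product for $i = 0$ being $1$. Each $\tilde c_i$ is a product of polynomials, hence a polynomial, so $g$ satisfies a Mahler equation with polynomial coefficients and is therefore Becker. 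Finally $f(z) = P(z)^{-1} g(z)$ is exactly the claimed factorization.

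I do not expect a genuine obstacle here, since the argument is essentially formal. The only points requiring care are the convergence and invertibility of the infinite product $P(z)$, handled by the $z$-adic estimate above, and the bookkeeping in the telescoping step, where one must track the empty product at $i = 0$ and confirm that multiplication by the single factor $P(z)$ simultaneously clears all the distinct denominators $P(z^{k^i})$.
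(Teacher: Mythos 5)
Your construction is the right one, and it is essentially the standard argument for this lemma (note that the paper itself gives no proof; it quotes the statement as a modified form of Dumas's Theorem 31). The $z$-adic convergence and invertibility of $P(z)=\prod_{i=0}^{\infty}c_0\left(z^{k^i}\right)$, the identity $P(z)=c_0(z)P\left(z^{k}\right)$ with its telescoped form, and the substitution $g=Pf$ are all correct and do yield
\begin{equation*}
	\sum_{i=0}^{n} c_i(z)\left(\prod_{j=0}^{i-1}c_0\left(z^{k^j}\right)\right)g\left(z^{k^i}\right)=0 .
\end{equation*}

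However, your final inference --- ``each $\tilde c_i$ is a polynomial, so $g$ satisfies a Mahler equation with polynomial coefficients and is therefore Becker'' --- is a genuine gap. In this paper (following Adamczewski--Bell), $k$-Becker means that $g$ satisfies an equation in the normalized form $g(z)=\sum_{i=1}^{n}d_i(z)\,g\left(z^{k^i}\right)$ with the $d_i$ polynomials; the coefficient of $g(z)$ itself must be $1$. A homogeneous equation $\sum_{i=0}^{n}\tilde c_i(z)\,g\left(z^{k^i}\right)=0$ with polynomial coefficients certifies nothing beyond Mahlerness: every $k$-Mahler series satisfies one (just clear denominators), so by your criterion $f$ itself would already be Becker and the lemma would be vacuous. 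Your equation has $\tilde c_0=c_0$, which is not $1$ in general. Fortunately the gap closes in one line with what you already have: for $i\geq 1$ the product $\prod_{j=0}^{i-1}c_0\left(z^{k^j}\right)$ contains the factor $c_0(z)$ (its $j=0$ term), so $c_0(z)$ divides every $\tilde c_i$, and dividing the whole equation by $c_0(z)$ gives
\begin{equation*}
	g(z)=-\sum_{i=1}^{n}c_i(z)\left(\prod_{j=1}^{i-1}c_0\left(z^{k^j}\right)\right)g\left(z^{k^i}\right),
\end{equation*}
which has polynomial coefficients and leading coefficient $1$, so $g$ is indeed $k$-Becker. Equivalently, you can organize the computation so the issue never arises: write $g(z)=P\left(z^{k}\right)\cdot c_0(z)f(z)$, replace $c_0(z)f(z)$ by $-\sum_{i=1}^{n}c_i(z)f\left(z^{k^i}\right)$ using the hypothesis, and then telescope $P\left(z^{k}\right)$ against each $f\left(z^{k^i}\right)$; the coefficient $1$ in front of $g(z)$ then comes out automatically.
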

			Note that $h\left( z \right) = \left( \prod_{i = 0}^\infty c_0\left( z^{k^i}
			\right)\right)^{-1}$ is Mahler since it satisfies the Mahler equation
			\begin{equation*}
				h\left( z \right) = \frac{1}{c_0\left( z \right)} h\left( z^k \right).
			\end{equation*}

			This again shows that regular and Mahler power series are quite close.
		\subsection{Plan of this work}
			While the results of sections \ref{subs:valuations} and
			\ref{subs:mahlerOperators} are well-known, most of the other definitions and
			results are new. The methods follow essentially Dumas \cite[Proposition
			54]{Dumas93}, but using the language of valuations and having some patience
			for computations enables us to extend the results to more general situations.

			In section \ref{sect:definitions} we introduce the basic notions used in this paper.
			Section \ref{sect:orderOne} describes the special case of Mahler equations of order one, in particular
			a full characterisation of their regularity. Section \ref{sect:calmnesses} contains one of our main
			results (Theorem \ref{thm:precalmness}) which gives a numerical criterion for precalmness, a property of
			coefficents in Mahler equations related to regularity. Sections \ref{sect:conjectures} and \ref{sect:naive}
			explore various forms of Becker's conjecture. The former shows the equivalence of some forms, while the latter states
			a naive version and constructs a counterexample, which also illustrates how to apply Proposition \ref{prop:technicalFullEquivalence}
			from a computational point of view. In section \ref{sect:criteria} we state and prove equivalent conditions
			for regularity for a subclass of Mahler power series.
		\subsection*{Acknowledgements}
			This work was originally a master's thesis written under the supervision of dr. Jakub Byszewski,
			to whom I would like to express my gratitude for all the encouragement and mathematical support.
	\section{Preliminary definitions}\label{sect:definitions}
	 This section contains most of the definitions and some basic lemmas used in
		later proofs. We will work with a fixed algebraically closed field $\ck$ of
		characteristic $\kar\ck = p$ or $0$ and with a fixed integer $k \geq 2$.
		\emph{If $\kar\ck = p > 0$ we always assume that $k$ is coprime to $p$.}
		\subsection{Valuations in the field of rational functions}\label{subs:valuations}
		 We will be using the language of valuations to talk about poles and zeroes of
			functions. We will denote by $v_{\alpha}$ the $\left( z - \alpha	\right)$-adic valuation on $\ck\left( z \right)$.

			We will be using mostly basic properties of valuations, but we will actually need a more precise tool for determining the valuation of a sum of
			two rational functions, involving their $\left( z - \alpha \right)$-adic digits. We will denote the $\left( z - \alpha \right)$-adic digit of a rational
			function $c\left( z \right) \in \ck\left( z \right)$ at $\left( z - \alpha \right)^i$ by $\dig_{\alpha, i}\left( c\left( z \right) \right)$.
			The valuation of such a rational function is equal to the lowest
			index for which its $\left( z - \alpha \right)$-adic digit is
			not zero. To get the
			information necessary to use the above fact in our
			proofs, we will need a simple lemma.
			\begin{lemma}\label{lem:arbitraryDigits}
				Let $c\left( z \right), d\left( z \right) \in \ck\left( z \right)$ be two
				rational functions, $\alpha_0, \dots, \alpha_n \in \ck$ be pairwise distinct
				numbers and $m_0, \dots, m_n \in \Z$ be integers, $m_i \geq
				v_{\alpha_i}\left( c\left( z \right) \right)$. If $v_{\alpha_i}\left(
				d\left( z \right) \right) \geq v_{\alpha_i}\left( c\left( z \right) \right)$
				for every $0 \leq i \leq n$, then there exists a polynomial $h\left( z
				\right) \in \ck\left[ z \right]$ such that $\dig_{\alpha_i, j}\left( h\left(
				z \right)c\left( z \right)\right) = \dig_{\alpha_i, j}\left( d\left( z
				\right)	\right)$ for all $0 \leq i \leq n$ and $v_{\alpha_i}\left( c\left(
				z \right)	\right) \leq j \leq m_i$.
			\end{lemma}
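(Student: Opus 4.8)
The plan is to reformulate the statement about matching finitely many digits as a single valuation inequality at each $\alpha_i$, and then to solve the resulting interpolation problem with the Chinese Remainder Theorem.

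First I would set $v_i = v_{\alpha_i}\left( c\left( z \right) \right)$ and observe that, since $h\left( z \right)$ is a polynomial, $v_{\alpha_i}\left( h\left( z \right) c\left( z \right) \right) \geq v_i$, while the hypothesis gives $v_{\alpha_i}\left( d\left( z \right) \right) \geq v_i$ as well. Because the digit map is additive, the condition $\dig_{\alpha_i, j}\left( h\left( z \right) c\left( z \right) \right) = \dig_{\alpha_i, j}\left( d\left( z \right) \right)$ for all $v_i \leq j \leq m_i$ is equivalent to $\dig_{\alpha_i, j}\left( h\left( z \right) c\left( z \right) - d\left( z \right) \right) = 0$ for all $j \leq m_i$ (the digits below $v_i$ vanish automatically for both terms), which is in turn equivalent to the single inequality $v_{\alpha_i}\left( h\left( z \right) c\left( z \right) - d\left( z \right) \right) \geq m_i + 1$. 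Thus it suffices to produce one polynomial $h$ satisfying this inequality simultaneously for $0 \leq i \leq n$.

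Next I would divide by $c$. Writing $N_i = m_i - v_i \geq 0$ and $\phi\left( z \right) = d\left( z \right)/c\left( z \right)$, the inequality above reads $v_{\alpha_i}\left( \phi\left( z \right) - h\left( z \right) \right) \geq N_i + 1$. The key point is that $v_{\alpha_i}\left( \phi\left( z \right) \right) = v_{\alpha_i}\left( d\left( z \right) \right) - v_i \geq 0$, so although $c$ itself may have a pole at $\alpha_i$, the quotient $\phi$ is regular at each $\alpha_i$ and therefore has a well-defined image $T_i$ in $\ck\left[ z \right]/\left( \left( z - \alpha_i \right)^{N_i + 1} \right)$, namely its truncated Taylor expansion. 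The desired condition becomes precisely the congruence $h \equiv T_i \pmod{\left( z - \alpha_i \right)^{N_i + 1}}$.

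Finally, since the $\alpha_i$ are pairwise distinct, the ideals generated by $\left( z - \alpha_i \right)^{N_i + 1}$ are pairwise coprime, and the Chinese Remainder Theorem (equivalently, Hermite interpolation) furnishes a polynomial $h\left( z \right) \in \ck\left[ z \right]$ realizing all the prescribed residues $T_i$ at once. There is no serious obstacle here; the only subtlety worth highlighting is the treatment of poles of $c$. The reduction goes through precisely because the hypothesis $v_{\alpha_i}\left( d\left( z \right) \right) \geq v_{\alpha_i}\left( c\left( z \right) \right)$ guarantees that $d/c$ has no pole at $\alpha_i$, so that a genuine polynomial, rather than merely a rational function, can be made to match it to the required order; this is exactly where the assumption is used.
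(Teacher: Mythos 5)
Your proof is correct and takes essentially the same route as the paper: the paper also reduces the statement to approximating the single function $\frac{d\left( z \right)}{c\left( z \right)}$ --- which is regular at each $\alpha_i$ precisely because $v_{\alpha_i}\left( d \right) \geq v_{\alpha_i}\left( c \right)$ --- by a polynomial simultaneously at the finitely many points $\alpha_i$, invoking Serre's Approximation Lemma where you invoke the Chinese Remainder Theorem (the same tool in this setting). In fact your threshold $v_{\alpha_i}\left( h - \frac{d}{c} \right) \geq m_i - v_{\alpha_i}\left( c \right) + 1$ is the careful one; the paper's stated bound $m_i - v_{\alpha_i}\left( c \right)$ is off by one and would only match digits up to $m_i - 1$.
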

			This follows from the Approximation Lemma from \cite[p. 12]{Serre79} applied
			to the single function $\frac{d\left( z \right)}{c\left( z \right)}$, the
			prime ideals generated by $\left( z - \alpha_i \right)$ and integers $m_i$.
			In fact, it is enough to choose $h\left( z \right)$ so that
			\begin{equation*}
				v_{\alpha_i}\left( h\left( z \right) - \frac{d\left( z \right)}{c\left( z
				\right)} \right) \geq m_i - v_{\alpha_i}\left( c\left( z \right) \right).
			\end{equation*}

		 There is one other simple property of valuations we will use.
			\begin{lemma}\label{lem:valuationUnderMahler}
				Let $i \in \N$ be an integer coprime to $p = \kar\ck$. Assume $\alpha \neq
				0$. Then
				\begin{equation*}
					v_{\alpha}\left( c\left( z^i \right) \right) = v_{\alpha^i}\left( c\left(
					z \right) \right).
				\end{equation*}
				\begin{proof}
					For any $v \in \N$ the polynomial $\left( \frac{\left( \alpha^i -
					z^i \right)}{\left( \alpha - z \right)} \right)^{v}$ has no zeroes at $\alpha$, by the
					assumption of $i$ being coprime to the characteristic of $\ck$.
					Thus, the function $\left( \alpha - z
					\right)^{v} c\left( z^i \right)$ has no poles at $\alpha$ if and only if
					it has no poles at $\alpha$ after being multiplied by this polynomial.
					But this
					means that $d\left( z \right) = \left( \alpha^i - z^i \right)^{v} c\left(
					z^i \right)$ also has no poles at $\alpha$. Substituting the variable
					$t = z^i$ into $\left( \alpha^i - z \right)^{v} c\left( z \right)$, we get
					$d\left( t \right)$, so the former expression has no poles at $\alpha^i$ if and only
					if $d\left( t \right)$ has no poles at $\alpha$.
				\end{proof}
			\end{lemma}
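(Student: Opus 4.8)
The plan is to reduce the identity to the case of a single linear factor and then read off the valuation by computing a multiplicity of roots. Since $\ck$ is algebraically closed, any nonzero rational function factors as $c(z) = u \prod_j (z - \beta_j)^{e_j}$ with $u \in \ck^*$, finitely many pairwise distinct $\beta_j \in \ck$, and exponents $e_j \in \Z$. Both $v_\alpha$ and $v_{\alpha^i}$ are group homomorphisms $\ck(z)^* \to \Z$, so each side of the claimed equality is additive over this factorization; it therefore suffices to establish the identity for $c(z) = z - \beta$ with a single $\beta \in \ck$.

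For such a linear $c$ the right-hand side is immediate: $v_{\alpha^i}(z - \beta)$ equals $1$ if $\beta = \alpha^i$ and $0$ otherwise. For the left-hand side I would study the polynomial $c(z^i) = z^i - \beta$ and determine the multiplicity of $\alpha$ as one of its roots. Now $\alpha$ is a root of $z^i - \beta$ exactly when $\alpha^i = \beta$, which reproduces the same dichotomy; the only thing left to verify is that in the positive case this root is \emph{simple}, so that $v_\alpha(z^i - \beta) = 1$ as well.

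This simplicity check is where the hypotheses enter and is the heart of the matter. The derivative of $z^i - \beta$ is $i z^{i-1}$, and I would argue it does not vanish at $\alpha$: the factor $z^{i-1}$ is nonzero there since $\alpha \neq 0$, and the integer $i$ is nonzero as an element of $\ck$ by the coprimality assumption on $i$ and $p = \kar\ck$. Hence $\alpha$ is a simple root, giving $v_\alpha(z^i - \beta) = 1 = v_{\alpha^i}(z - \beta)$, whereas in the remaining case both valuations vanish. Summing over the factorization then yields $v_\alpha(c(z^i)) = \sum_j e_j\, v_\alpha(z^i - \beta_j) = \sum_{j \,:\, \beta_j = \alpha^i} e_j = v_{\alpha^i}(c(z))$, as required.

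I expect the simplicity verification to be the sole real obstacle, precisely because it is here that both $\alpha \neq 0$ and the coprimality of $i$ with $p$ are indispensable: relaxing either hypothesis makes the substitution $z \mapsto z^i$ ramified at $\alpha$, in which case the two valuations differ by the ramification index instead of agreeing.
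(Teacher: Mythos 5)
Your proof is correct, but it follows a genuinely different route from the paper's. You exploit the global assumption that $\ck$ is algebraically closed to factor $c(z) = u\prod_j (z-\beta_j)^{e_j}$, reduce by additivity of valuations to the single linear factor $z - \beta$, and then settle the key point --- that $\alpha$ is a \emph{simple} root of $z^i - \beta$ when $\alpha^i = \beta$ --- by the derivative criterion, where $i\alpha^{i-1} \neq 0$ uses exactly the hypotheses $\gcd(i,p)=1$ and $\alpha \neq 0$. The paper instead never factors $c$: it clears the potential pole by multiplying $(\alpha-z)^v c(z^i)$ by the polynomial $\left(\frac{\alpha^i - z^i}{\alpha - z}\right)^{v}$ (which is nonvanishing at $\alpha$ by the same two hypotheses), and then performs the substitution $t = z^i$ to transport the ``no pole at $\alpha$'' condition into a ``no pole at $\alpha^i$'' condition, comparing the two valuations through these equivalences rather than computing either one. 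Your argument is more explicit about what is really going on --- the map $z \mapsto z^i$ is unramified at $\alpha$ under the stated hypotheses --- and your closing remark correctly identifies that dropping either hypothesis makes the valuations differ by the ramification index, which the paper's formulation obscures. What the paper's approach buys is independence from the factorization: it manipulates only pole conditions of $c$ itself, which is in the spirit of the valuation-theoretic language used throughout the rest of the paper, though as a trade-off it verifies inequalities $v \geq -v_0$ rather than directly computing both sides as you do.
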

		\subsection{Calm numbers, calm functions and calm sequences of functions}
			Due to the somewhat complex criteria for poles in Theorem
			\ref{thm:DumasReguralityCriterion}, we set up some nomenclature for numbers
			and functions satisfying these criteria.
			\begin{definition}
				A number $\alpha \in \ck$ is called \emph{$k$-calm} if $\alpha$ is a root of unity
				of order not coprime to $k$ or $\alpha = 0$. If a number is not $k$-calm, we
				call it \emph{$k$-anxious}.
			\end{definition}

			\begin{definition}
				Let $\alpha$ be a $k$-anxious number. A rational function $c\left( z \right)
				\in \ck\left( z \right)$ is called \emph{$\alpha$-calm} if it has no poles
				at $\alpha$.
			\end{definition}

			\begin{definition}\label{def:singleCalm}
				A rational function $c\left( z \right) \in \ck\left( z \right)$ is called
				\emph{$k$-calm} if it has poles only at $k$-calm numbers (\ie the function
				is $\alpha$-calm for all $k$-anxious $\alpha$).
			\end{definition}

			Since we will be working with sequences of rational functions rather than a
			single function, we extend these definitions as follows.
			\begin{definition}\label{def:sequenceCalm}
				Let $\alpha$ be a $k$-anxious number. A finite sequence of rational
				functions $\left( c_i\left( z \right) \right)_{i = 1}^n$, $c_i\left( z
				\right) \in \ck\left( z \right)$ is called
				$\alpha$-calm (\resp $k$-calm) if all the functions $c_i\left( z \right)$ are
				$\alpha$-calm (\resp $k$-calm).
			\end{definition}
			We will omit the $k$-
			prefixes in the notation whenever it is clear from context.
			In this language Theorem \ref{thm:DumasReguralityCriterion} can be stated as
			follows -- if a power series satisfies a Mahler equation with a calm
			sequence of coefficients, then it is regular.

			For the discussion of the Becker conjecture we will also need some further
			definitions.
			\begin{definition}
				We define the action of the group $\ck\left( z \right)^*$ on the set of
				sequences $\ck\left( z \right)^{n}$.
				Let $\left( c_i\left( z \right) \right)_{i = 1}^n$, $c_i\left( z
				\right) \in \ck\left( z \right)$ be a finite sequence of rational functions
				and $0 \neq h\left( z \right) \in \ck\left( z \right)$ be a nonzero rational function. We
				define the action of the rational function $h\left( z \right)$ on the
				sequence $\left( c_i\left( z \right) \right)_{i = 1}^n$ by the formula
				\begin{equation*}
					h^*\left( c_i\left( z \right) \right) = \left( \frac{h\left( z^{k^i} \right)}{h\left( z
					\right)} c_i\left( z \right) \right).
				\end{equation*}
			\end{definition}

			\begin{definition}
				Let $\alpha$ be a $k$-anxious number. A finite sequence of rational
				functions $\left( c_i(z) \right)_{i = 1}^{n}$, $c_i\left( z \right) \in
				\ck\left( z \right)$ is called \emph{$\alpha$-precalm} if there exists a
				polynomial $0 \neq h\left( z \right) \in \ck\left[ z \right]$ such that $h^*\left( c_i\left(
				z \right) \right)$ is $\alpha$-calm.
			\end{definition}

			\begin{definition}
				A finite sequence of rational functions $\left( c_i(z) \right)_{i =
				1}^{n}$, $c_i\left( z \right) \in \ck\left( z \right)$ is called
				\emph{$k$-precalm} if there exists a polynomial $0 \neq h\left( z
				\right) \in \ck\left[ z
				\right]$ such that $h^*\left( c_i\left( z \right) \right)$ is $k$-calm.
			\end{definition}
			It will be obvious from the proof of Theorem \ref{thm:precalmness} that being
			$k$-precalm is equivalent to being $\alpha$-precalm for all $k$-anxious
			$\alpha$, but this is not completely trivial, so we only prove this later.

			To prove conditions equivalent to precalmness we will also need the
			following quantitative measure of how far a function is from being calm.
			\begin{definition}\label{def:calmness}
				For a sequence $\left( a_i \right)$ we will write $\overline{a}_i$ for its
				associated sequence of partial sums $\overline{a}_i = \sum_{j = 0}^{i-1}
				a_j$, with $\overline{a}_0 = 0$. For any $k$-anxious number
				$\alpha$ put
				\begin{equation*}
					A_\alpha =
					\begin{cases}
						\left\{ (a_i)_{i = 0}^{\infty} \colon a_i \in \left\{ 1,\dots,n \right\} \right\}
						& \textrm{ if } \alpha \textrm{ is not a root of unity,}\\
						\left\{ (a_i)_{i = 0}^{\mu-1} \colon a_i \in \left\{ 1,\dots,n \right\},
						\alpha^{k^{\overline{a}_\mu}} = \alpha \right\}
						& \textrm{ if } \alpha \textrm{ is a root of unity.}
					\end{cases}
				\end{equation*}
				The \emph{$k$-calmness} of a finite sequence of rational functions $\left(
				c_i(z) \right)_{i = 1}^{n}$, $c_i\left( z \right) \in \ck\left( z \right)$
				with respect to a $k$-anxious number $\alpha$ and a sequence
				$(a_i) \in A_\alpha$ is
				\begin{equation*}
					\clm_{\alpha, (a_i)} \left( c_i \right) = \sum_i
					v_{\alpha^{k^{\overline{a}_i}}}\left( c_{a_i}\left( z \right) \right).
				\end{equation*}
				The sum is taken over the set $\left\{ 0, \dots, \mu - 1 \right\}$ for
				$\alpha$ a root of unity and over all natural numbers if it is not a root of
				unity. For simplicity we will usually omit $\alpha$ from the notation.

				It is worth noting that this sum is always either finite or contains an
				infinite term. For a root of unity $\alpha$ this is obvious since the sum has
				finitely many terms. If $\alpha$ is not a root of unity then any nonzero
				rational function has only finitely many poles and zeroes, so the sum has
				only finitely many nonzero terms or contains at least one infinite term
				(which can happen only when some $c_{a_i}\left( z \right)$ is zero).
			\end{definition}
			We will also use the connection between calmness and the action of rational
			functions on the set of sequences.
			\begin{lemma}\label{lem:actionEffect}
				Let $\alpha$ be an anxious number, $(a_i)\in A_\alpha$ and $0 \neq h\left( z
				\right) \in \ck\left( z \right)$ be a nonzero rational function. Then
				\begin{equation*}
					\clm_{(a_i)} h^*(c_i) =
					\begin{cases}
						\clm_{\left( a_i \right)} (c_i) - v_\alpha\left( h\left( z \right) \right)
						& \textrm{ if } \alpha \textrm{ is not a root of unity,}\\
						\clm_{\left( a_i \right)} (c_i)
						& \textrm{ if } \alpha \textrm{ is a root of unity.}
					\end{cases}
				\end{equation*}
				\begin{proof}
					\begin{multline*}
						\clm_{(a_i)} h^*(c_i) = \sum_i v_{\alpha^{k^{\overline{a}_i}}}\left(
							\frac{h\left( z^{k^{a_i}} \right)}{h\left( z \right)} c_{a_i}\left( z
							\right) \right) \\
						\begin{split}
							&= \sum_i \left( v_{\alpha^{k^{\overline{a}_i}}}\left( c_{a_i}\left(
							z \right) \right) + v_{\alpha^{k^{\overline{a}_i}}}\left( h\left( z^{k^{a_i}}
							\right) \right) - v_{\alpha^{k^{\overline{a}_i}}}\left( h\left( z
							\right) \right) \right) \\
							&= \sum_i v_{\alpha^{k^{\overline{a}_i}}}\left(
							c_{a_i}\left( z \right) \right) + \sum_i \left(
							v_{\alpha^{k^{\overline{a}_{i+1}}}}\left(	h\left( z \right) \right) -
							v_{\alpha^{k^{\overline{a}_i}}}\left( h\left( z	\right) \right) \right) \\ &=
							\begin{cases}
								\clm_{(a_i)} (c_i) - v_\alpha\left( h\left( z \right) \right)
								& \textrm{ if } \alpha \textrm{ is not a root of unity,}\\
								\clm_{(a_i)} (c_i) - v_\alpha\left( h\left( z \right) \right) +
								v_{\alpha^{k^{\overline{a}_\mu}}} \left( h\left( z \right) \right) =
								\clm_{(a_i)} (c_i)
								& \textrm{ if } \alpha \textrm{ is a root of unity.}
							\end{cases}
						\end{split}
					\end{multline*}
					In the second line, we used Lemma
					\ref{lem:valuationUnderMahler} and the definition of $\overline{a}_i$,
					while the last equality for roots of unity holds by the definition
					of $A_\alpha$.
				\end{proof}
			\end{lemma}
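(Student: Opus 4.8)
The plan is to compute $\clm_{(a_i)} h^*(c_i)$ straight from the definitions and reduce the whole expression to a telescoping sum. First I would expand
\begin{equation*}
	\clm_{(a_i)} h^*(c_i) = \sum_i v_{\alpha^{k^{\overline{a}_i}}}\left( \frac{h\left( z^{k^{a_i}} \right)}{h(z)} c_{a_i}(z) \right)
\end{equation*}
and use additivity of the valuation over products and quotients to break this into three sums: one gathering the $c_{a_i}$ factors, one the $h\left( z^{k^{a_i}} \right)$ factors, and one the $h(z)$ factors. The first sum is by definition exactly $\clm_{(a_i)}(c_i)$, so everything reduces to understanding the combined contribution of the two $h$-terms.

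Next I would simplify the $h\left( z^{k^{a_i}} \right)$ term. Since $k$ is coprime to $p = \kar\ck$ (the standing assumption), the exponent $k^{a_i}$ is coprime to $p$, and $\alpha^{k^{\overline{a}_i}} \neq 0$ because $\alpha$ is anxious and hence nonzero. Lemma \ref{lem:valuationUnderMahler} therefore applies and yields
\begin{equation*}
	v_{\alpha^{k^{\overline{a}_i}}}\left( h\left( z^{k^{a_i}} \right) \right) = v_{(\alpha^{k^{\overline{a}_i}})^{k^{a_i}}}(h(z)) = v_{\alpha^{k^{\overline{a}_{i+1}}}}(h(z)),
\end{equation*}
where the last step uses $\overline{a}_{i+1} = \overline{a}_i + a_i$. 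Writing $b_i = v_{\alpha^{k^{\overline{a}_i}}}(h(z))$, the two $h$-contributions collapse to a telescoping sum, so that $\clm_{(a_i)} h^*(c_i) = \clm_{(a_i)}(c_i) + \sum_i (b_{i+1} - b_i)$.

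It then remains only to evaluate this telescoping sum in the two cases. When $\alpha$ is a root of unity the index $i$ runs over $\{0, \dots, \mu - 1\}$ and the sum telescopes to $b_\mu - b_0 = v_{\alpha^{k^{\overline{a}_\mu}}}(h(z)) - v_\alpha(h(z))$; the defining condition $\alpha^{k^{\overline{a}_\mu}} = \alpha$ of $A_\alpha$ makes these two terms equal, so the sum vanishes and $\clm_{(a_i)} h^*(c_i) = \clm_{(a_i)}(c_i)$. When $\alpha$ is not a root of unity the index runs over all of $\N$, the $N$-th partial sum equals $b_{N+1} - b_0$ with $b_0 = v_\alpha(h(z))$, and the one genuinely non-formal point---the main, if mild, obstacle---is to justify that $b_{N+1} \to 0$. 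This I would argue as follows: each $a_j \geq 1$, so the partial sums $\overline{a}_i$ strictly increase; since $\alpha$ is not a root of unity the points $\alpha^{k^{\overline{a}_i}}$ are pairwise distinct; and as $h$ is a nonzero rational function it has only finitely many zeros and poles, whence $b_i = 0$ for all sufficiently large $i$. The telescoping sum thus equals $-b_0 = -v_\alpha(h(z))$, giving $\clm_{(a_i)} h^*(c_i) = \clm_{(a_i)}(c_i) - v_\alpha(h(z))$, as claimed.
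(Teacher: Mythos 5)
Your proposal is correct and follows essentially the same route as the paper's proof: the same splitting of the valuation of the product, the same application of Lemma \ref{lem:valuationUnderMahler} to rewrite $v_{\alpha^{k^{\overline{a}_i}}}\bigl( h\bigl( z^{k^{a_i}} \bigr) \bigr)$ as $v_{\alpha^{k^{\overline{a}_{i+1}}}}\left( h(z) \right)$, and the same telescoping, with the two cases resolved by the defining condition of $A_\alpha$. The only difference is that you explicitly justify the vanishing of the tail $b_i$ in the non-root-of-unity case (pairwise distinctness of the points $\alpha^{k^{\overline{a}_i}}$ plus finiteness of the zero/pole set of $h$), a point the paper leaves implicit.
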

			We will prove more results connected to precalmness when discussing Becker's
			conjecture. For some of that discussion, we will need one more property of
			calm sequences of functions.
			\begin{lemma}\label{lem:prepolynomial}
				Let $\left( c_i\left( z \right) \right)_{i = 1}^n$ be a $k$-calm sequence of
				rational functions. Then there exists a polynomial $0 \neq h\left( z \right) \in
				\ck\left[ z \right]$ such that $h^*\left( c_i\left( z \right) \right)$ is a
				sequence of polynomials.
				\begin{proof}
					We will first construct polynomials $h_i'\left( z \right)$ for $1 \leq i \leq
					n$ such that
					\begin{enumerate}[(a)]
						\item\label{it:addNoPoles} $\frac{h_i'\left( z^{k^j} \right)}{h_i'\left(
							z \right)}$ is a polynomial for every $1 \leq j \leq n$.
						\item\label{it:polynomialize} $\frac{h_i'\left( z^{k^i} \right)}{h_i'\left(
							z \right)}c_i'\left( z \right)$ is a polynomial.
					\end{enumerate}
					It then remains to put $h\left( z \right) = \prod_{i = 1}^n h_i'\left(
					z \right)$. Property \pointref{it:addNoPoles} ensures that 
					$h\left( z \right)$ will not add any poles to the terms of the new sequence
					and property \pointref{it:polynomialize} shows that $h\left( z
					\right)$ will eliminate any poles that $c_i\left( z	\right)$ had before.

					Let us fix $1 \leq i \leq n$. We construct $h_i'\left( z \right)$. For
					any $k$-calm $\alpha$ consider the polynomial
					\begin{equation*}
						H_\alpha\left( z \right) = \begin{cases}
							z & \textrm{ if } \alpha = 0,\\
							z^{\frac{b}{q}} - 1 &
							\textrm{ if } \alpha = \zeta_b \textrm{ is a } b^{\mathrm{th}} \textrm{
							primitive root of unity with } q = \gcd\left( k, b \right) \neq 0.
						\end{cases}
					\end{equation*}
					Set
					\begin{equation*}
						h_i'\left( z \right) = \prod_{\alpha\colon v_\alpha\left( c_i' \right) < 0} H_\alpha\left( z
						\right)^{-v_\alpha\left( c_i' \right)}.
					\end{equation*}
					It remains to prove that this function has the desired properties. Property
					\pointref{it:addNoPoles} can be checked for every $H_\alpha$ separately, since if
					two functions satisfy property \pointref{it:addNoPoles}, so does their
					product. For $\alpha = 0$ the property is obvious since $\frac{H_0\left(
						z^{k^j} \right)}{H_0\left( z \right)} = z^{k^j - 1} \in \ck\left[ z
						\right]$. For $\alpha = \zeta_b$ we have
					\begin{equation*}
						\frac{H_\alpha\left( z^{k^j} \right)}{H_\alpha\left( z \right)} =
						\frac{z^{k^j \frac{b}{q}} - 1}{z^{\frac{b}{q}} - 1} = \frac{\left(
							z^{\frac{b}{q}} \right)^{k^j} - 1}{\left( z^{\frac{b}{q}} \right) - 1} =
							\sum_{c = 0}^{k^j - 1} \left( z^{\frac{b}{q}} \right)^c,
					\end{equation*}
					which is indeed a polynomial.

					To prove that property \pointref{it:polynomialize} is satisfied, it suffices to
					show that every pole of $c_i'$ counted with multiplicity corresponds to a
					root of $\frac{h_i'\left( z^{k^i} \right)}{h_i'\left( z
					\right)}$ of at least the same multiplicity. For every such pole $\alpha$,
				 the polynomial $h_i$ has a corresponding factor $H_\alpha$ and so it
					suffices to show that $\frac{H_\alpha\left( z^{k^i} \right)}{H_\alpha\left(
					z \right)}$ has a root at $\alpha$. For $\alpha = 0$ this is obvious. For
					$\alpha = \zeta_b$ we have
					\begin{equation*}
						\left( \zeta_b \right)^{k^i\frac{b}{q}} - 1 = \left( \zeta_b^b
						\right)^{\frac{k^i}{q}} - 1 = 0.
					\end{equation*}
					However, $\zeta_b$ is not a root of $z^{\frac{b}{a}} - 1$, so
					$\frac{H_\alpha\left( z^{k^i} \right)}{H_\alpha\left( z \right)} =
					\frac{z^{k^j \frac{b}{q}} - 1}{z^{\frac{b}{q}} - 1}$ has a root at
					$\zeta_b$.
				\end{proof}
			\end{lemma}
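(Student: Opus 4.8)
The plan is to construct $h$ as a product of elementary building blocks $H_\alpha$, one block for each calm pole $\alpha$ of each $c_i$, chosen so that passing to $h^*$ cancels that pole while creating no new poles anywhere. First I would isolate the reduction: it suffices to produce, for each fixed $i$, a polynomial $h_i'(z)$ with two properties, namely (a) that $\frac{h_i'(z^{k^j})}{h_i'(z)}$ is a polynomial for every $1 \le j \le n$, so that forming $h^*$ with $h = \prod_i h_i'$ never introduces a new pole, and (b) that $\frac{h_i'(z^{k^i})}{h_i'(z)}$ vanishes at each pole $\alpha$ of $c_i$ to order at least $-v_\alpha(c_i)$, so that it kills the pole of $c_i$ there. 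Given such factors, writing $\frac{h(z^{k^i})}{h(z)} = \prod_{j=1}^n \frac{h_j'(z^{k^i})}{h_j'(z)}$, property (a) makes every cross term with $j \neq i$ a polynomial that adds nothing, while the $j = i$ term supplied by (b) clears all poles of $c_i$; hence each $h^*(c_i)$ is a polynomial.

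Next I would build $h_i'$ from blocks attached to the two kinds of calm number. For $\alpha = 0$ take the block $H_0(z) = z$: then $\frac{z^{k^j}}{z} = z^{k^j-1}$ is a polynomial vanishing at $0$. For a primitive $b$-th root of unity $\zeta_b$ with $q = \gcd(k,b) > 1$ take $H_{\zeta_b}(z) = z^{b/q} - 1$; writing $w = z^{b/q}$, the ratio $\frac{w^{k^j}-1}{w-1} = \sum_{c=0}^{k^j-1} w^c$ is a polynomial, giving (a), and at $z = \zeta_b$ the numerator $\zeta_b^{k^i b/q} - 1 = (\zeta_b^b)^{k^i/q} - 1 = 0$ (using $q \mid k^i$) while the denominator $\zeta_b^{b/q} - 1 \neq 0$ (since $b/q < b$), so the ratio vanishes at $\zeta_b$, giving the order-one case of (b). I would then set $h_i'(z) = \prod_{\alpha \colon v_\alpha(c_i) < 0} H_\alpha(z)^{-v_\alpha(c_i)}$, so that raising $H_\alpha$ to the power $-v_\alpha(c_i)$ multiplies the order of the corresponding zero accordingly.

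I expect the crux to be property (a) for the root-of-unity blocks, which rests on the divisibility $z^{b/q}-1 \mid z^{k^j b/q}-1$; this is exactly where the calmness hypothesis does its work, since $q = \gcd(k,b) > 1$ forces $q \mid k^i$ and hence makes $\zeta_b$ a zero of the shifted numerator (for an anxious $\zeta_b$ one would have $q = 1$ and the construction would necessarily fail). The rest is bookkeeping of multiplicities: one checks that the factor $H_\alpha^{-v_\alpha(c_i)}$ contributes a zero of order at least $-v_\alpha(c_i)$ to $\frac{h_i'(z^{k^i})}{h_i'(z)}$ and that no other block lowers this order, both immediate once each block ratio is known to be a genuine polynomial. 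The standing hypothesis $\gcd(k,p) = 1$ (cf.\ Lemma \ref{lem:valuationUnderMahler}) guarantees that the substitution $z \mapsto z^{k^i}$ is well behaved on valuations at nonzero points, so that no separability issues disturb the zero-counting above.
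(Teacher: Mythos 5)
Your proposal is correct and follows essentially the same route as the paper's own proof: the same reduction to per-index factors $h_i'$ with the two properties (no new poles under any $z \mapsto z^{k^j}$, and cancellation of the poles of $c_i$), the same building blocks $H_0(z) = z$ and $H_{\zeta_b}(z) = z^{b/q} - 1$ with $q = \gcd(k,b) > 1$, and the same verification via the geometric-sum identity and the vanishing of $\zeta_b^{k^i b/q} - 1$ against the nonvanishing of $\zeta_b^{b/q} - 1$. The only refinement worth noting is that your emphasis on the denominator $\zeta_b^{b/q} - 1 \neq 0$ (possible only because $q > 1$) pinpoints where calmness enters more sharply than the parenthetical about $q \mid k^i$, which holds for any $q$ dividing $k$.
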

		\subsection{Ring of Mahler operators}\label{subs:mahlerOperators}
		 We will also use a slightly different approach to Mahler equations. It was
			used extensively by Dumas \cite{Dumas93} in his thesis. For more
			information about the notions defined in this section, we direct the reader
			there.
			\begin{definition}
				We define the operator $\Delta_k \colon \ck\left( \left( z \right) \right)
				\to \ck\left( \left( z \right) \right)$ as
				\begin{equation*}
					\Delta_k f\left( z \right) = f\left( z^k \right)
				\end{equation*}
				for any formal Laurent series $f\left( z \right) \in \ck\left( \left( z
				\right) \right)$.
			\end{definition}

			We regard rational functions $c\left( z \right) \in \ck\left( z
			\right)$ as multiplication by $c\left( z \right)$ operators on $\ck\left( \left( z \right) \right)$.
			\begin{definition}
				We call the (non-commutative) ring $\ck\left( z \right)\left[ \Delta_k
				\right]$
				\emph{the ring of $k$-Mahler operators}. The multiplication in this ring
				corresponds to composition of operators and the multiplication by rational functions together with the rule
				\begin{equation}\label{eq:MahlerOperatorRule}
					\Delta_k c\left( z \right) = c\left( z^k \right)\Delta_k
				\end{equation}
				for any rational function $c\left( z \right) \in \ck\left( z \right)$.
				We regard the ring $\ck\left( z \right)\left[ \Delta_k \right]$ as a subring
				of the ring of $\ck$-linear maps $\ck\left( \left( z \right) \right) \to \ck\left(
				\left( z \right) \right)$ that are continuous in the $z$-adic topology.
			\end{definition}
			\begin{remark}\label{rem:MahlerIsFreeModule}
				The ring $\ck\left( z \right)\left[ \Delta_k \right]$ is a free left
				$\ck\left( z \right)$-module with basis $1, \Delta_k, \Delta_k^2, \dots$.
				\begin{proof}
					Obvious from equation \eqref{eq:MahlerOperatorRule}.
				\end{proof}
			\end{remark}

			This definition allows us to reformulate the definition of a $k$-Mahler power
			series.
			\begin{remark}\label{rem:MahlerOperatorMahlerEquation}
				A formal power series $f\left( z \right) \in \ck\left[ \left[ z
				\right] \right]$ is called $k$-Mahler if there exists an operator $M \in 1 +
				\ck\left( z \right)\left[ \Delta_k \right]\Delta_k$ such that
				\begin{equation*}
					M f(z) = 0.
				\end{equation*}
			\end{remark}
			From the above it is obvious that multiplying $M$ on the left by any element $N \in  1 +
			\ck\left( z \right)\left[ \Delta_k \right]\Delta_k$ gives us a new
			$k$-Mahler equation satisfied by $f\left( z \right)$.

			We introduce a family of operators providing us with another way
			of looking at regular power series. These operators have been studied by
			Dumas \cite{Dumas93} and Becker \cite{Becker92}.
			\begin{definition}
				We define the operator $\Lambda_{k, r} \colon \ck\left[ \left[ z
				\right] \right] \to \ck\left[ \left[ z \right] \right]$ for $0 \leq r < k$
				by the formula
				\begin{equation*}
					\Lambda_{k, r} \left( \sum_{i = 0}^{\infty} a_i z^i \right) = \sum_{i = 0}^{\infty} a_{ki + r} z^i.
				\end{equation*}
				These operators are sometimes called the Cartier operators.
			\end{definition}
			It is quite easy to see that iterated application of the operators
			$\Lambda_{k, r}$ on a power series $f\left( z \right) \in \ck\left[ \left[
			z \right] \right]$ gives us power series corresponding to sequences in the
			$k$-kernel of the sequence associated to $f\left( z \right)$. This proves the
			following lemma, which was first noted by Becker \cite[Lemma 3]{Becker92}.
			\begin{lemma}\label{lem:lambdaRegularity}
				A power series $f\left( z \right) \in \ck\left[ \left[ z \right] \right]$ is
				$k$-regular if and only if the $\ck$-vector space generated by the set
				\begin{equation*}
					\left\{ \Lambda_{k, r_n}\left( \Lambda_{k, r_{n - 1}} \left( \dots \left( \Lambda_{k, r_0}\left( f(z)
					\right) \right) \right) \right) \colon 0 \leq r_i < k, n \in \N \right\}
				\end{equation*}
				is finitely dimensional.
			\end{lemma}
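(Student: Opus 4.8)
The plan is to transport the entire statement along the $\ck$-linear isomorphism $\ck\left[ \left[ z \right] \right] \to \ck^{\N}$ that sends $\sum_{i} a_i z^i$ to the coefficient sequence $\left( a_i \right)$. Under this identification the definition of $k$-regularity for $f\left( z \right)$ is literally the assertion that the span of the $k$-kernel $\left\{ \left( a_{k^e i + r} \right)_i \colon e \in \N, 0 \leq r < k^e \right\}$ is finite-dimensional, so it suffices to match the iterated Cartier images of $f$ with the elements of the $k$-kernel, up to a controllable discrepancy.

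First I would compute the effect of a composition of Cartier operators on $f = \sum_i a_i z^i$. The claim is that
\begin{equation*}
	\Lambda_{k, r_n}\left( \Lambda_{k, r_{n - 1}} \left( \dots \left( \Lambda_{k, r_0}\left( f \right) \right) \right) \right) = \sum_{i = 0}^\infty a_{k^{n+1} i + r}\, z^i, \qquad r = \sum_{j = 0}^n r_j k^j.
\end{equation*}
This I would prove by induction on $n$. The base case $n = 0$ is immediate from the defining formula $\Lambda_{k, r_0}\left( \sum_i a_i z^i \right) = \sum_i a_{ki + r_0} z^i$. For the inductive step I apply $\Lambda_{k, r_n}$ to $\sum_i a_{k^{n} i + r'} z^i$, where $r' = \sum_{j = 0}^{n - 1} r_j k^j$; by definition this replaces the summation index $i$ by $ki + r_n$, producing the coefficient $a_{k^{n}(ki + r_n) + r'} = a_{k^{n+1} i + (k^{n} r_n + r')}$, and $k^{n} r_n + r' = \sum_{j = 0}^{n} r_j k^j = r$, which is exactly the asserted formula. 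The only care needed here is the base-$k$ arithmetic.

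Next I would observe that as $n$ ranges over $\N$ and each $r_j$ ranges over $\left\{ 0, \dots, k - 1 \right\}$, the exponent $e = n + 1$ ranges over all positive integers and, by uniqueness of base-$k$ expansions, the residue $r = \sum_{j = 0}^n r_j k^j$ ranges over exactly $\left\{ 0, 1, \dots, k^{e} - 1 \right\}$. Hence the set of iterated Cartier images of $f$ corresponds, under the isomorphism, precisely to the subfamily $\left\{ \left( a_{k^e i + r} \right)_i \colon e \geq 1, 0 \leq r < k^e \right\}$ of the $k$-kernel.

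The only remaining point is that this subfamily omits the single kernel element indexed by $e = 0$, namely the sequence $\left( a_i \right)$ itself. Writing $V_{\mathrm{ker}}$ for the span of the full $k$-kernel and $V_\Lambda$ for the span of the iterated Cartier images, we then have $V_{\mathrm{ker}} = V_\Lambda + \ck\cdot\left( a_i \right)$, so that $\dim V_\Lambda \leq \dim V_{\mathrm{ker}} \leq \dim V_\Lambda + 1$. In particular one of these spaces is finite-dimensional if and only if the other is, which by the definition of $k$-regularity is exactly the assertion of the lemma. I do not expect any genuine obstacle: the induction in the second step is the heart of the matter, and the $e = 0$ discrepancy handled in the last step is the only subtlety worth flagging.
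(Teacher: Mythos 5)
Your proposal is correct and is exactly the argument the paper has in mind: the paper disposes of this lemma in one sentence by observing that iterated Cartier operators yield precisely the power series corresponding to $k$-kernel sequences (citing Becker), and your induction on the composition together with uniqueness of base-$k$ expansions is the full justification of that observation. Your explicit handling of the $e = 0$ kernel element, which changes the dimension by at most one, is a detail the paper glosses over but which is needed and handled correctly.
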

			We will require a few more simple properties of the Cartier operators.
			\begin{lemma}\label{lem:lambdaProp}
				Let $f(z), g(z) \in \ck\left[ \left[ z \right] \right]$ be power series.
				Then
				\begin{enumerate}[(a)]
					\item\label{lambdaForm}
						\begin{equation*}
							f(z) = \sum_{r = 0}^{k - 1} z^r \Lambda_{k, r}\left( f \right)\left( z^k	\right)
						\end{equation*} and
					\item\label{lambdaProd}
						\begin{equation*}
							\Lambda_{k, r}\left( f(z) g\left( z^k \right) \right) = \Lambda_{k, r}\left( f(z)
							\right) g(z).
						\end{equation*}
				\end{enumerate}
				\begin{proof}
					The operator $\Lambda_{k, r}$ is $\ck$-linear and continuous in the
					$\left( z - \alpha \right)$-adic topology, so it
					suffices to prove the lemma for $f\left( z \right) = z^n$ and $g\left( z
					\right) = z^m$. For part \pointref{lambdaForm} write $n = kt + r_0$, $0 \leq
					r_0 < k$ and note that
					\begin{equation*}
						\Lambda_{k, r}\left( z^n \right) = 
						\begin{cases}
							z^t & r = r_0\\
							0 & r \neq r_0.
						\end{cases}
					\end{equation*}
					The proof reduces to the computation
					\begin{equation*}
						\sum_{r = 0}^{k - 1} z^r \Lambda_{k, r}\left( f \right)\left( z^k	\right)
						= z^{r_0} \left( z^k \right)^t = z^n
					\end{equation*}
					For part \pointref{lambdaProd} note that
					\begin{equation*}
							\Lambda_{k, r_0}\left( f(z) g\left( z^k \right) \right) = \Lambda_{k,
							r_0}\left( z^{n + km} \right) = z^{t + m} = z^t z^m = \Lambda_{k,
							r_0}\left( f\left( z \right) \right) g\left( z \right)
					\end{equation*}
					and
					\begin{equation*}
							\Lambda_{k, r}\left( f(z) g\left( z^k \right) \right) = \Lambda_{k,
							r}\left( z^{n + km} \right) = 0 = \Lambda_{k, r}\left( f\left( z \right)
							\right) g\left( z \right)
					\end{equation*}
					for $r \neq r_0$.
				\end{proof}
			\end{lemma}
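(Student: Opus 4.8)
The plan is to exploit the fact that $\Lambda_{k, r}$ is $\ck$-linear and continuous in the $z$-adic topology, since it is defined coefficientwise. Both identities are therefore $\ck$-linear and continuous in $f$, and part \pointref{lambdaProd} is additionally linear in $g$; so it suffices to verify them on the monomials $f\left( z \right) = z^n$ and $g\left( z \right) = z^m$, which span a topologically dense subspace of $\ck\left[ \left[ z \right] \right]$. This reduction is the organising idea, and everything else is a single explicit computation.

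First I would record the action of $\Lambda_{k, r}$ on one monomial. Writing $n = kt + r_0$ with $0 \leq r_0 < k$, the coefficient extraction in the definition gives $\Lambda_{k, r}\left( z^n \right) = z^t$ when $r = r_0$ and $0$ otherwise, because the equation $ki + r = n$ with $0 \leq r < k$ has the unique solution $i = t$, $r = r_0$. This one fact drives both parts.

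For part \pointref{lambdaForm}, substituting into the right-hand side kills every term except $r = r_0$, leaving $z^{r_0}\left( z^k \right)^t = z^{kt + r_0} = z^n$, which matches $f\left( z \right)$. For part \pointref{lambdaProd}, take $g\left( z \right) = z^m$, so that $f\left( z \right) g\left( z^k \right) = z^{n + km}$ with $n + km = k\left( t + m \right) + r_0$. Then $\Lambda_{k, r}\left( z^{n + km} \right)$ equals $z^{t + m}$ for $r = r_0$ and $0$ otherwise, while $\Lambda_{k, r}\left( z^n \right) g\left( z \right)$ equals $z^t z^m = z^{t + m}$ for $r = r_0$ and $0$ otherwise; the two sides agree in every case.

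I expect no real obstacle here. The only point requiring any care is the reduction to monomials, which rests on checking that all operations in sight — multiplication by a power series, the substitution $z \mapsto z^k$, and the Cartier operators themselves — are continuous in the $z$-adic topology, so that an identity verified on monomials propagates to arbitrary power series by passing to limits. Once that is granted, the proof is purely the bookkeeping of the division $n = kt + r_0$.
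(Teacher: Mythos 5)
Your proposal is correct and follows essentially the same route as the paper: reduce to monomials $f(z) = z^n$, $g(z) = z^m$ by $\ck$-linearity and $z$-adic continuity, record that $\Lambda_{k,r}\left( z^n \right)$ is $z^t$ for $r = r_0$ (where $n = kt + r_0$) and $0$ otherwise, and then verify both identities by the same direct computation. Your added remark that part (b) requires continuity and linearity in $g$ as well, and that the substitution $z \mapsto z^k$ and multiplication are continuous, is a point the paper leaves implicit, but it is the identical argument.
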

			The following lemma will be crucial in proving the non-regularity of certain Mahler
			power series.
			\begin{lemma}\label{lem:polePerserving}
				Let $c\left( z \right) \in \ck\left( z \right)$ be a rational function,
				$0 \neq \alpha \in \ck$ be a nonzero number and let $v = v_{\alpha}\left(
				c\left( z \right) \right)$. Then there exists an $r \in \left\{ 0,
				\dots, k - 1	\right\}$, such that $v_{\alpha}\left(\Lambda_{k, r}\left( c
				\right)\left( z^k \right) \right) \leq v$.
				\begin{proof}
					First we note that multiplying by $z^i$ has no effect on the $\left( z -
					\alpha \right)$-adic valuation of a rational function. The lemma then follows directly from Lemma
					\ref{lem:lambdaProp}.\pointref{lambdaForm}.
				\end{proof}
			\end{lemma}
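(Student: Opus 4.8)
The plan is to deduce the bound directly from the digit-extraction decomposition of Lemma \ref{lem:lambdaProp}.\pointref{lambdaForm} together with the ultrametric (non-archimedean) inequality satisfied by the valuation $v_\alpha$. Concretely, I would apply part \pointref{lambdaForm} to the function $c$ itself and write
\begin{equation*}
	c\left( z \right) = \sum_{r = 0}^{k - 1} z^r \Lambda_{k, r}\left( c \right)\left( z^k \right).
\end{equation*}

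Next I would evaluate $v_\alpha$ on both sides of this identity. Since $\alpha \neq 0$, each monomial $z^r$ is a unit at $\alpha$, so $v_\alpha\left( z^r \right) = 0$ and hence $v_\alpha\left( z^r \Lambda_{k, r}\left( c \right)\left( z^k \right) \right) = v_\alpha\left( \Lambda_{k, r}\left( c \right)\left( z^k \right) \right)$; this is precisely the observation that multiplying by a power of $z$ leaves the $\left( z - \alpha \right)$-adic valuation unchanged. Applying the ultrametric inequality to the finite sum then yields
\begin{equation*}
	v = v_\alpha\left( c\left( z \right) \right) \geq \min_{0 \leq r < k} v_\alpha\left( \Lambda_{k, r}\left( c \right)\left( z^k \right) \right),
\end{equation*}
and choosing an index $r$ that attains this minimum gives $v_\alpha\left( \Lambda_{k, r}\left( c \right)\left( z^k \right) \right) \leq v$, which is exactly the assertion.

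I do not expect any substantial obstacle, since the argument is a one-line consequence of the decomposition and the triangle inequality for valuations. The only point that genuinely requires care is that $\Lambda_{k, r}$ was defined on $\ck\left[ \left[ z \right] \right]$, whereas $c$ is a rational function that may have a pole at the origin, so one must check that the quantities $v_\alpha\left( \Lambda_{k, r}\left( c \right)\left( z^k \right) \right)$ are well defined. To dispatch this I would note that the Cartier operators extend $\ck$-linearly and continuously to the Laurent series field $\ck\left( \left( z \right) \right)$, that part \pointref{lambdaForm} holds verbatim on monomials $z^n$ with $n < 0$ (writing $n = kt + r_0$ with $0 \leq r_0 < k$ and now allowing $t < 0$), and that $\Lambda_{k, r}$ maps rational functions to rational functions, so that every term in the computation stays inside $\ck\left( z \right)$, where $v_\alpha$ is defined.
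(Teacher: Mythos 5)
Your proof is correct and takes essentially the same route as the paper: the paper's own argument likewise combines the decomposition of Lemma \ref{lem:lambdaProp}.\pointref{lambdaForm} with the observation that multiplication by $z^r$ does not affect $v_\alpha$ when $\alpha \neq 0$, leaving the ultrametric inequality implicit where you spell it out. Your closing remark about extending $\Lambda_{k,r}$ to Laurent series so that everything stays in $\ck\left( z \right)$ addresses a point the paper glosses over, and is a welcome clarification rather than a deviation.
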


			As before, we will write $\Lambda_r$ for $\Lambda_{k, r}$ whenever there is
			no risk of confusion.
	\section{Order one Mahler equations}\label{sect:orderOne}
		In order to show the basic principle behind our methods, we start with the case
		of a formal power series $f\left( z \right) \in \ck\left[ \left[ z
		\right] \right]$ satisfying a Mahler equation of order one, that is
		\begin{equation}\label{eq:orderOne}
			f\left( z \right) = c_1\left( z \right)f\left( z^k \right)
		\end{equation}
		for some rational function $c_1\left( z \right) \in \ck\left( z
		\right)$. The results of this section will not be used later, but
		they should help the reader understand the origin of the methods used in the
		following sections. Equations of order one are quite special and the
		criterion for regularity we provide will be a bit simpler than the criteria
		proven later for arbitrary order equations. Furthermore, we do not restrict
		the equations under consideration except for the order, which is not the case
		in later theorems.
		\subsection{Calmness of a single function}
			We will first show how the notion of calmness defined in Definition \ref{def:calmness}
			corresponds to functions being precalm. For simplicity we will say that a
			single function is (pre)calm if the sequence of length one containing only
			this function is (pre)calm. Note that for sequences of length one
			Definition \ref{def:sequenceCalm} reduces to Definition \ref{def:singleCalm},
			so saying that a function is calm is unambiguous. Furthermore, such sequences
			have only one nontrivial calmness for any $\alpha$, which we will henceforth
			denote by $\clm_\alpha = \clm_{\alpha, \left( 1 \right)}$.
			\begin{lemma}\label{lem:precalm}
				Given a rational function $c\left( z \right) \in \ck\left( z \right)$, the
				following conditions are equivalent:
				\begin{enumerate}[(i)]
					\item\label{lem:it:calmness} $\clm_\alpha (c) \geq 0$ for every anxious $\alpha$.
					\item $c\left( z \right)$ is precalm.
				\end{enumerate}
			\end{lemma}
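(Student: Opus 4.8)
The plan is to restate both conditions in terms of a single nonnegative integer function recording the multiplicities of the roots of $h$, and then compare them orbit by orbit under the map $z \mapsto z^k$. Since the sequence has length one, $h^*(c) = \frac{h(z^k)}{h(z)} c(z)$, and Lemma~\ref{lem:valuationUnderMahler} (applicable because every anxious $\alpha$ is nonzero and $k$ is coprime to $p$) gives $v_\alpha\!\left(\frac{h(z^k)}{h(z)}\right) = v_{\alpha^k}(h) - v_\alpha(h)$. Writing $w_\alpha = v_\alpha(h) \ge 0$ for the order of vanishing of the polynomial $h$ at $\alpha$, precalmness of $c$ is equivalent to the existence of a nonnegative, finitely supported integer function $\alpha \mapsto w_\alpha$ on the anxious numbers with
\begin{equation*}
  v_\alpha(c) + w_{\alpha^k} - w_\alpha \ge 0 \quad \text{for every anxious } \alpha, \tag{$\ast$}
\end{equation*}
recovering $h$ as $\prod_\alpha (z - \alpha)^{w_\alpha}$; roots of $h$ placed at calm numbers are irrelevant, since calmness constrains only anxious points.

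The direction (ii)$\Rightarrow$(i) is the easy one. If $h^*(c)$ is calm then $v_\beta(h^*(c)) \ge 0$ at every anxious $\beta$, in particular at each point $\alpha^{k^i}$ of the forward orbit of an anxious $\alpha$ (which is again anxious); summing along the orbit gives $\clm_\alpha(h^*(c)) \ge 0$. Lemma~\ref{lem:actionEffect}, together with $v_\alpha(h) \ge 0$ because $h$ is a polynomial, then yields $\clm_\alpha(c) = \clm_\alpha(h^*(c)) + v_\alpha(h) \ge 0$ when $\alpha$ is not a root of unity, and $\clm_\alpha(c) = \clm_\alpha(h^*(c)) \ge 0$ when it is.

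For (i)$\Rightarrow$(ii) I would solve $(\ast)$ explicitly. When $\alpha$ is an anxious root of unity of order $d$, the map $x \mapsto x^k$ is a bijection on the $d$-th roots of unity (as $\gcd(k,d)=1$), so the orbit of $\alpha$ is a finite cycle $\alpha, \alpha^k, \dots, \alpha^{k^{\mu - 1}}$; putting $P_j = \sum_{i=0}^{j-1} v_{\alpha^{k^i}}(c)$ and $w_{\alpha^{k^j}} = \max_\ell P_\ell - P_j$ satisfies $(\ast)$ with equality except across the wrap-around, where the slack equals $\clm_\alpha(c) \ge 0$. When $\alpha$ is not a root of unity its forward orbit is infinite and injective while the map is $k$-to-$1$ backwards, and I would take the minimal nonnegative solution of $(\ast)$,
\begin{equation*}
  w_\gamma = \max\!\Big( 0,\ \sup \big( - \textstyle\sum_{j=0}^{m-1} v_{\gamma'^{k^j}}(c) \big) \Big),
\end{equation*}
the supremum taken over all $m \ge 1$ and all $k^m$-th roots $\gamma'$ of $\gamma$. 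A path hits each anxious pole of $c$ at most once, so $w_\gamma$ is bounded by the total order of those poles and is finite, and one checks directly that it obeys $(\ast)$.

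The main obstacle is proving that $w$ has finite support, so that $h = \prod_\gamma (z-\gamma)^{w_\gamma}$ is a genuine polynomial, and this is exactly where hypothesis (i) enters. Any $\gamma$ with $w_\gamma > 0$ lies on the forward orbit of some anxious pole $\delta$, where the forced value is $\max\big(0, -\sum_{i=0}^{\ell - 1} v_{\delta^{k^i}}(c)\big)$; since $\clm_\delta(c) = \sum_{i \ge 0} v_{\delta^{k^i}}(c) \ge 0$ by (i) while $v_{\delta^{k^i}}(c) = 0$ for all large $i$, the partial sums become nonnegative beyond the last pole or zero and the forced value drops back to $0$. With only finitely many anxious poles this leaves $w$ supported on a finite set, and $h^*(c)$ is calm. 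The delicate bookkeeping to double-check is the case where forward orbits of different poles merge, or where a pole's orbit later meets another pole or zero; condition (i) applied at every anxious point, not merely at the poles, is what keeps all the intervening partial sums nonnegative.
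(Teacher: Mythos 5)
Your proposal is correct, but it takes a genuinely different route from the paper's. For (i)$\Rightarrow$(ii) the paper argues by induction on the total number of anxious poles of $c$ (counted with multiplicity): it picks a pole $\alpha$ that is maximal along its orbit (or any root-of-unity pole), uses $\clm_\alpha(c) \geq 0$ to find $n \geq 1$ with $v_{\alpha^{k^n}}(c) > 0$, and acts by the transport factor $h(z) = \prod_{i=1}^{n}\left( \alpha^{k^i} - z \right)$, which cancels one order of the pole at $\alpha$ against one order of the zero at $\alpha^{k^n}$; Lemma \ref{lem:actionEffect} shows hypothesis (i) is preserved, and the pole count strictly drops, so the process terminates. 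You instead recast precalmness as solvability of the difference inequality $(\ast)$ in a finitely supported multiplicity function $w_\alpha = v_\alpha(h)$ and solve it in closed form: a max-of-partial-sums potential on each root-of-unity cycle, and the minimal nonnegative solution on the infinite orbits, with (i) entering only to force finite support. Your approach produces $h$ explicitly in one shot (with minimal multiplicities off the cycles); the paper's iterative scheme is the one that scales to sequences of functions, \ie to Theorem \ref{thm:precalmness}, where a closed-form solution of the analogous system would be much harder to write down. One phrase of yours is too strong: along the orbit of a pole $\delta$, the quantity $\max\left( 0, -\sum_{i=0}^{\ell-1} v_{\delta^{k^i}}(c) \right)$ is only a lower bound for $w_{\delta^{k^\ell}}$, since paths arriving through other preimage branches also contribute to the supremum; the clean repair, which your final caveat essentially anticipates, is the identity that any path from $\gamma'$ to $\gamma$ has sum $\clm_{\gamma'}(c) - \clm_\gamma(c) \geq -\clm_\gamma(c)$ by (i), whence $w_\gamma \leq \max\left( 0, \clm_\gamma(c) \right)$, and $\clm$ vanishes far enough along each of the finitely many pole orbits, so the support is finite.
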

			We will later prove Theorem \ref{thm:precalmness}, which is an exact analogue
			of this lemma for sequences of functions.

			\begin{proof}
				\begin{description}
					\item[(ii) $\Rightarrow$ (i)]
						In the language of valuations we can express Definition
						\ref{def:singleCalm} as follows: a function is calm if and only if its valuation
						$v_\alpha$ is nonnegative for every anxious $\alpha$. This implies that
						the calmness of a calm function is nonnegative.

						On the other hand, since $c\left( z \right)$ is precalm, we get a
						polynomial $0 \neq h\left( z \right) \in \ck\left[ z \right]$, such that
						$h^*\left( c\left( z \right) \right) = \frac{h\left( z^k \right)}{h\left(
						z \right)} c\left( z \right)$ is calm. By Lemma \ref{lem:actionEffect},
						\begin{equation*}
							0 \leq
							\begin{cases}
								\clm_\alpha (c) - v_\alpha\left( h\left( z \right) \right)
								& \textrm{ if } \alpha \textrm{ is not a root of unity,}\\
								\clm_\alpha (c)
								& \textrm{ if } \alpha \textrm{ is a root of unity.}
							\end{cases}
						\end{equation*}
						It remains to notice that $v_\alpha\left( h \right) \geq 0$ (since $h$
						is a polynomial) to get $\clm_\alpha (c) \geq 0$ for every anxious $\alpha$.
					\item[(i) $\Rightarrow$ (ii)]
						First note that if $\frac{h_0\left( z^k \right)}{h_0\left( z \right)}c\left(
						z \right)$ is precalm for some polynomial $h_0\left( z \right) \in \ck\left[ z \right]$, then
						so is $c\left( z \right)$ -- we can construct the required polynomial for
						$c$ by multiplying the polynomial $h$ we get for $\frac{h_0\left( z^k \right)}{h_0\left( z
						\right)}c\left( z \right)$ by $h_0$. This simple fact allows
						us to construct the required polynomial step by step, in each step
						eliminating one anxious pole of $c$.

						First pick an anxious $\alpha$ at which $c$ has a pole such that either $\alpha$ is
						a root of unity or $c$ has no poles at $\alpha^{k^n}$ for $n > 1$. If no such
						$\alpha$ exists, then $c$ definitely
						cannot have any poles at anxious roots of unity and if it had a pole
						at $\beta$ which is not a root of unity, then it would have a pole at arbitrarily large
						powers of $\beta$, which is impossible for a rational function. Therefore $c$
						is calm, so also precalm, and the proof is finished.

						Since $\alpha$ is anxious, we know from \pointref{lem:it:calmness} that
						$\clm_\alpha c \geq 0$, but $c$ has
						a pole at $\alpha$, so $v_\alpha\left( c \right) < 0$. This means there
						exists an $n \geq 1$ such that $v_{\alpha^{k^n}}\left( c \right) > 0$. We put
						\begin{equation*}
							h\left( z \right) = \prod_{i = 1}^n \left( \alpha^{k^i} - z \right).
						\end{equation*}
						Now $\frac{h\left( z^k \right)}{h\left( z \right)}$ has only a single pole at
						$\alpha^{k^n}$, a root at $\alpha$ and other, unimportant roots. In fact,
						the root
						of the $i^{\textrm{th}}$ factor of $h\left( z \right)$ cancels out with one
						of the roots of the $\left( i+1 \right)^{\textrm{th}}$ factor of $h\left( z^k
						\right)$, while all the other roots are different from the roots of $h\left(
						z \right)$. The last fact is obvious if $\alpha$ is not a root of unity.
						If $\alpha$ is a root of unity, then it has order coprime to $k$
						(since it is anxious) and therefore $\zeta_k\alpha^{k^i} \neq
						\alpha^{k^j}$ (where $\zeta_k$ is a primitive $k^\textrm{th}$ root of
						unity), because the left hand side is a root of unity of order not
						coprime to $k$. This leaves only the root of the last factor of $h\left( z
						\right)$ and first factor of $h\left( z^k \right)$ corresponding to the
						pole and the root mentioned earlier.
						Therefore $\frac{h\left( z^k \right)}{h\left( z \right)} c\left( z
						\right)$ either has no
						pole at $\alpha$ or has a pole at $\alpha$ of order one less that $c$.
						Furthermore, the total number of its poles, counted with multiplicities, is one
						lower than that of $c$, since the pole at $\alpha^{k^n}$ is reduced by the
						root at this very point.
						It remains to prove that the function $\frac{h\left( z^k \right)}{h\left( z \right)}
						c\left( z \right)$ satisfies \pointref{lem:it:calmness}, which allows us
						to conclude by induction over the
						total number of anxious poles. This is a
						simple application of Lemma \ref{lem:actionEffect} -- $h\left( z
						\right)$ has positive valuations only at numbers of the form
						$\alpha^{k^i}$ for $1 \leq i \leq n$, but the calmnesses
						$\clm_{\alpha^{k^i}}c\left( z \right)$ at these points have to be
						positive, since $v_{\alpha}\left( c\left( z \right) \right)$ was
						negative. \qedhere
				\end{description}
			\end{proof}
		\subsection{Regularity criterion for order one Mahler series}
		 In this subsection we prove some equivalent conditions for the regularity of a power
			series satisfying an equation of the form \eqref{eq:orderOne}.

			\begin{theorem}\label{thm:orderOneCriteria}
				If a formal series $f \in \ck\left[ \left[ z \right] \right]$ satisfies a
				Mahler equation of the form
				\begin{equation*}
					f\left( z \right) = c_1\left( z \right)f\left( z^k \right),
				\end{equation*}
				then the following conditions are equivalent
				\begin{enumerate}[(i)]
					\item $f$ is regular,
					\item\label{thm:orderOneCriteria:it:isPrecalm} $c_1$ is precalm,
					\item \label{thm:orderOneCriteria:it:nonnegativeCalmness} $\clm_\alpha (c_1) \geq 0$ for every anxious $\alpha$.
				\end{enumerate}
			\end{theorem}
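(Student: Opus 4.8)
The plan is to treat the equivalence \pointref{thm:orderOneCriteria:it:isPrecalm} $\Leftrightarrow$ \pointref{thm:orderOneCriteria:it:nonnegativeCalmness} as already settled by Lemma \ref{lem:precalm} applied to the length-one sequence $(c_1)$, so that only the two links with regularity remain. I would prove \pointref{thm:orderOneCriteria:it:isPrecalm} $\Rightarrow$ (i) and (i) $\Rightarrow$ \pointref{thm:orderOneCriteria:it:nonnegativeCalmness}, closing the cycle. Throughout I assume $f \neq 0$, since otherwise $c_1$ is not pinned down by the equation and the equivalence degenerates.

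For \pointref{thm:orderOneCriteria:it:isPrecalm} $\Rightarrow$ (i): precalmness yields a nonzero polynomial $h$ with $h^*(c_1)$ calm. Writing $h = z^m h_0$ with $h_0(0) \neq 0$, the identity $h^*(c_1) = z^{m(k-1)} h_0^*(c_1)$ shows that $h^*(c_1)$ and $h_0^*(c_1)$ differ only by a power of $z$ and hence have the same valuations at every anxious point, so $h_0^*(c_1)$ is calm too and I may assume $h(0) \neq 0$ from the start. Then $g(z) = f(z)/h(z)$ lies in $\ck[[z]]$, and substituting $f = hg$ into the equation gives $g(z) = \frac{h(z^k)}{h(z)} c_1(z)\, g(z^k) = h^*(c_1)(z)\, g(z^k)$, a first-order Mahler equation with calm coefficient. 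By Theorem \ref{thm:DumasReguralityCriterion} the series $g$ is regular, and as $f = hg$ is the product of a polynomial and a regular series and the regular series form a ring, $f$ is regular.

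For (i) $\Rightarrow$ \pointref{thm:orderOneCriteria:it:nonnegativeCalmness} I would argue by contraposition: assume $\clm_\alpha(c_1) < 0$ for some anxious $\alpha$ and manufacture infinitely many linearly independent Cartier iterates. Iterating the equation gives $f(z) = C_N(z) f(z^{k^N})$ with $C_N(z) = \prod_{i=0}^{N-1} c_1(z^{k^i})$, and peeling off the Cartier operators one power of $k$ at a time via Lemma \ref{lem:lambdaProp}.\pointref{lambdaProd} shows that for every word $(r_1,\dots,r_N)$
\begin{equation*}
	\Lambda_{r_N}\left( \cdots \Lambda_{r_1}(f) \cdots \right) = d_w(z)\, f(z), \qquad d_w = \Lambda_{r_N}\left( \cdots \Lambda_{r_1}(C_N) \cdots \right) \in \ck(z),
\end{equation*}
the $k$-sections of a rational function being again rational. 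By Lemma \ref{lem:valuationUnderMahler}, $v_\alpha(C_N) = \sum_{i=0}^{N-1} v_{\alpha^{k^i}}(c_1)$. At each of the $N$ steps I would invoke Lemma \ref{lem:polePerserving} to pick $r_j$ so the valuation does not improve, and rewrite $v_{\alpha^{k^{j-1}}}\!\left( \Lambda_{r_j}(\cdot)(z^k) \right) = v_{\alpha^{k^j}}(\cdot)$ by Lemma \ref{lem:valuationUnderMahler}; chaining the inequalities yields $v_{\alpha^{k^N}}(d_w) \leq v_\alpha(C_N)$. If $\alpha$ is a root of unity of period $\mu$, taking $N = m\mu$ makes $\alpha^{k^N} = \alpha$ and $v_\alpha(d_w) \leq m\,\clm_\alpha(c_1) \to -\infty$, giving poles of unbounded order at the single point $\alpha$; if $\alpha$ is not a root of unity, taking $N$ past the last pole or zero of $c_1$ on the orbit gives $v_{\alpha^{k^N}}(d_w) \leq \clm_\alpha(c_1) < 0$, giving poles at the infinitely many distinct points $\alpha^{k^N}$.

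To conclude, note that $\Lambda_w(f) = d_w f$ is a genuine power series and, since $f \neq 0$, the map $d \mapsto d f$ embeds $\mathrm{span}_\ck\{d_w\}$ into the spanning space of Lemma \ref{lem:lambdaRegularity}. Were $f$ regular, that space would be finite-dimensional, forcing $\mathrm{span}\{d_w\}$ to be a finite-dimensional subspace of $\ck(z)$; but every such subspace has all its poles confined to a fixed finite set of points with a uniform bound on their orders, which both constructions above violate. Hence $f$ is not regular. The main obstacle I anticipate is the bookkeeping behind $\Lambda_{r_N}\cdots\Lambda_{r_1}(f) = d_w f$ with $d_w$ the iterated $k$-sections of $C_N$: the intermediate expressions pair a rational coefficient with a power series, so the product rule of Lemma \ref{lem:lambdaProp}.\pointref{lambdaProd} must be applied inside the ring of Mahler operators rather than naively, in the style of Dumas; the clean separation of the root-of-unity and non-root-of-unity cases in the independence argument is the secondary technical point.
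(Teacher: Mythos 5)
Your proof is correct and takes essentially the same route as the paper: Lemma \ref{lem:precalm} settles (ii)$\Leftrightarrow$(iii), the implication (ii)$\Rightarrow$(i) uses Theorem \ref{thm:DumasReguralityCriterion} together with the ring structure of regular series exactly as the paper does, and (i)$\Rightarrow$(iii) is the same contrapositive argument via iterated Cartier operators, Lemma \ref{lem:polePerserving} and Lemma \ref{lem:valuationUnderMahler}, with the identical split into the root-of-unity case (unbounded pole order at the single point $\alpha$) and the non-root-of-unity case (poles at infinitely many distinct points $\alpha^{k^N}$). Your two extra touches---reducing to $h(0) \neq 0$ so that $f/h$ is a genuine power series, and the explicit standing assumption $f \neq 0$, which is needed to identify $\mathrm{span}_\ck\left\{ d_w f \right\}$ with $\mathrm{span}_\ck\left\{ d_w \right\}$---are legitimate refinements of points the paper leaves implicit.
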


			Before we prove this theorem it is worth noting that it provides an
			alternative proof of the characterisation of regular rational functions.
			Allouche and Shallit \cite[Theorem 3.3]{AlloucheShallit92} have shown the
			following.
			\begin{theorem}
				A rational function $f\left( z \right) = \frac{p\left( z \right)}{q\left( z
				\right)}$ is regular if and only if $q\left( z \right)$ has roots only at
				roots of unity.
			\end{theorem}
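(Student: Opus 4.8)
The plan is to deduce this characterisation from Theorem \ref{thm:orderOneCriteria} by observing that every rational power series satisfies an order one Mahler equation. First I would note that if $f(z) = p(z)/q(z) \in \ck\left[\left[ z \right]\right]$ is a nonzero rational function (so in particular $q(0) \neq 0$), then $f\left(z^k\right)$ is again a nonzero rational function and
\begin{equation*}
	c_1(z) := \frac{f(z)}{f\left(z^k\right)} = \frac{p(z)\,q\left(z^k\right)}{q(z)\,p\left(z^k\right)} \in \ck(z)
\end{equation*}
satisfies $f(z) = c_1(z) f\left(z^k\right)$ by construction. Thus $f$ is an order one Mahler series and Theorem \ref{thm:orderOneCriteria} applies: $f$ is regular if and only if $\clm_\alpha(c_1) \geq 0$ for every anxious $\alpha$.

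The key step is then to evaluate $\clm_\alpha(c_1)$ for each anxious $\alpha$, which I expect to telescope. Anxious numbers are nonzero, so for every $i$ the number $\beta = \alpha^{k^i}$ is nonzero and Lemma \ref{lem:valuationUnderMahler} (recall $k$ is coprime to $\kar\ck$) gives $v_\beta\left(f\left(z^k\right)\right) = v_{\beta^k}(f)$, whence
\begin{equation*}
	v_{\alpha^{k^i}}(c_1) = v_{\alpha^{k^i}}(f) - v_{\alpha^{k^{i+1}}}(f).
\end{equation*}
For a single function the calmness is $\clm_\alpha(c_1) = \sum_i v_{\alpha^{k^i}}(c_1)$, with $\overline{a}_i = i$, and substituting the identity above produces a telescoping sum. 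When $\alpha$ is not a root of unity the powers $\alpha^{k^i}$ are pairwise distinct, so the rational function $f$ has a zero or pole at only finitely many of them, the tail vanishes, and one is left with $\clm_\alpha(c_1) = v_\alpha(f)$. When $\alpha$ is an anxious root of unity the sum runs over $0, \dots, \mu - 1$ with $\alpha^{k^\mu} = \alpha$ by the defining condition of $A_\alpha$, so the telescope closes to give $\clm_\alpha(c_1) = v_\alpha(f) - v_{\alpha^{k^\mu}}(f) = 0$.

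Combining the two cases, the condition $\clm_\alpha(c_1) \geq 0$ for all anxious $\alpha$ holds automatically at anxious roots of unity and, at every $\alpha$ that is not a root of unity, reduces precisely to $v_\alpha(f) \geq 0$, that is, to $f$ having no pole at $\alpha$. Writing $f = p/q$ in lowest terms, its poles are exactly the roots of $q$, so this condition says that $q$ has roots only at roots of unity. Hence $f$ is regular if and only if $q$ has roots only at roots of unity, as claimed. The main obstacle is the bookkeeping in the telescoping computation: checking that the infinite sum for non-roots-of-unity genuinely terminates (using that a rational function has finitely many zeros and poles, so that $\clm_\alpha(c_1)$ is finite rather than containing an infinite term) and that the boundary contribution at $\alpha^{k^\mu}$ cancels correctly in the root-of-unity case.
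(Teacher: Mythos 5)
Your proof is correct and follows essentially the same route as the paper: both set up the order one equation with $c_1(z) = \frac{p(z)q\left(z^k\right)}{q(z)p\left(z^k\right)}$, invoke Theorem \ref{thm:orderOneCriteria}, and evaluate the calmnesses via Lemma \ref{lem:valuationUnderMahler}. The only cosmetic difference is that you telescope $v_{\alpha^{k^i}}(f) - v_{\alpha^{k^{i+1}}}(f)$ directly to get the clean identities $\clm_\alpha(c_1) = v_\alpha(f)$ (non-root of unity) and $\clm_\alpha(c_1) = 0$ (root of unity), whereas the paper splits $c_1$ into the two factors $\frac{p(z)}{p\left(z^k\right)}$ and $\frac{q\left(z^k\right)}{q(z)}$ and argues about the sign of each term separately.
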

			We prove this result using Theorem \ref{thm:orderOneCriteria}, while the
			original proof used explicit computation and basic properties of regular
			sequences.
			\begin{proof}
				Note that $f\left( z \right)$ satisfies the Mahler equation
				\begin{equation*}
					f\left( z \right) = c\left( z \right) f\left( z^k
					\right)
				\end{equation*}
				with $c\left( z \right) = \frac{p\left( z \right)q\left( z^k \right)}{q\left(
				z \right)p\left( z^k \right)}$. The calmness
				\begin{equation*}
					\clm_{\alpha} \left( c\left( z \right) \right) = \clm_{\alpha}
					\frac{p\left( z \right)}{p\left( z^k \right)} + \clm_{\alpha}
					\frac{q\left( z^k \right)}{q\left( z \right)}
				\end{equation*}
				for any anxious $\alpha$. We can
				show that this expression is nonegative for all anxious $\alpha$ if and only
				if $q\left( z \right)$ has roots only at roots of unity. Indeed, the first
				term is always nonegative and so is the second if $q\left( z \right)$ has no
				roots at numbers different from roots of unity. In fact, this is a
				straightforward application of Lemma \ref{lem:valuationUnderMahler}. Assume that $q\left( z \right)$ has a
				root at a number $\alpha$ that is not a root of unity. Then $q\left( z
				\right)$ has a root at $\alpha_0 = \alpha^{k^i}$, $i \geq 0$, such that $q\left( z \right)$ has no root
				at $\alpha_0^{k^i}$ for any $i \geq 1$. The calmness $\clm_{\alpha_0}c\left(
				z \right) < 0$, because the second term is negative while the first cannot be
				positive since $p\left( z \right)$ is coprime to $q\left( z \right)$.
				An application of Theorem \ref{thm:orderOneCriteria} finishes the proof.
			\end{proof}

			\begin{proof}[Proof of Theorem \ref{thm:orderOneCriteria}]
				The equivalence of conditions
				\pointref{thm:orderOneCriteria:it:isPrecalm} and
				\pointref{thm:orderOneCriteria:it:nonnegativeCalmness} follows immediately from Lemma \ref{lem:precalm}.
				We will again start with the simpler of the remaining implications. The
				opposite implication generalises the work of Dumas \cite[Proposition 54]{Dumas93}.
				\begin{description}
					\item[(ii) $\Rightarrow$ (i)] Since $c_1$ is precalm, we have a $h \in \ck\left[ z
						\right]$ such, that $\frac{h\left( z^k \right)}{h\left( z \right)}c_1\left(
						z \right)$ is calm. This, together with the definition of $f$, means that the series $\frac{f\left( z
						\right)}{h\left( z \right)}$ satisfies the Mahler equation
						\begin{equation*}
							\frac{f\left( z \right)}{h\left( z \right)} = c_1'\left( z \right)
							\frac{f\left( z^k \right)}{h\left( z^k \right)}
						\end{equation*}
						with $c_1'\left( z \right) = \frac{h\left( z^k \right)}{h\left( z
						\right)}c_1\left( z \right)$ calm. From Theorem
						\ref{thm:DumasReguralityCriterion} we know that series satisfying such a
						Mahler equation are regular. But polynomials are also regular and regular
						power series form a ring \cite[Corollary 3.2]{AlloucheShallit92}, so
						$f\left( z \right) = h\left( z \right)\frac{f\left( z \right)}{h\left( z
						\right)}$ is also regular.
					\item[(i) $\Rightarrow$ (iii)] We will prove that if 
					 $\clm_\alpha (c_1) < 0$ for some anxious $\alpha$, then $f$ cannot be
						regular. Using the Mahler equation \eqref{eq:orderOne} and Lemma
						\ref{lem:lambdaProp}.\pointref{lambdaProd} we can for any
						number $L+1$ of indices $r_i \in \left\{ 0, \dots, k - 1 \right\}$ write
						\begin{equation}\label{eq:lotsaLambdas}
							\Lambda_{r_{L+1}} \dots \Lambda_{r_1} f(z) = \left( \Lambda_{r_L} \dots
							\Lambda_{r_1} \prod_{l = 0}^{L} c_1\left( z^{k^l} \right)
							\right) f(z).
						\end{equation}
						We will now show that the $\ck$-vector space $V$ generated by these
						expressions is not finitely dimensional -- by Lemma
						\ref{lem:lambdaRegularity} this implies that $f$ is not regular.

					 We know that $\clm_\alpha c_1 < 0$ for some anxious $\alpha$. Let us
						define a set of indices $R_\alpha$. If $\alpha$ is a root of
						unity, then $R_\alpha = \left\{ im - 1 \colon i \geq 0 \right\}$ where $m$ is the
						smallest number such that $\alpha^{k^m} \neq \alpha$.
						If $\alpha$ is not a root of unity, then $R_\alpha$ consist of all natural numbers
						greater than $m$, where $m$ is the largest number for which
						$v_{\alpha^{k^m}}\left( f \right) \neq 0$. In both cases the set
						$R_\alpha$ has two important properties -- it contains arbitrarily large
						numbers and the valuation
						$v_\alpha\left( \prod_{l = 0}^L c_1\left( z^{k^l} \right) \right) < 0$ for
						any $L \in R_\alpha$. The
						latter property holds for $\alpha$ not a root of unity, because the
						valuation is equal to $\clm_\alpha c_1$ and for $\alpha$ a root of unity
						because the valuation is
						equal to a natural multiple of $\clm_\alpha c_1$.

						In \eqref{eq:lotsaLambdas} we put no restrictions on the choice of $r_i$.
						Therefore, by repeated applications of Lemma
						\ref{lem:polePerserving}, we can choose $r_i$ so that
						\begin{equation*}
							v_\alpha\left( \Lambda_{r_{L+1}}\dots\Lambda_{r_1} \prod_{l = 0}^L
							c_1\left( z^{k^{l + L + 1}} \right) \right) \leq v_\alpha\left( \prod_{l =
							0}^L c_1\left( z^{k^l} \right) \right)
						\end{equation*}
						for $L \in R_\alpha$. By Lemma \ref{lem:valuationUnderMahler}, this can be
						also written as
						\begin{equation}\label{eq:nongrowingValuation}
							v_{\alpha^{k^{L+1}}} \left( \Lambda_{r_{L+1}} \dots  \Lambda_{r_1} \prod_{l = 0}^L
							c_1\left( z^{k^l} \right) \right) \leq v_\alpha\left(\prod_{l =
							0}^L c_1\left( z^{k^l}\right) \right).
						\end{equation}
						Note that $v_\alpha\left(\prod_{l = 0}^L c_1\left( z^{k^l}\right) \right)
						< 0$ for $L \in R_\alpha$.

						By the form of the Mahler equation \eqref{eq:orderOne} and Lemma
						\ref{lem:lambdaProp}.\pointref{lambdaProd}, we can see that the vector
						space $V$ contains only rational functions multiplied by
						the power series $f\left( z \right)$. It is therefore a subset of a one
						dimensional $\ck\left( z \right)$-vector space, and we identify it with a
						subset of $\ck\left( z \right)$ via the map $V \ni g\left( z
						\right)f\left( z \right) \mapsto g\left( z \right) \in \ck\left( z
						\right)$. This allows us to talk about
						valuations of elements of $V$ via the formula $v_\alpha\left( g\left( z
						\right)f\left( z \right)	\right) = v_\alpha\left( g\left( z \right)
						\right)$, $g\left( z \right) \in \ck\left( z \right)$.

						Using equations \eqref{eq:lotsaLambdas} and \eqref{eq:nongrowingValuation}
						and varying $L \in R_\alpha$ allows us to look
						at valuations of elements in the $\ck$-vector space $V$.
						There are two cases to consider:
						\begin{description}
							\item[$\alpha$ is not a root of unity] In this case there are infinitely many
								valuations $v_{\alpha^{k^{L + 1}}}$ for which there exist elements of
								$V$ with negative valuation. However, if the vector space $V$ was finitely
								dimensional, then each of its
								generators would have negative valuation only at finitely many points,
								which gives a contradiction.
							\item[$\alpha$ is a root of unity] In this case, by the form of
								$R_\alpha$, $\alpha^{k^{L+1}} = \alpha$ for $L \in R_\alpha$ and the
								product on the right hand side of \eqref{eq:nongrowingValuation} can
								have arbitrarily low valuation. Thus there are elements in the vector
								space $V$ with arbitrarily low valuation with respect to $v_\alpha$, so
								as before $V$ cannot be finitely dimensional.
						\end{description}
				\end{description}
			\end{proof}
	\section{Calmnesses and the precalmness property}\label{sect:calmnesses}
	 In this section we explore the relation between calmnesses of a sequence of
		functions and its precalmness. We prove a general version of Lemma
		\ref{lem:precalm}.
		\begin{theorem}\label{thm:precalmness}
			A sequence of rational functions $\left( c_i \right)_{i = 1}^n$ is precalm if
			and only if for every anxious $\alpha$ and for every $(a_i) \in A_\alpha$ we
			have $\clm_{\alpha, (a_i)} (c_i) \geq 0$.
		\end{theorem}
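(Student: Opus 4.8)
The plan is to rephrase the statement as a feasibility question for a system of difference inequalities on the multiplicities of a candidate polynomial, and then to read off the calmness conditions as the classical ``no negative cycle'' criterion for that system. For a nonzero polynomial $h$ set $w(\gamma)=v_{\gamma}(h)$ for each anxious $\gamma$; these are nonnegative integers, all but finitely many of them zero. Since anxious numbers are nonzero and their forward orbit under $x\mapsto x^{k}$ stays anxious (a power of a non-root of unity is not a root of unity, and a root of unity of order coprime to $k$ stays one), Lemma~\ref{lem:valuationUnderMahler} gives $v_{\beta}(h(z^{k^{j}}))=v_{\beta^{k^{j}}}(h)$, so the order of the $j$-th term of $h^{*}(c_i)$ at an anxious $\beta$ is $v_{\beta}(c_j)+w(\beta^{k^{j}})-w(\beta)$. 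Hence $h^{*}(c_i)$ is $k$-calm exactly when
\begin{equation*}
w(\beta)\le w\bigl(\beta^{k^{j}}\bigr)+v_{\beta}(c_j)\qquad\text{for all anxious }\beta\text{ and all }1\le j\le n,\tag{$\star$}
\end{equation*}
and $(c_i)$ is $k$-precalm exactly when $(\star)$ has a finitely supported solution $w\ge 0$, for then $h=\prod_{\gamma}(z-\gamma)^{w(\gamma)}$ realizes it. The forward implication is then immediate as in Lemma~\ref{lem:precalm}: a $k$-calm sequence has nonnegative calmness, since every summand $v_{\alpha^{k^{\overline{a}_i}}}(c_{a_i})$ is the valuation of a calm function at an anxious point and so is $\ge 0$, and Lemma~\ref{lem:actionEffect} transports this back to $(c_i)$ because $v_{\alpha}(h)\ge 0$ for the polynomial $h$.

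For the converse I would observe that $(\star)$ decouples over the orbits of $x\mapsto x^{k}$, so it suffices to solve it on each of the finitely many orbits carrying a pole or zero of some $c_j$ and to put $w=0$ elsewhere. On a single orbit $(\star)$ is a system of difference constraints for the weighted digraph whose vertices are the orbit points and which has, for each vertex $\beta$ and each $j$, an edge $\beta\to\beta^{k^{j}}$ of weight $v_{\beta}(c_j)$; a walk from $\alpha$ along a sequence $(a_i)$ accumulates exactly $\clm_{\alpha,(a_i)}(c_i)$, and membership of $(a_i)$ in $A_{\alpha}$ says precisely that the walk closes into a cycle (root of unity case) or is the full infinite forward path (non-root of unity case). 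Thus the hypothesis is exactly that every cycle, resp. every infinite forward walk, has nonnegative weight.

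I would then build a potential by the shortest-path recipe $w(\gamma)=\max(0,\sup_{W}(-\mathrm{wt}(W)))$, the supremum over finite walks $W$ ending at $\gamma$; inequality $(\star)$ is immediate from prepending one edge. The substance, and the main obstacle, is to see that $w$ is finite and finitely supported, and both follow by completing a finite walk $W$ ending at $\gamma$ to an admissible one whose weight the hypothesis forces to be $\ge 0$: for a non-root of unity one appends a minimal infinite forward tail (of eventual weight $0$), so $-\mathrm{wt}(W)$ is bounded by the least weight $w_{\infty}(\gamma)$ of a forward walk from $\gamma$, and $w_{\infty}(\gamma)=0$ once $\gamma$ lies past the finitely many active points of its orbit; for a root of unity one appends a minimal cycle back to the start of $W$, and the orbit is in any case the finite cycle through $\gamma$, since any anxious preimage of a cycle point again lies on the cycle (other preimages have order divisible by $k$ and are thus calm). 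Assembling the per-orbit potentials into a single polynomial $h$ then makes $h^{*}(c_i)$ $k$-calm; and because the whole construction proceeds one orbit at a time, it simultaneously proves that $k$-precalmness is equivalent to $\alpha$-precalmness for every anxious $\alpha$, as promised in the remark.
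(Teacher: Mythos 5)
Your reduction of precalmness to feasibility of the difference system $(\star)$ is correct, and it constitutes a genuinely different route from the paper's. The translation itself is sound: by Lemma \ref{lem:valuationUnderMahler} (applicable because anxious numbers are nonzero and their forward orbits under $z \mapsto z^k$ stay anxious), a polynomial $h$ witnesses precalmness exactly when $w(\gamma) = v_\gamma(h)$ is a nonnegative, finitely supported solution of $(\star)$, and conversely any such $w$ is realized by $h = \prod_\gamma (z-\gamma)^{w(\gamma)}$. The paper instead eliminates poles iteratively: at roots of unity through Proposition \ref{prop:rootsElimination} and its chain of auxiliary lemmas (repeated action by linear factors chosen via the minimal-calmness bookkeeping, with a decreasing counter $\sigma$), and at the remaining anxious numbers through the step-by-step procedure in the proof of Theorem \ref{thm:precalmness}, with a separate termination argument. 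Your shortest-path potential $w(\gamma) = \max\left(0, \sup_W(-\mathrm{wt}(W))\right)$ replaces both procedures by one closed-form construction, treats both cases uniformly as the classical ``no negative cycle / no negative infinite path'' feasibility criterion, and yields as a byproduct the equivalence of $k$-precalmness with $\alpha$-precalmness for all anxious $\alpha$, which the paper only announces as visible from its proof. The verification of $(\star)$ by appending an edge, and the finiteness of $w(\gamma)$ by completing a walk to an admissible closed walk (root-of-unity case) or appending an infinite tail of finite weight (non-root-of-unity case), are both correct.

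There is, however, one step you must complete: finite support of $w$ on a non-root-of-unity orbit. As written, you argue $w(\gamma) \leq \max\left(0, w_\infty(\gamma)\right)$ and that $w_\infty(\gamma) = 0$ once $\gamma$ lies past the finitely many active points of its orbit. But the set of orbit points that are \emph{not} past the active points is infinite: the orbit (as an equivalence class under $z \mapsto z^k$) contains every $k^s$-th root of every active point, these roots are again anxious (nonzero non-roots of unity), and each has an active point in its forward orbit. So finiteness of the support does not follow from this observation alone. The missing step is easy: $w(\gamma) > 0$ requires a finite walk of negative weight ending at $\gamma$, hence a negative-weight edge on that walk, hence $\gamma$ must be a strict forward iterate of a pole of some $c_j$; moreover, since a non-root of unity $\beta$ satisfies $\beta^{k^u} = \beta^{k^{u'}}$ only for $u = u'$, each of the finitely many poles has only finitely many forward iterates whose own forward orbit still meets an active point. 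Intersecting the two conditions confines the support to a finite set, and with that the orbit-by-orbit assembly of $h$ goes through.
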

		We will prove this result in two steps, first considering only the case when
		$\alpha$ is a root of unity and later finishing the proof for other anxious
		numbers.
		\subsection{Eliminating poles at roots of unity}
			We will first focus on roots of unity and show that under the assumption that
			all the calmnesses are nonnegative, we can eliminate poles at
			such numbers by acting on the sequence of coefficients with a certain
			polynomial. We introduce a	few more definitions and lemmas that are required to treat
			this case and will not be used elsewhere.
			\begin{definition}
				Let $\alpha$ be an anxious root of unity. Set $h_\alpha\left( z
				\right) = z - \alpha$.
			\end{definition}
			From now on, fix an anxious root of unity $\alpha$ and let $0 < m \in \N$ be the
			smallest positive integer such that $\alpha^{k^m} = \alpha$. Such an integer
			exists because anxious roots of unity are exactly roots of unity of order
			coprime to $k$ and furthermore $\left( \alpha^{k^i} \right)^{k^m} =
			\alpha^{k^i}$ for any $i \in \N$. Let also $\mathcal{A} = \bigsqcup_{i =
			0}^{m - 1} A_{\alpha^{k^i}}$ be the disjoint union of all the sequences corresponding to
			calmnesses for all $\alpha^{k^i}$, $0 \leq i \leq m - 1$.
			\begin{lemma}\label{lem:simpleRootsOperations}
				Let $\left( c_i \right)$ be a sequence of rational functions. Then for $\left(
				\widetilde{c}_i \right) = \left( h_{\alpha^{k^{j_0}}} \right)^*\left( c_i	\right)$ we have
				\begin{equation*}
					v_{\alpha^{k^j}}\left( \widetilde{c}_i \right) = v_{\alpha^{k^j}}\left(
					c_i \right) +
					\begin{cases}
						-1 & \textrm{ if } j \equiv j_0 \pmod{m} \textrm{ and } i \not\equiv \left( j_0 - j
						\right) \pmod{m}\\
						1 & \textrm{ if } i \equiv \left( j_0 - j \right) \pmod{m} \textrm{ and }
						j \not\equiv j_0 \pmod{m}\\
						0 & \textrm{ otherwise.}
					\end{cases}
				\end{equation*}
			\end{lemma}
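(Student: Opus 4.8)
The plan is to expand the action explicitly and reduce everything to the valuation of a single linear factor. Writing $\beta = \alpha^{k^{j_0}}$ (which is again an anxious root of unity, so $h_\beta$ is defined), the definition of the action gives
\begin{equation*}
	\widetilde{c}_i = \frac{h_{\beta}\left( z^{k^i} \right)}{h_{\beta}\left( z \right)} c_i\left( z \right) = \frac{z^{k^i} - \beta}{z - \beta}\, c_i\left( z \right),
\end{equation*}
so by additivity of valuations it suffices to compute the contribution of the rational function $\frac{z^{k^i} - \beta}{z - \beta}$ at each point $\alpha^{k^j}$; the term $v_{\alpha^{k^j}}\left( c_i \right)$ then simply comes along for the ride.

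First I would record the arithmetic underlying the whole computation. Since $\alpha$ is an anxious root of unity, its order $b$ is coprime to $k$, so $k$ is invertible modulo $b$ and $m$ is precisely the multiplicative order of $k$ modulo $b$. Consequently $\alpha^{k^a} = \alpha^{k^{a'}}$ if and only if $a \equiv a' \pmod{m}$, and in particular $\alpha^{k^0}, \dots, \alpha^{k^{m-1}}$ are pairwise distinct. With this in hand the denominator is immediate: $v_{\alpha^{k^j}}\left( z - \beta \right)$ equals $1$ exactly when $\alpha^{k^j} = \alpha^{k^{j_0}}$, that is when $j \equiv j_0 \pmod{m}$, and $0$ otherwise.

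For the numerator I would avoid computing multiplicities directly and instead invoke Lemma \ref{lem:valuationUnderMahler}. Since $k$ is coprime to $p = \kar\ck$, so is the exponent $k^i$; applying the lemma to the linear function $z - \beta$ at the nonzero point $\alpha^{k^j}$ gives
\begin{equation*}
	v_{\alpha^{k^j}}\left( z^{k^i} - \beta \right) = v_{\left( \alpha^{k^j} \right)^{k^i}}\left( z - \beta \right) = v_{\alpha^{k^{i+j}}}\left( z - \beta \right),
\end{equation*}
which by the previous paragraph equals $1$ exactly when $i + j \equiv j_0 \pmod{m}$, i.e. when $i \equiv j_0 - j \pmod{m}$, and $0$ otherwise. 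Thus the rational factor contributes $+1$ to $v_{\alpha^{k^j}}$ precisely when $i \equiv j_0 - j \pmod{m}$ and $-1$ precisely when $j \equiv j_0 \pmod{m}$. Reading off the four combinations of these two independent congruences reproduces exactly the three cases of the statement: both conditions can hold simultaneously only when $i \equiv 0 \pmod{m}$, in which case the $+1$ and $-1$ cancel; otherwise at most one condition holds, giving the stated $+1$, $-1$, or $0$.

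There is no serious obstacle here; once the action is written out the computation is elementary. The only points requiring care are the equivalence $\alpha^{k^a} = \alpha^{k^{a'}} \iff a \equiv a' \pmod{m}$, which rests on the minimality of $m$ together with $\gcd(k, b) = 1$, and the (equally mild) observation that $k^i$ is prime to the characteristic, so that Lemma \ref{lem:valuationUnderMahler} applies. Everything else is bookkeeping of the two modular conditions on $i$ and $j$.
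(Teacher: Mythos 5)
Your proof is correct and follows essentially the same route as the paper's: both reduce the claim by additivity of valuations to the factor $\frac{h_{\alpha^{k^{j_0}}}\left( z^{k^i} \right)}{h_{\alpha^{k^{j_0}}}\left( z \right)}$, apply Lemma \ref{lem:valuationUnderMahler} to rewrite the numerator's valuation as $v_{\alpha^{k^{j+i}}}\left( z - \alpha^{k^{j_0}} \right)$, and then do the same bookkeeping of the two congruences modulo $m$. Your version merely spells out the equivalence $\alpha^{k^a} = \alpha^{k^{a'}} \iff a \equiv a' \pmod{m}$, which the paper leaves implicit in its setup.
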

			\begin{proof}
				By Lemma \ref{lem:valuationUnderMahler}, it suffices to check that
				\begin{equation*}
					\begin{split}
					v_{\alpha^{k^j}}\left( \frac{h_{\alpha^{k^{j_0}}}\left( z^{k^i}
					\right)}{h_{\alpha^{k^{j_0}}}\left( z \right)} \right) &=
					v_{\alpha^{k^{j + i}}}\left( z - \alpha^{k^{j_0}} \right) -
					v_{\alpha^{k^j}}\left( z - \alpha^{k^{j_0}} \right) = \\ &=
					\begin{cases}
						-1 & \textrm{ if } j \equiv j_0 \pmod{m} \textrm{ and } i \not\equiv \left( j_0 - j
						\right) \pmod{m}\\
						1 & \textrm{ if } i \equiv \left( j_0 - j \right) \pmod{m} \textrm{ but }
						j \not\equiv j_0 \pmod{m}\\
						0 & \textrm{ otherwise.}
					\end{cases}
					\end{split}
				\end{equation*}
				The latter equality stems from the fact that the first term is nonzero (and
				so equal to one) when $(j + i) \equiv j_0 \pmod{m}$, the second term is
				nonzero when $j \equiv j_0 \pmod{m}$ and if both these conditions are true,
				then the terms cancel out.
			\end{proof}
			The following definition will be needed just for the next few lemmas.
			\begin{definition}
				For a sequence of rational functions $\left( c_i \right)$, consider some
				term $v_{\alpha^{k^a}}\left( c_{b} \right)$. Among all the calmnesses
				$\clm_{\alpha, \left( a_i \right)}\left( c_i \right)$
				associated with sequences $\left( a_i \right) \in \mathcal{A}$ that contain
				the term $v_{\alpha^{k^a}}\left( c_{b} \right)$ as one
				of their summands (\ie  sequences $\left( a_i \right) \in
				A_{\alpha^{k^l}}$ ($0 \leq l \leq m - 1$) such that for some $y$,
				$\alpha^{k^{l \overline{a}_y}} = \alpha^{k^a}$ and $c_b = c_{a_y}$), there
				are some having minimal value. We will call them the \emph{minimal calmnesses containing
				$v_{\alpha^{k^a}}\left( c_{b} \right)$} and we will denote the
				set of sequences $\left( a_i \right)$ associated with them by $\minclm_{a,
				b}\left( c_i
				\right) \subset \mathcal{A}$.
			\end{definition}
			\begin{proposition}\label{prop:rootsElimination}
				Let the sequence $\left( c_i(z) \right)_{i =1}^n$, $c_i\left( z \right)
				\in \ck\left(	z \right)$ be of length $n \geq 1$. If for every $\left( a_i
				\right) \in \mathcal{A}$,	$\clm_{\left( a_i \right)}\left( c_i \right) \geq
				0$, then there exists a polynomial $0 \neq h \in \ck\left[ z \right]$ such
				that no term of the sequence $h^* \left( c_i\left( z \right) \right)$ has a
				pole at $\alpha^{k^i}$ for $i \geq 0$.
			\end{proposition}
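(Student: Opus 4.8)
The plan is to reinterpret the statement as a problem about a finite weighted directed graph and to solve it with the classical potential criterion for the absence of negative cycles. First I would set up the graph $G$ whose vertex set is $\Z/m\Z$, identifying the residue $j$ with the orbit point $\alpha^{k^j}$ (these are $m$ genuinely distinct points, since $m$ is the minimal period). For each vertex $j$ and each symbol $i \in \{1,\dots,n\}$ I draw a directed edge $j \to (j+i \bmod m)$ carrying the weight $w(j,i) = v_{\alpha^{k^j}}(c_i) \in \Z \cup \{+\infty\}$, the value $+\infty$ occurring exactly when $c_i = 0$. The encoding is chosen so that a sequence $(a_i)_{i=0}^{\mu-1} \in A_{\alpha^{k^l}}$ is precisely a closed walk based at vertex $l$ (the condition $(\alpha^{k^l})^{k^{\overline a_\mu}} = \alpha^{k^l}$ says $\overline a_\mu \equiv 0 \pmod m$, so the walk returns to $l$), and by Definition \ref{def:calmness} its calmness $\clm_{(a_i)}(c_i)$ equals the total weight of that walk. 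Thus the hypothesis ``$\clm_{(a_i)}(c_i) \ge 0$ for all $(a_i) \in \mathcal{A}$'' is exactly the statement that $G$ has no directed cycle of negative total weight, and ``no term of $h^*(c_i)$ has a pole at any $\alpha^{k^j}$'' is exactly the statement that every edge weight is nonnegative.

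Next I would record the effect of the action on edge weights. Lemma \ref{lem:simpleRootsOperations} says that acting by $h_{\alpha^{k^{j_0}}} = z - \alpha^{k^{j_0}}$ adds $+1$ to the weight of every edge entering vertex $j_0$ and $-1$ to every edge leaving it, leaving self-loops at $j_0$ and all other weights unchanged; that is, it performs the potential transformation $w(j \to j') \mapsto w(j \to j') + \delta_{j_0}(j') - \delta_{j_0}(j)$. Since actions compose and valuations are additive, acting by $h = \prod_{j=0}^{m-1}(z - \alpha^{k^j})^{e_j}$ with $e_j \in \N$ replaces each weight $w(j \to j')$ by $w(j \to j') + e_{j'} - e_j$. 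The proposition therefore reduces to producing nonnegative integers $e_0,\dots,e_{m-1}$ with $w(j \to j') + e_{j'} - e_j \ge 0$, equivalently $e_j - e_{j'} \le w(j \to j')$, for every edge.

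Finally I would invoke the standard potential construction. Because the symbol $i=1$ is always available ($n \ge 1$), $G$ is strongly connected, and the infinite-weight edges ($c_i = 0$) are harmless, being already nonnegative and staying so under the transformation. Adjoining a virtual source $s$ with a weight-$0$ edge to every vertex introduces no new cycle, so the no-negative-cycle assumption guarantees that the shortest-walk distances $d(j)$ from $s$ are finite integers satisfying the relaxation inequalities $d(j') \le d(j) + w(j \to j')$. Setting $e_j = -d(j)$ then gives $e_j - e_{j'} = d(j') - d(j) \le w(j \to j')$, as required, and $e_j \ge 0$ because the direct edge from $s$ forces $d(j) \le 0$. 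Taking $h = \prod_{j}(z - \alpha^{k^j})^{e_j}$, a nonzero polynomial, completes the argument.

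I expect the only genuine work to lie in the bookkeeping behind the two dictionaries: verifying that $A_{\alpha^{k^l}}$-sequences correspond to based closed walks with calmness equal to total walk weight, and confirming via Lemma \ref{lem:simpleRootsOperations} that the factors $z - \alpha^{k^j}$ realize exactly the elementary potentials $\delta_j$. Once those translations are in place, the existence of the desired $h$ is the textbook fact that a weighted digraph without negative cycles admits a feasible potential, and the remaining nuisances — nonnegativity of the exponents and the $+\infty$ weights from vanishing $c_i$ — are dispatched by the virtual-source shift.
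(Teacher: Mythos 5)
Your proof is correct, and it takes a genuinely different route from the paper's. The paper proves the proposition by a chain of reductions --- first replacing the length-$n$ sequence by one of length $m$ via the minima $s_{i,j}$, then handling zero terms (Lemma \ref{lem:zeroTermsRootsElimination}), then normalizing so that the minimal calmnesses containing each term vanish (Lemma \ref{lem:nonzeroMinclmRootsElimination}) --- before running, in Lemma \ref{lem:simpleRootsElimination}, an explicit descent: at each step it acts by $\prod_{i_0 \in S_j} h_{\alpha^{k^{j+i_0}}}$ for a residue $j$ of most negative valuation and shows, using the inequalities \eqref{ieq:positiveColumns}, that the total pole count $\sigma$ strictly drops. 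Your argument compresses all of this into a single invocation of the feasible-potential criterion for weighted digraphs. Your two dictionaries are accurate: closed walks based at $l$ are exactly the sequences in $A_{\alpha^{k^l}}$, with calmness equal to total walk weight (the identification of the return condition with $\overline{a}_\mu \equiv 0 \pmod m$ uses that $x \mapsto x^k$ is injective on roots of unity of order coprime to $k$, which is precisely where anxiousness enters), and Lemma \ref{lem:simpleRootsOperations} is precisely the elementary potential transformation $w(j \to j') \mapsto w(j \to j') + \delta_{j_0}(j') - \delta_{j_0}(j)$, self-loops included. The hypothesis thus becomes ``no negative cycle'' and the goal ``a feasible nonnegative potential'', which the super-source/shortest-path trick delivers in one stroke, simultaneously giving finiteness and integrality of the $d(j)$, the relaxation inequalities, and $e_j = -d(j) \geq 0$; the $+\infty$ weights from vanishing $c_i$ and the parallel edges occurring when $n > m$ are indeed harmless. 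What your approach buys is uniformity (no reduction to length $m$, no special treatment of zero terms or of minimal calmnesses) and a conceptual reading of the hypothesis as shortest-path duality; what the paper's buys is self-containedness in its valuation-theoretic language, at the price of substantially more bookkeeping. Both proofs are effective: yours via Bellman--Ford, the paper's via its descent on $\sigma$.
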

			We will prove this proposition in a series of lemmas.
			\begin{lemma}\label{lem:simpleRootsElimination}
				Let the sequence $\left( c_i(z) \right)_{i =1}^{m}$, $0 \neq c_i\left( z \right)
				\in \ck\left(	z \right)$ be of length $m$ and assume that it contains no
				zero terms. In such a case, if for every $a \geq 0$, $b \geq 1$ and $\left(
				a_i \right) \in \mathcal{A}$,	$\clm_{\left( a_i \right)}\left( c_i \right)
				\geq 0$ and for every $\left( a_i \right) \in
				\minclm_{a,b}\left( c_i \right)$ the calmness $\clm_{\left( a_i \right)}\left(
				c_i \right) = 0$, then there exists a polynomial
				$0 \neq h \in \ck\left[ z \right]$ such that the sequence $h^*
				\left( c_i\left( z \right) \right)$ is $\alpha^{k^i}$-calm for any $i
				\geq 0$.
				\begin{proof}
					We will construct the required polynomial in steps. In every step we will
					choose a polynomial to act on the sequence and substitute the result for the
					original sequence. We will take $h$ to be the product of these polynomials.
					Every step will reduce the sum $\sigma = -\sum_{i,j \colon v_{\alpha^{k^i}}\left( c_j
					\right) < 0} v_{\alpha^{k^i}}\left( c_j \right)$. This sum is always
					nonnegative and if it is $0$, then no term of the sequence has a pole at
					$\alpha^{k^i}$ for $i \geq 0$. By Lemma \ref{lem:actionEffect}, calmness
					$\clm_{\left( a_i \right)}\left( c_i \right)$ remains unchanged upon the
					action of $\ck\left( z \right)$, and so it is enough to	prove that if
					$\sigma > 0$, we can find a polynomial $h$ such that acting by $h$ produces
					a sequence with a smaller value of $\sigma$.

					Assume $\sigma > 0$ and pick $0 \leq j \leq m$ such that $v_{\min}\left( j
					\right) := \min_{i}\left\{ v_{\alpha^{k^j}}\left( c_i \right) \right\}$ is
					minimal among all $j$. Define $S_j := \left\{ 1
					\leq i \leq m	\colon v_{\alpha^{k^j}}\left( c_{i} \right) =
					v_{\min}\left( j \right) \right\}$. The choice of $j$ satisfies
					$v_{\min}\left( j \right) < 0$ since $\sigma > 0$. The set $S_j$ is neither
					empty (by the definition of $v_{\min}\left( j \right)$) nor does it contain all possible indices (for
					example $m \not\in S_j$, because the valuation $v_{\alpha^{k^j}}\left( c_m
					\right)$ is equal to the calmness $\clm_{\alpha^{k^j}, (m)}\left( c_i
					\right)$ associated with the one element sequence $(m)$ and therefore
					$v_{\alpha^{k^j}}\left( c_m \right) \geq 0$).
					Pick an $i_0 \in S_{j}$. For any $i_1 \in \left\{ 1, \dots, m
					\right\}\setminus S_{j}$, we can pick a sequence $\left( a_i \right) \in
					\minclm_{j, i_1}\left( c_i \right)$. By assumption $\clm_{\left( a_i
					\right)}\left( c_i \right) = 0$. Consider the sequence $\left( b_i
					\right)$ of length equal to the length of $\left( a_i \right)$ plus one and
					such that
					\begin{equation*}
						b_i = \begin{cases}
							a_i & \textrm{ for } i < i'\\
							i_0 & \textrm{ for } i = i'\\
							i_2 \equiv i_1 - i_0 \pmod{m} & \textrm{ for } i = i' + 1\\
							a_{i-1} & \textrm{ for } i > i' + 1
						\end{cases}
					\end{equation*}
					with $i'$ such that $a_{i'} = i_1$. If we treat $\left( b_i \right)$ as a
					sequence associated with the same anxious number as $\left( a_i \right)$, we
					get
					\begin{equation*}
						\begin{split}
						\clm_{\left( b_i \right)}\left( c_i \right) &= \clm_{\left( a_i
						\right)}\left( c_i \right) - v_{\alpha^{k^{j}}}\left( c_{i_1} \right) +
						v_{\alpha^{k^{j}}}\left( c_{i_0} \right) + v_{\alpha^{k^{j +
						i_0}}}\left( c_{i_2} \right) = \\ &= - v_{\alpha^{k^{j}}}\left( c_{i_1} \right) +
						v_{\alpha^{k^{j}}}\left( c_{i_0} \right) + v_{\alpha^{k^{j +i_0}}}\left(
						c_{i_2} \right).
						\end{split}
					\end{equation*}
					Again by assumption $\clm_{\left( b_i \right)}\left( c_i \right) \geq 0$,
					which gives us
					\begin{equation*}
						v_{\alpha^{k^{j + i_0}}}\left( c_{i_2} \right) \geq
						v_{\alpha^{k^{j}}}\left( c_{i_1} \right) - v_{\alpha^{k^{j}}}\left(
						c_{i_0} \right) > 0,
					\end{equation*}
					where the last inequality holds because $i_1 \not\in S_{j}$.
					Therefore, for any $i_1 \not\in S_{j}$ we have $v_{\alpha^{k^{j +
					i_0}}}\left( c_{i_2} \right) > 0$. On the other hand, for $i_0 \neq m$,
					by applying the property $\clm_{\left( a_i \right)}\left( c_i
					\right) \geq 0$ to the length two sequence $\left( i_0, m - i_0 \right) \in
					A_{\alpha^{k^{j}}}$, we obtain
					$v_{\alpha^{k^{j + i_0}}}\left( c_{m - i_0} \right) > 0$.
					Applying the above reasoning for all $i_0 \in S_{j}$
					provides us with inequalities
					\begin{equation}
					\label{ieq:positiveColumns}
						v_{\alpha^{k^{j + i_0}}}\left( c_{i_2} \right) > 0,
					\end{equation}
					where $i_2 = i_1 - i_0 \pmod{m}$ for some $i_1 \not\in S_{j}$ or $i_2 = m -
					i_0$. Recall further that $m \not\in S_{j}$.

					Take
					$h = \prod_{i_0 \in S_{j}} h_{\alpha^{k^{j + i_0}}}$. We claim that the
					action of $h$ on $\left( c_i \right)$ lowers $\sigma$. This is a
					straightforward application of Lemma \ref{lem:simpleRootsOperations}. In
					fact, the
					valuations $v_{\alpha^{k^{j}}}\left( c_{i_0} \right)$ for $i_0 \in
					S_{j}$ are all negative and acting on them by $h$ adds one to each of them, so it
					is enough to note that this action does not cause any of the remaining valuations to
					drop below
					zero. We pick $i_0 \in S_{j}$ and check that this is the case for
					valuations of the form $v_{\alpha^{k^{j + i_0}}}\left( c_{i_2} \right)$. The
					inequalities of the form \eqref{ieq:positiveColumns} and the fact that
					acting by the
					polynomial $h$ lowers valuations by at most $1$ (Lemma
					\ref{lem:simpleRootsOperations}) allows us to check this
					only for $i_2 \equiv i_1 - i_0 \pmod{m}$, where $i_1 \in S_{j} \setminus \left\{ i_0
					\right\}$. However, for any such $i_2$, the action of $h$ does not actually change
					this valuation since by Lemma \ref{lem:simpleRootsOperations} it is lowered by
					$1$ by the action of $h_{\alpha^{k^{j + i_0}}}$ but is raised by $1$ by the
					action of $h_{\alpha^{k^{j + i_1}}}$, because $j + i_1 - \left( j + i_0
					\right) \equiv i_1 - i_0 \equiv i_2 \pmod{m}$. This ends the step and the
					proof.
				\end{proof}
			\end{lemma}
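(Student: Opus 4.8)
The plan is to produce $h$ by iterated action of linear factors, each step strictly decreasing the total pole order carried by the orbit of $\alpha$. Since $\alpha^{k^m}=\alpha$, the points $\alpha^{k^i}$ run through the finite set $\{\alpha^{k^j}:0\le j<m\}$, so I would induct on the nonnegative integer $\sigma=-\sum_{i,l\,:\,v_{\alpha^{k^i}}(c_l)<0}v_{\alpha^{k^i}}(c_l)$. If $\sigma=0$ no term has a pole at any $\alpha^{k^i}$ and the sequence is already $\alpha^{k^i}$-calm for all $i$, giving the base case. The induction is sound because, for a root of unity $\alpha$, Lemma~\ref{lem:actionEffect} says every calmness $\clm_{(a_i)}(c_i)$ is invariant under the action of $\ck(z)$; hence both hypotheses---nonnegativity of all calmnesses and vanishing of all minimal ones---persist after each step, and it suffices to show that when $\sigma>0$ some polynomial action lowers $\sigma$.

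For the inductive step I would first isolate the worst orbit point. Put $v_{\min}(j)=\min_i v_{\alpha^{k^j}}(c_i)$, choose $j$ minimizing it (so $v_{\min}(j)<0$ because $\sigma>0$), and set $S_j=\{i:v_{\alpha^{k^j}}(c_i)=v_{\min}(j)\}$. This $S_j$ is nonempty by definition and proper since $m\notin S_j$: indeed $v_{\alpha^{k^j}}(c_m)$ is the one-term calmness $\clm_{\alpha^{k^j},(m)}(c_i)$, which is $\ge 0$ by hypothesis. My candidate is $h=\prod_{i_0\in S_j}\bigl(z-\alpha^{k^{j+i_0}}\bigr)$. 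By Lemma~\ref{lem:simpleRootsOperations} the factor $z-\alpha^{k^{j+i_0}}$ raises $v_{\alpha^{k^j}}(c_{i_0})$ by exactly $1$, so the full action of $h$ lifts each of the $|S_j|$ deepest valuations at $\alpha^{k^j}$ by one; the difficulty is to ensure nothing is spoiled elsewhere.

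That verification is the step I expect to be the main obstacle. By Lemma~\ref{lem:simpleRootsOperations} the only valuations $h$ can lower are those at the points $\alpha^{k^{j+i_0}}$ ($i_0\in S_j$), each by at most $1$, so I only need to guarantee $v_{\alpha^{k^{j+i_0}}}(c_{i_2})>0$ at every index $i_2$ whose valuation actually drops. This is exactly where the hypothesis that minimal calmnesses vanish does the work. Given $i_1\notin S_j$, I would take a sequence $(a_i)\in\minclm_{j,i_1}(c_i)$, for which $\clm_{(a_i)}(c_i)=0$, and splice in a detour: replace the entry $i_1$ by the pair $i_0$ followed by $i_2\equiv i_1-i_0\pmod m$. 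Because $i_0+i_2\equiv i_1\pmod m$ the detour leaves every later valuation point unchanged, so the new calmness is $v_{\alpha^{k^{j+i_0}}}(c_{i_2})+v_{\alpha^{k^j}}(c_{i_0})-v_{\alpha^{k^j}}(c_{i_1})$, and its nonnegativity yields $v_{\alpha^{k^{j+i_0}}}(c_{i_2})\ge v_{\alpha^{k^j}}(c_{i_1})-v_{\alpha^{k^j}}(c_{i_0})>0$, the strict inequality holding since $i_1\notin S_j$ whereas $i_0$ attains $v_{\min}(j)$. The boundary case $i_2=m-i_0$ I would treat instead with the length-two sequence $(i_0,m-i_0)$, whose nonnegative calmness again forces $v_{\alpha^{k^{j+i_0}}}(c_{m-i_0})>0$.

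It then remains to assemble the bookkeeping. For an index $i_2\equiv i_1-i_0\pmod m$ with $i_1\in S_j\setminus\{i_0\}$ the factor $z-\alpha^{k^{j+i_0}}$ lowers $v_{\alpha^{k^{j+i_0}}}(c_{i_2})$ by $1$ but $z-\alpha^{k^{j+i_1}}$ raises it by $1$ (as $(j+i_1)-(j+i_0)\equiv i_2$), so that valuation is unchanged; the index $i_2\equiv 0$, i.e. $i_2=m$, is untouched by Lemma~\ref{lem:simpleRootsOperations}; and every other $i_2$ is covered by the strict positivity just established, so its valuation stays $\ge 0$ after dropping by one. Consequently the action of $h$ raises $|S_j|$ of the deepest negative valuations while introducing no new negative ones, so $\sigma$ strictly decreases. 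Induction on $\sigma$ then terminates at $\sigma=0$, and the accumulated product $h$ is the required polynomial making $h^*(c_i)$ $\alpha^{k^i}$-calm for every $i\ge 0$.
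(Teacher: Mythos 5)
Your proposal is correct and follows essentially the same argument as the paper: the same induction on $\sigma$ using the invariance of calmnesses under the action (Lemma \ref{lem:actionEffect}), the same choice of $j$, $S_j$ and $h=\prod_{i_0\in S_j}h_{\alpha^{k^{j+i_0}}}$, the same splicing of $(i_0,i_2)$ into a minimal calmness sequence to obtain the inequalities \eqref{ieq:positiveColumns}, and the same cancellation bookkeeping for $i_1\in S_j\setminus\{i_0\}$. No gaps to report.
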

		 In the next two lemmas we will weaken the assumptions of Lemma
			\ref{lem:simpleRootsElimination}.

			\begin{lemma}\label{lem:nonzeroMinclmRootsElimination}
				Let the sequence $\left( c_i(z) \right)_{i =1}^{m}$, $0 \neq c_i\left( z \right)
				\in \ck\left(	z \right)$ be of length $m$ and contain no zero terms. In such
				a case, if for every $\left( a_i \right) \in \mathcal{A}$,	$\clm_{\left( a_i
				\right)}\left( c_i \right) \geq 0$, then there exists a polynomial
				$0 \neq h \in \ck\left[ z \right]$ such that the sequence $h^*
				\left( c_i\left( z \right) \right)$ is $\alpha^{k^i}$-calm for any $i
				\geq 0$.
				\begin{proof}
					Consider $\left( a_i \right) \in \minclm_{a,b}\left( c_i \right)$. If
					$\clm_{\left( a_i \right)}\left( c_i \right) \neq 0$, substitute the sequence
					$\left( c'_i \right)$ for $\left( c_i \right)$, where
					\begin{equation*}
						c'_i\left( z \right) = \begin{cases}
							\frac{c_i\left( z \right)}{\left( z - \alpha^{k^a} \right)^{
							\clm_{\left( a_i \right)}\left( c_i \right)}} & \textrm{ for } i=b\\
							c_i\left( z \right) & \textrm{ otherwise.}
						\end{cases}
					\end{equation*}
					The new sequence now satisfies $\clm_{\left( a_i \right)}\left(
					c'_i \right) = 0$ for all $a_i \in \minclm_{a,b}\left( c'_i \right)$ while
					still satisfying $\clm_{\left( a_i \right)}\left( c'_i \right) \geq 0$ for
					all $a_i \in \mathcal{A}$. We can repeat this procedure as long as for any
					$a \geq 0, b \geq 1$ we have $\clm_{\left( a_i \right)} \left( c_i
					\right) \neq 0$ for $\left( a_i \right) \in \minclm_{a,b}\left( c_i
					\right)$. Since all $c_i \neq 0$, we only need to repeat this procedure
					finitely many times. The sequence obtained after applying
					these procedures satisfies the assumptions of Lemma
					\ref{lem:simpleRootsElimination}. Therefore, there exists a polynomial $h$ such
					that $h^*\left( c'_i \right)$ is $\alpha^{k^i}$-calm for any
					$i \geq 0$. But $v_{\alpha^{k^i}}\left( c_i \right) \geq
					v_{\alpha^{k^i}}\left( c'_i \right)$, so $h^*\left( c_i \right)$ is also
					$\alpha^{k^i}$-calm.
				\end{proof}
			\end{lemma}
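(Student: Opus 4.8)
The plan is to reduce this statement to Lemma~\ref{lem:simpleRootsElimination}, whose extra hypothesis is that every minimal calmness is not merely nonnegative but actually equal to $0$. The idea is that whenever some minimal calmness is strictly positive, we can absorb its value into a single coefficient without destroying the nonnegativity of any calmness, and that only finitely many such absorptions are required before Lemma~\ref{lem:simpleRootsElimination} applies.

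First I would analyse a single absorption step. Suppose that for some $a \geq 0$, $b \geq 1$ there is a sequence $\left( a_i \right) \in \minclm_{a,b}\left( c_i \right)$ with $\clm_{\left( a_i \right)}\left( c_i \right) = s > 0$. Replace $c_b$ by $c_b / \left( z - \alpha^{k^a} \right)^{s}$ and leave every other coefficient fixed. Since $\left( z - \alpha^{k^a} \right)$ has a zero only at $\alpha^{k^a}$, this alters exactly the one digit $v_{\alpha^{k^a}}\left( c_b \right)$, lowering it by $s$. Consequently every calmness in $\mathcal{A}$ that contains this digit as a summand decreases by $s$, while all other calmnesses are unchanged. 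Because $s$ is by definition the \emph{minimal} value among the calmnesses containing $v_{\alpha^{k^a}}\left( c_b \right)$, each of those calmnesses was $\geq s$ and therefore remains $\geq 0$, while the ones achieving the minimum become exactly $0$. Hence the global hypothesis $\clm_{\left( a_i \right)}\left( c_i \right) \geq 0$ on all of $\mathcal{A}$ is preserved, and one more minimal calmness has been forced to vanish.

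Next I would check that this step is applied only finitely often. The assumption $c_i \neq 0$ guarantees that every digit $v_{\alpha^{k^a}}\left( c_i \right)$ is a finite integer, so all calmnesses are finite and the absorption is well defined. Moreover, since $\alpha^{k^a}$ depends only on $a \bmod m$ and $b$ ranges over $\left\{ 1, \dots, m \right\}$, there are only finitely many combinatorially distinct constraints of the form $\minclm_{a,b}$. A later absorption only lowers digits and hence can only lower calmnesses, so a constraint whose minimal calmness has already been brought down to $0$ stays at $0$: it cannot drop below $0$ by the preserved global hypothesis, nor rise by monotonicity. Thus each constraint is zeroed at most once and the process terminates, yielding a sequence $\left( c'_i \right)$ satisfying all the hypotheses of Lemma~\ref{lem:simpleRootsElimination}.

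Finally I would invoke Lemma~\ref{lem:simpleRootsElimination} to obtain a polynomial $0 \neq h \in \ck\left[ z \right]$ with $h^*\left( c'_i \right)$ being $\alpha^{k^i}$-calm for all $i \geq 0$, and transfer this back to the original sequence: since every replacement only lowered valuations, $v_{\alpha^{k^i}}\left( c_i \right) \geq v_{\alpha^{k^i}}\left( c'_i \right)$, and as $h^*$ multiplies the two sequences termwise by identical factors, $h^*\left( c_i \right)$ inherits the calmness of $h^*\left( c'_i \right)$. The step I expect to be the main obstacle is the bookkeeping in the absorption argument: verifying that lowering one digit shifts all and only the calmnesses containing that digit by the same amount $s$, and that the minimality of $s$ is precisely what keeps the remaining calmnesses nonnegative. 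The termination and the final comparison are then routine consequences of finiteness and monotonicity.
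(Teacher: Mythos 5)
Your proposal follows essentially the same route as the paper's proof: the identical absorption step (dividing $c_b$ by $\left( z - \alpha^{k^a} \right)^s$ where $s$ is the minimal calmness containing $v_{\alpha^{k^a}}\left( c_b \right)$, justified by minimality), the same reduction to Lemma~\ref{lem:simpleRootsElimination}, and the same transfer back to the original sequence via $v_{\alpha^{k^i}}\left( c_i \right) \geq v_{\alpha^{k^i}}\left( c'_i \right)$. If anything, your termination argument (finitely many constraints indexed by $\left( a \bmod m, b \right)$, each zeroed at most once by monotonicity and preserved nonnegativity) is more explicit than the paper's one-line appeal to $c_i \neq 0$.
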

			\begin{lemma}\label{lem:zeroTermsRootsElimination}
				Let the sequence $\left( c_i(z) \right)_{i =1}^{m}$, $c_i\left( z \right)
				\in \ck\left(	z \right)$ be of length $m$. In such a
				case, if for every $\left( a_i \right) \in \mathcal{A}$,	$\clm_{\left( a_i
				\right)}\left( c_i \right) \geq 0$, then there exists a polynomial
				$0 \neq h \in \ck\left[ z \right]$ such that the sequence $h^*
				\left( c_i\left( z \right) \right)$ is $\alpha^{k^i}$-calm for any $i
				\geq 0$.
				\begin{proof}
					Let $Z = \left\{ i \colon c_i\left( z \right) \neq 0 \right\}$ and $p =
					\sum_{i \in Z, 0 \leq j \leq m-1} \left| v_{\alpha^{k^j}}\left( c_i \right)
					\right|$. Consider	the sequence of rational functions $\left( c'_i \right)$ where
					\begin{equation*}
						c'_i\left( z \right) = \begin{cases}
							c_i\left( z \right) & \textrm{ if } i \in Z\\
							\prod_{j} \left( z - \alpha^{k^j} \right)^p & \textrm{ otherwise.}
						\end{cases}
					\end{equation*}

					We claim that $\left( c'_i \right)$ satisfies the assumptions of Lemma
					\ref{lem:nonzeroMinclmRootsElimination}. Since there are no zero terms in
					the sequence $\left( c'_i \right)$, it is enough to show that
					$\clm_{\left( a_i \right)}\left( c'_i \right) \geq 0$ for all $\left( a_i
					\right) \in \mathcal{A}$. Assume there exists $\left( a_i \right) \in
					\mathcal{A}$ such that $\clm_{\left( a_i \right)}\left( c'_i \right) < 0$.
					We can assume there are no sequences of shorter length than $\left( a_i \right)$
					satisfying this inequality. In this case, no two terms of $\left(
					\overline{a}_i \right)$ are equal modulo $m$. Indeed, if there existed $i_0
					\neq i_1$ such that $\overline{a}_{i_0} \equiv \overline{a}_{i_1} \pmod{m}$
					we could consider the sequence $\left( a_{i - i_0} \right)_{i = i_0}^{i_1 -
					1} \in A_{\alpha^{k^{\overline{a}_{i_0}}}}$ and either its
					calmness would be negative (contrary to the assumption that $\left( a_i
					\right)$ was the shortest sequence with this property) or nonnegative, but then the
					sequence $\left( a'_i \right)$ with
					\begin{equation*}
						a'_i = \begin{cases}
							a_i & \textrm{ for } i < i_0\\
							a_{i - i_0 + i_1} & \textrm{ for } i_0 \leq i
						\end{cases}
					\end{equation*}
					would be shorter than $\left( a_i \right)$ and also have a negative
					calmness. Now
					\begin{equation*}
						\clm_{\left( a_i \right)}\left( c'_i \right) = \sum_{a_i \in Z}
						v_{\alpha^{k^{\overline{a}_i}}}\left( c'_{a_i}\left( z \right) \right) +
						\sum_{a_i \not\in Z} v_{\alpha^{k^{\overline{a}_i}}}\left( c'_{a_i}\left( z
						\right) \right).
					\end{equation*}
					If the latter sum contains no terms, then $\clm_{\left( a_i \right)}\left(
					c'_i \right)$ is nonnegative since
					$\clm_{\left( a_i \right)}\left( c'_i \right) = \clm_{\left( a_i
					\right)}\left( c_i \right) \geq 0$. If the latter sum contains at least one term, then
					the former sum is necessarily at least $-p$ by the definition of $p$, so adding $p$ to it
					makes it nonnegative.

					Therefore $\left( c'_i \right)$ satisfies the assumptions of Lemma
					\ref{lem:nonzeroMinclmRootsElimination} and there exists a polynomial $h$ such
					that $h^*\left( c'_i \right)$ is $\alpha^{k^i}$-calm for any
					$i \geq 0$, but as in the previous proof $v_{\alpha^{k^i}}\left( c_i \right) \geq
					v_{\alpha^{k^i}}\left( c'_i \right)$, so $h^*\left( c_i \right)$ is also
					$\alpha^{k^i}$-calm.
				\end{proof}
			\end{lemma}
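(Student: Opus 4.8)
The plan is to deduce this from Lemma \ref{lem:nonzeroMinclmRootsElimination} by replacing every vanishing term of $\left( c_i \right)$ with a nonzero polynomial chosen so that the resulting sequence has no zero terms, still satisfies the nonnegative-calmness hypothesis, and dominates $\left( c_i \right)$ termwise in valuation (so that curing the poles of the new sequence cures those of the old). Concretely, I would set $Z = \left\{ i \colon c_i\left( z \right) \neq 0 \right\}$, put $p = \sum_{i \in Z,\, 0 \leq j \leq m-1} \left| v_{\alpha^{k^j}}\left( c_i \right) \right|$, and define $\left( c'_i \right)$ by $c'_i = c_i$ for $i \in Z$ and $c'_i\left( z \right) = \prod_{j} \left( z - \alpha^{k^j} \right)^{p}$ for $i \notin Z$. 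The inserted polynomial has valuation exactly $p$ at each $\alpha^{k^j}$, which should render the new terms harmless inside every calmness.

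The main step -- and the one I expect to be the principal obstacle -- is checking that $\left( c'_i \right)$ satisfies $\clm_{\left( a_i \right)}\left( c'_i \right) \geq 0$ for all $\left( a_i \right) \in \mathcal{A}$, so that Lemma \ref{lem:nonzeroMinclmRootsElimination} applies. I would argue by contradiction, choosing among all $\left( a_i \right) \in \mathcal{A}$ with negative calmness one of minimal length. The crux is the structural claim that no two of its partial sums $\overline{a}_i$ are congruent modulo $m$. Indeed, if $\overline{a}_{i_0} \equiv \overline{a}_{i_1} \pmod m$ with $i_0 < i_1$, then, since $\alpha^{k^a}$ depends only on $a$ modulo $m$, the block $\left( a_{i - i_0} \right)_{i = i_0}^{i_1 - 1}$ returns to its starting point and hence lies in $A_{\alpha^{k^{\overline{a}_{i_0}}}}$; by minimality its calmness is nonnegative, so excising it produces a strictly shorter sequence in $\mathcal{A}$ whose calmness is at most the original and therefore still negative -- a contradiction.

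With residue-distinctness in hand, I would split
\begin{equation*}
	\clm_{\left( a_i \right)}\left( c'_i \right) = \sum_{a_i \in Z} v_{\alpha^{k^{\overline{a}_i}}}\left( c'_{a_i} \right) + \sum_{a_i \notin Z} v_{\alpha^{k^{\overline{a}_i}}}\left( c'_{a_i} \right).
\end{equation*}
Each term of the second sum equals $p$ by construction. For the first sum, distinctness of the residues $\overline{a}_i \bmod m$ guarantees that no pair $\left( a_i, \overline{a}_i \bmod m \right)$ occurs twice, so each quantity $v_{\alpha^{k^j}}\left( c_{i} \right)$ with $i \in Z$ is counted at most once, and the sum is therefore at least $-p$ by the definition of $p$. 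Hence if the second sum is nonempty the total is nonnegative, while if it is empty the calmness coincides with $\clm_{\left( a_i \right)}\left( c_i \right) \geq 0$; either way negativity is contradicted, establishing the hypothesis for $\left( c'_i \right)$.

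It then remains to apply Lemma \ref{lem:nonzeroMinclmRootsElimination} to $\left( c'_i \right)$, obtaining $0 \neq h \in \ck\left[ z \right]$ with $h^*\left( c'_i \right)$ being $\alpha^{k^i}$-calm for every $i \geq 0$, and to transfer this back. Since $c'_i$ agrees with $c_i$ for $i \in Z$ and the vanishing terms were replaced by honest polynomials, we have $v_{\alpha^{k^j}}\left( c_i \right) \geq v_{\alpha^{k^j}}\left( c'_i \right)$ for every term and every $j$; because $h^*$ multiplies the $i$-th term of either sequence by the same factor $h\left( z^{k^i} \right)/h\left( z \right)$, this inequality is preserved after acting by $h$. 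Consequently $h^*\left( c_i \right)$ is $\alpha^{k^i}$-calm as well, which completes the proof.
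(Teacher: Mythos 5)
Your proposal is correct and follows the paper's own proof essentially verbatim: the same choice of $Z$ and $p$, the same replacement sequence $\left( c'_i \right)$, the same minimal-length argument showing the partial sums $\overline{a}_i$ are distinct modulo $m$, the same splitting of the calmness over $a_i \in Z$ and $a_i \notin Z$, and the same transfer back via $v_{\alpha^{k^j}}\left( c_i \right) \geq v_{\alpha^{k^j}}\left( c'_i \right)$. The only difference is that you spell out slightly more explicitly why residue-distinctness bounds the first sum by $-p$, which the paper leaves implicit.
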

			\begin{proof}[Proof of Proposition \ref{prop:rootsElimination}]
				Let $s_{i, j} = \min_{r\geq 0} v_{\alpha^{k^j}}\left( c_{r m + i} \right)$. Consider the sequence of rational functions $\left( c'_i
				\right) = \left( \prod_j \left( z - \alpha^{k^j} \right)^{s_{i, j}} \right)_{i
				= 1}^{m}$. This sequence satisfies the assumptions of Lemma
				\ref{lem:zeroTermsRootsElimination} since $\clm_{\left( a_i \right)}\left(
				c'_i \right) = \clm_{\left( a'_i \right)}\left( c_i \right) \geq 0$ for
				$\left( a'_i \right)$ chosen in such a way that $a'_i = r m + a_i$ for $r$
				chosen so that $v_{\alpha^{k^j}}\left( c_{a_i} \right) = s_{a_i, j}$. We therefore have a polynomial $h$
				such that $h^*\left( c'_i \right)$ is $\alpha^{k^i}$-calm for any
				$i \geq 0$. To see that also $h^*\left( c_i \right)$ is $\alpha^{k^i}$-calm,
				note that by Lemma \ref{lem:simpleRootsOperations} this action
				changes valuations at $\alpha^{k^j}$ of $c_{i + r m}$ by the same amount
				for any $r \in \N$. Since $\left( c'_i \right)$ was constructed in such a way
				that $v_{\alpha^{k^j}}\left( c_{r m + i} \right) \geq v_{\alpha^{k^j}}\left(
				c'_i \right)$, this finishes the proof.
			\end{proof}
		\subsection{The remaining cases}
			\begin{proof}[Proof of Theorem \ref{thm:precalmness}]
				First note that precalm sequences clearly admit only nonnegative calmnesses.
				Indeed, calm sequences trivially have nonnegative calmnesses, and so do
				precalm sequences by Lemma \ref{lem:actionEffect}.
				The converse implication is slightly more challenging.

				Assume $\left( c_i \right)_{i = 1}^n$ is a sequence such that all the
				calmnesses $\clm_{\alpha, \left( a_i \right)}$ are nonnegative. Then, by
				applying Proposition \ref{prop:rootsElimination} to all anxious
				roots of unity at which terms of $\left( c_i \right)$ have poles, we find a
				polynomial $0 \neq h_0 \in \ck\left[ z \right]$ such that $h_0^*\left( c_i
				\right)$ has no poles at anxious roots of unity. Since $h_0^*\left( c_i
				\right)$ has the same calmnesses as $\left( c_i \right)$ (by Lemma
				\ref{lem:actionEffect}) we have reduced the proof to the case when
				$c_i$ have no poles at anxious roots of unity. We assume this henceforth.

				We will construct a polynomial $0 \neq h \in \ck\left[ z \right]$ such that
				$h^*\left( c_i \right)$ has no poles at anxious numbers $\alpha$ in a number
				of steps. At each step, we will construct a polynomial $\widetilde{h}\in
				\ck[z]$ and substitute $\widetilde{h}^*\left(	c_i \right)$ for the original
				sequence. We will show that this substitution perserves the assumptions and
				that the procedure finishes after a finite number of steps. We will also
				show that the procedure stops only when the sequence is calm. The desired
				polynomial $h$ will be the product of all the polynomials $\widetilde{h}$
				constructed in this way.

				Let $\alpha \in \ck$ be an anxious number such that there exists a
				$c_{i_0}$ such that $v_\alpha \left( c_{i_0}(z) \right) < 0$. In particular
				$\alpha$ is not a root of unity. If no such
				$\alpha$ exists, the sequence is calm and the procedure stops. The
				assumptions of the theorem imply that for the constant sequence $(i_0) \in
				A_\alpha$ the calmness $\clm_{(i_0)} (c_i) \geq 0$. Therefore, since
				$v_\alpha \left( c_{i_0}(z) \right) < 0$, there	exists $j$ such that
				$v_{\alpha^{k^{j i_0}}} \left( c_{i_0}(z) \right) > 0$. Let $j_0$ be the
				smallest such $j$ and let $\widetilde{h}(z) = \prod_{i = 1}^{j_0} \left( z
				- \alpha^{k^{i_0 i}} \right)$.

				By Lemma \ref{lem:actionEffect}, for any anxious $\beta$ and $(b_i) \in
				A_\beta$ we have $\clm_{(b_i)} \widetilde{h}^* (c_i) = \clm_{(b_i)} (c_i) -
				v_\beta\left( \widetilde{h}(z) \right)$. By	the form of $\widetilde{h}$, the
				term $v_\beta\left( \widetilde{h}\left( z \right) \right)$ is nonzero only
				when $\beta = \alpha^{k^{i_0 l}}$ for some integer $1
				\leq l \leq j_0$. Let $(b'_i) \in A_\alpha$ be defined as
				\begin{equation*}
					b'_i =
					\begin{cases}
						i_0 & \textrm{ if } i \leq l. \\
						b_{i - l} & \textrm{ otherwise.}
					\end{cases}
				\end{equation*}
				By the choice of $j_0$, we have $\sum_{i = 0}^{l - 1} v_{\alpha^{k^{i_0 i}}}\left(
				c_{i_0} (z) \right) \leq -1 = -v_{\alpha^{k^{i_0 l}}}\left( \widetilde{h}(z)
				\right)$. Using this, it is immiediate to see that
				\begin{equation*}
					\clm_{(b_i)} (c_i) - v_{\alpha^{k^{i_0 l}}}\left( \widetilde{h}(z) \right)
					\geq \clm_{(b_i)} (c_i) + \sum_{i = 0}^{l - 1} v_{\alpha^{k^{i_0 i}}}\left(
					c_{i_0}(z) \right) = \clm_{(b'_i)} (c_i) \geq 0.
				\end{equation*}
				Therefore, the sequence $\widetilde{h}^*\left( c_i \right)$ still satisfies
				the assumptions of the theorem and we can continue the procedure.

				At the beginning of the procedure fix the set of anxious numbers
				\begin{equation*}
					\mathcal{B} = \left\{ \alpha_r \colon \exists_i
					v_{\alpha_r}\left( c_i\left( z \right) \right) < 0 \textrm{ and } \forall_{j > 0}
					\left( \alpha^{k^j} = \alpha_r \Rightarrow \forall_i v_{\alpha^{k^j}}\left(
					c_i(z) \right) \geq 0 \right) \right\}.
				\end{equation*}
				These are anxious numbers $\alpha_r$ at which at least one $c_i$ has a pole
				and $\alpha_r$ is not a $k^j$$^\mathrm{th}$ power of another pole of some
				$c_i$. Since all $c_i$ are rational functions and there are only finitely
				many of them, this set is finite. Furthermore among the calmnesses
				$\clm_{(a_i)} (c_i)$, $(a_i) \in A_{\alpha_r^{k^l}}$, $\alpha_r \in
				\mathcal{B}$, $l \geq 0$ only finitely many are nonzero. It remains to note
				that each step of the procedure lowers at least one of those calmnesses by
				one and does not increase the remaining ones. This can only happen finitely many times, since they are all
				nonnegative. Specifically, the $\alpha$ chosen in each step of the procedure is
				equal to $\alpha_r^{k^l}$ for some $\alpha_r \in \mathcal{B}$ and $l \geq
				0$, so by Lemma \ref{lem:actionEffect} all the corresponding calmnesses are lowered.

				The two above paragraphs prove that the procedure must end in finitely many
				steps, which ends the proof.
			\end{proof}
	\section{Becker's conjecture}\label{sect:conjectures}
		This whole work was inspired by a conjecture of Becker \cite[Corollary 1,
		remark 2]{Becker92}. In this section we investigate some alternative
		formulations of that conjecture.
		\begin{conjecture}[Becker]\label{conj:Becker}
			A power series $f \in \ck\left[ \left[ z \right] \right]$ is $k$-regular
			if and only if there exists a $k$-regular rational function $0 \neq h\left( z \right)
			\in \ck\left( z \right)$ such that $\frac{f\left(
			z \right)}{h\left( z \right)}$ satisfies a Mahler equation with polynomial
			coefficients.
		\end{conjecture}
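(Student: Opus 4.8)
The plan is to prove the two implications separately, since the reverse implication follows quickly from the ring structure of regular series whereas the forward implication is the genuinely hard part.

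For the reverse implication, suppose $0 \neq h \in \ck(z)$ is a regular rational function for which $f/h$ satisfies a Mahler equation with polynomial coefficients. Then $f/h$ is $k$-Becker, and by the polynomial case of Theorem \ref{thm:DumasReguralityCriterion} (Becker's original result) it is regular. Since the $k$-regular power series form a ring \cite[Corollary 3.2]{AlloucheShallit92} and $h$ is regular by hypothesis, the product $f = h \cdot (f/h)$ is again regular. No new machinery is needed here.

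For the forward implication I would exploit the compatibility between dividing $f$ by a rational function and the action $h^*$ on coefficient sequences. Every regular series is Mahler, so $f$ satisfies some equation $f(z) = \sum_{i=1}^{n} c_i(z) f(z^{k^i})$. Writing $F = f/h$, substituting $f = hF$, and dividing by $h(z)$ shows that $F$ satisfies the Mahler equation with coefficient sequence $h^*(c_i) = \left( \frac{h(z^{k^i})}{h(z)} c_i(z) \right)$. Hence producing a regular rational $h$ with $f/h$ $k$-Becker is the same as producing a polynomial $h$ for which $h^*(c_i)$ is a sequence of polynomials (polynomials being regular rational functions). If the sequence $(c_i)$ is $k$-precalm, this can be done: precalmness yields $h_1$ making $h_1^*(c_i)$ calm, Lemma \ref{lem:prepolynomial} yields $h_2$ making $h_2^*(h_1^*(c_i))$ a sequence of polynomials, and since the action composes as $(h_1 h_2)^* = h_2^* \circ h_1^*$, the polynomial $h = h_1 h_2$ works.

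The forward implication is therefore reduced to the following statement: every $k$-regular $f$ admits \emph{some} Mahler equation whose coefficient sequence is $k$-precalm. By Theorem \ref{thm:precalmness} this is equivalent to finding an equation for $f$ all of whose calmnesses $\clm_{\alpha,(a_i)}(c_i)$ are nonnegative, which is exactly the converse of Theorem \ref{thm:DumasReguralityCriterion} (the assertion that $k$-regular series are $k$-Dumas, Conjecture \ref{conj:fullEquivalence}). This converse is the main obstacle: calm coefficients are known to force regularity, but extracting nonnegativity of all calmnesses from regularity alone is the crux and is not available in general. Accordingly, the argument above is unconditional only on the subclass of Mahler series for which this converse is established in Section \ref{sect:criteria}.
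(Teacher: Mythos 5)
Your proposal matches the paper's own treatment of this statement: since it is a conjecture, you (like the paper) prove only the reverse implication, using Becker's polynomial case of Theorem \ref{thm:DumasReguralityCriterion} together with the ring structure of $k$-regular series, and you reduce the forward implication via the action $h^*$, Theorem \ref{thm:precalmness}, and Lemma \ref{lem:prepolynomial} to the converse of Dumas's theorem (Conjecture \ref{conj:fullEquivalence}), which remains open in general and is established only for the subclass handled in Section \ref{sect:criteria}. This is precisely the chain of reductions the paper itself carries out in Sections \ref{sect:conjectures} and \ref{sect:criteria} (Lemma \ref{lem:equivPropForCoeffs}, Theorem \ref{thm:fullEquivalence}, and Proposition \ref{prop:weakBecker}), so your argument is correct and leaves exactly the gap that the paper leaves.
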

		It is useful to note that $k$-regular rational functions are exactly rational
		functions with poles only at $0$ and roots of unity, as shown in \cite[Theorem
		3.3]{AlloucheShallit92}. One implication in Conjecture \ref{conj:Becker}
		(namely, that if such a function exists, then $f$ is regular) has been shown
		by Becker \cite[Corollary 1]{Becker92} -- this is in fact what
		inspired the conjecture. By analogy Theorem \ref{thm:DumasReguralityCriterion}
		might inspire an alternative version of Becker's conjecture.
		\begin{conjecture}[Becker, alternative version]\label{conj:altBecker}
			A power series $f \in \ck\left[ \left[ z \right] \right]$ is $k$-regular
			if and only if there exists a polynomial $0 \neq h\left( z \right)
			\in \ck\left[ z \right]$ such that $\frac{f\left(z \right)}{h\left( z
			\right)}$ satisfies a Mahler equation with a $k$-calm sequence of coefficients.
		\end{conjecture}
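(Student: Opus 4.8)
The plan is to split Conjecture \ref{conj:altBecker} into its two implications. The implication ``such an $h$ exists $\Rightarrow$ $f$ is regular'' I would prove outright, whereas the reverse implication is the genuine content; I would not attack it head-on but instead reduce it to the original Conjecture \ref{conj:Becker} by showing the two formulations equivalent.

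First I would dispatch the easy implication. Setting $g = f/h$, the hypothesis says that the power series $g$ satisfies a Mahler equation with a $k$-calm coefficient sequence, so Theorem \ref{thm:DumasReguralityCriterion} makes $g$ regular. As $h$ is a polynomial it is regular, and since regular power series form a ring \cite[Corollary 3.2]{AlloucheShallit92}, the product $f = hg$ is regular. This is the exact analogue of the ``(ii)$\Rightarrow$(i)'' step of Theorem \ref{thm:orderOneCriteria} and needs nothing new.

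The device driving the rest is the observation that if $f$ satisfies $f = \sum_{i=1}^{n} c_i(z) f(z^{k^i})$, then $f/h$ satisfies the same equation with coefficient sequence $h^*(c_i)$. Thus, coefficientwise, Conjecture \ref{conj:altBecker} asserts that $f$ is regular exactly when some Mahler equation it satisfies has a $k$-precalm coefficient sequence, while Conjecture \ref{conj:Becker} asserts the same with ``$k$-precalm'' replaced by ``becomes a polynomial sequence under a $k$-regular rational action''. The ``if'' halves of both conjectures are already theorems (\cite[Corollary 1]{Becker92} and the paragraph above), so each conjecture is equivalent to its ``only if'' half, and I would prove the conjectures equivalent by showing these two coefficient conditions coincide for every $f$. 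One direction is immediate from Lemma \ref{lem:prepolynomial}: from a polynomial $h_1$ with $h_1^*(c_i)$ calm it produces a polynomial $h_2$ with $(h_2 h_1)^*(c_i) = h_2^*(h_1^*(c_i))$ a polynomial sequence, and $h_2 h_1$ is a (polynomial, hence $k$-regular rational) witness. For the converse I would use Lemma \ref{lem:actionEffect}: if a $k$-regular rational $h$ makes $h^*(c_i)$ a polynomial sequence, then $h^*(c_i)$ is calm and so has nonnegative calmnesses; since $h$ has poles only at $0$ and roots of unity we have $v_\alpha(h) \geq 0$ at every anxious $\alpha$ that is not a root of unity, and Lemma \ref{lem:actionEffect} then carries nonnegativity back to every calmness of $(c_i)$, so $(c_i)$ is precalm by Theorem \ref{thm:precalmness}.

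The hard part, and the real obstacle, is that this programme yields only the equivalence of Conjecture \ref{conj:altBecker} with Conjecture \ref{conj:Becker}, not either conjecture itself: the ``only if'' direction --- that regularity of $f$ forces precalmness of its Mahler coefficients --- remains open in general. The obstruction is concentrated at anxious roots of unity, where Lemma \ref{lem:actionEffect} shows the polynomial action leaves the calmnesses unchanged, so one cannot simply cancel offending poles there; this is precisely the regime the calmness machinery of Section \ref{sect:calmnesses} is built to control, and it explains why the full converse can be secured only for the restricted subclass of Theorem \ref{thm:fullEquivalence}.
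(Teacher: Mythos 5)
Your proposal is correct and takes essentially the same route as the paper: the paper also leaves this statement as a conjecture, records the easy implication via Theorem \ref{thm:DumasReguralityCriterion} and the ring property of regular series, and proves Conjectures \ref{conj:Becker}--\ref{conj:finBecker} pairwise equivalent by reducing to a coefficient-level statement (Lemma \ref{lem:equivPropForCoeffs}) proved with exactly your ingredients, namely Lemma \ref{lem:prepolynomial} in one direction and Lemma \ref{lem:actionEffect} together with Theorem \ref{thm:precalmness} in the other. The only cosmetic difference is bookkeeping: writing $h = p/q$, the paper shows $p^*\left( c_i \right) = q^*\left( h^*\left( c_i \right) \right)$ is precalm and then assembles the polynomial witness $h'qh$, whereas you apply Lemma \ref{lem:actionEffect} to carry the nonnegative calmnesses of $h^*\left( c_i \right)$ back to $\left( c_i \right)$ itself and invoke Theorem \ref{thm:precalmness} directly.
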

		Again, one of the implications in this conjecture follows immiediately from
		Theorem \ref{thm:DumasReguralityCriterion} and the fact that regular power
		series form a ring.

		As it turns out, both these conjectures are equivalent to the following,
		seemingly stronger, conjecture.
		\begin{conjecture}[Becker, final version]\label{conj:finBecker}
			A power series $f \in \ck\left[ \left[ z \right] \right]$ is $k$-regular
			if and only if there exists a polynomial $0 \neq h\left( z \right)
			\in \ck\left[ z \right]$ such that $\frac{f\left(
			z \right)}{h\left( z \right)}$ satisfies a Mahler equation with polynomial
			coefficients.
		\end{conjecture}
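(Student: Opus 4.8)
The plan is to establish that Conjectures \ref{conj:Becker}, \ref{conj:altBecker} and \ref{conj:finBecker} are mutually equivalent. In each of the three the backward implication -- if the required $h$ exists, then $f$ is regular -- is already known (from Becker's result for Conjecture \ref{conj:Becker}, from Theorem \ref{thm:DumasReguralityCriterion} for Conjecture \ref{conj:altBecker}, and as a special case of either for Conjecture \ref{conj:finBecker}, always using that regular power series form a ring). Hence each conjecture is equivalent to its own forward implication, and it suffices to compare those. I would first record the two trivial directions Conjecture \ref{conj:finBecker} $\Rightarrow$ Conjecture \ref{conj:altBecker} and Conjecture \ref{conj:finBecker} $\Rightarrow$ Conjecture \ref{conj:Becker}: a polynomial has no poles, so a polynomial coefficient sequence is automatically $k$-calm, and a polynomial is in particular a $k$-regular rational function; thus the polynomial witness $h$ from Conjecture \ref{conj:finBecker} serves verbatim in the other two conjectures. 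It then remains to prove the two converses.

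For Conjecture \ref{conj:altBecker} $\Rightarrow$ Conjecture \ref{conj:finBecker} I would use the same observation already exploited for order one in the proof of Theorem \ref{thm:orderOneCriteria}: if $F$ satisfies $F(z) = \sum_{i=1}^{n} c_i(z) F(z^{k^i})$, then $F/h_1$ satisfies the same equation with coefficient sequence $h_1^*(c_i)$. Given a polynomial $h$ with $f/h$ satisfying a $k$-calm Mahler equation, Lemma \ref{lem:prepolynomial} produces a polynomial $h_1$ for which $h_1^*$ turns that calm coefficient sequence into a sequence of polynomials. Consequently $f/(h h_1)$ is $k$-Becker and $h h_1$ is a polynomial, which is exactly the assertion of Conjecture \ref{conj:finBecker}.

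The substantial implication is Conjecture \ref{conj:Becker} $\Rightarrow$ Conjecture \ref{conj:finBecker}, where I must replace a $k$-regular \emph{rational} denominator $h$ by a polynomial one. Writing $h = p/q$ (and, after clearing any pole at $0$ by a power of $z$, assuming $h \in \ck[[z]]$), the function $h$ satisfies the order one equation $h(z) = \frac{p(z) q(z^k)}{q(z) p(z^k)} h(z^k)$. Since $h$ is $k$-regular, Theorem \ref{thm:orderOneCriteria} shows this coefficient is precalm, so some polynomial $h_2$ makes $h_2^*$ of it $k$-calm, and Lemma \ref{lem:prepolynomial} applied to a length-one sequence then yields a polynomial $h_3$ turning the coefficient into a polynomial $c''$. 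In other words $h = P_0 B$ with $P_0 = h_2 h_3$ a polynomial and $B := h/P_0$ an order one Becker series, $B(z) = c''(z) B(z^k)$ with $c'' \in \ck[z]$. Now let $g := f/h$ be the $k$-Becker series supplied by Conjecture \ref{conj:Becker}, satisfying $g(z) = \sum_{i=1}^{n} a_i(z) g(z^{k^i})$ with $a_i \in \ck[z]$, and set $u := B g = f/P_0$. Iterating the order one relation gives $B(z) = \left( \prod_{l=0}^{i-1} c''\left( z^{k^l} \right) \right) B(z^{k^i})$, so multiplying the equation for $g$ by $B(z)$ converts every cross term into a pure term in $u$:
\begin{equation*}
	u(z) = \sum_{i=1}^{n} a_i(z) \left( \prod_{l=0}^{i-1} c''\left( z^{k^l} \right) \right) u\left( z^{k^i} \right).
\end{equation*}
Each coefficient is a product of polynomials, hence a polynomial, so $u = f/P_0$ is $k$-Becker with polynomial denominator $P_0$, establishing Conjecture \ref{conj:finBecker}.

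The heart of the argument is this telescoping step: peeling off the order one factor $B$ is precisely what keeps the new coefficients polynomial and lets me sidestep the general (and considerably messier) fact that a product of two Mahler functions is Mahler with controlled poles. The main obstacle I expect is the bookkeeping at the calm prime $0$ -- the polynomials $P_0$, $h_2$, $h_3$ may carry factors of $z$, so I must check that $u = f/P_0$ is a genuine power series rather than a Laurent series and that $B$ is a bona fide power series solution. Since $0$ is always $k$-calm, this is routine and can be arranged by absorbing the appropriate power of $z$ into $P_0$, but it needs to be carried out carefully so that every intermediate object lies in $\ck[[z]]$.
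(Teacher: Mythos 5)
Your proof is correct, and its skeleton matches the paper's: both reduce the statement to the mutual equivalence of Conjectures \ref{conj:Becker}, \ref{conj:altBecker} and \ref{conj:finBecker}, both treat the implications out of Conjecture \ref{conj:finBecker} as trivial (polynomials are $k$-calm and $k$-regular), and both prove Conjecture \ref{conj:altBecker} $\Rightarrow$ Conjecture \ref{conj:finBecker} by the same appeal to Lemma \ref{lem:prepolynomial}. The genuine divergence is in the remaining, substantial implication. The paper never touches the series $f$: it phrases everything as a statement about the coefficient sequence (Lemma \ref{lem:equivPropForCoeffs}), writes $h = p/q$, observes via Lemma \ref{lem:actionEffect} that $p^*\left( c_i \right) = q^*\left( h^*\left( c_i \right) \right)$ still has nonnegative calmnesses (the zeros of $q$ sit at $0$ and roots of unity, which never affect calmnesses), and then invokes Theorem \ref{thm:precalmness} --- the deepest result of the paper --- to conclude precalmness. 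You instead factor the witness itself, $h = P_0 B$ with $P_0 \in \ck\left[ z \right]$ and $B$ an order-one Becker series, and telescope the order-one relation through the equation for $f/h$; this bypasses Theorem \ref{thm:precalmness} completely and needs only the order-one theory of Section \ref{sect:orderOne}. In fact your route can be made lighter still: you need neither the regularity of $h$ nor the Cartier-operator direction of Theorem \ref{thm:orderOneCriteria}, because the coefficient $h\left( z \right)/h\left( z^k \right)$ has nonnegative calmnesses by a direct telescoping computation using Lemma \ref{lem:valuationUnderMahler} (the calmness equals $v_\alpha\left( h \right) \geq 0$ at anxious non-roots of unity and $0$ at anxious roots of unity), so Lemma \ref{lem:precalm} alone produces your $h_2$. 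The price of your approach is exactly the bookkeeping at $z = 0$ that you flag: Lemma \ref{lem:prepolynomial} may insert factors of $z$ into $P_0$, so $f/P_0$ is a priori only a Laurent series, and your suggested fix of absorbing a power of $z$ is not automatic, since multiplying a solution by $z^m$ changes the coefficients by $z^{m\left( 1 - k^i \right)}$ and destroys their polynomiality. This is, however, not a gap relative to the paper: the paper's own construction ends with the same Lemma \ref{lem:prepolynomial} and hence with a polynomial witness that may equally vanish at $0$, and the paper is silent on whether $f/h$ is required to lie in $\ck\left[ \left[ z \right] \right]$ rather than $\ck\left( \left( z \right) \right)$; read as identities of formal Laurent series, which is the reading the paper itself uses, both proofs are complete.
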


		\begin{proof}[Proof of equivalence of Conjectures 1-3]
			Since all the conjectures claim an equivalence of $k$-regularity and a
			certain property of a finite sequence of rational functions $c_i\left( z
			\right)$, it suffices to prove the following lemma.
			\begin{lemma}\label{lem:equivPropForCoeffs}
				Let $\left( c_i\left( z \right) \right)_{i = 1}^n$ be a sequence of
				rational functions. The following conditions are equivalent:
				\begin{enumerate}[(i)]
					\item\label{it:Becker} There exists a rational function $0 \neq h\in
						\ck\left( z \right)$ with poles at most at zero and roots of unity and
						such that $h^*\left( c_i \right)$ is a sequence of polynomials.
					\item\label{it:altBecker} There exists a polynomial $0 \neq h\in \ck\left[ z
						\right]$ such that $h^*\left( c_i \right)$ is $k$-calm.
					\item\label{it:finBecker} There exists a polynomial $0 \neq h\in \ck\left[ z
						\right]$ such that $h^*\left( c_i \right)$ is a sequence of polynomials.
				\end{enumerate}
			\end{lemma}
			\begin{proof}
				The implications \pointref{it:finBecker}$\Rightarrow$\pointref{it:Becker} and
				\pointref{it:finBecker}$\Rightarrow$\pointref{it:altBecker} are obvious, since
				polynomials are $k$-calm and have no poles.
				\begin{description}
					\item[\pointref{it:altBecker}$\Rightarrow$\pointref{it:finBecker}] Choose a
						polynomial $0 \neq h\left( z \right) \in \ck\left[ z \right]$ such that
						the sequence $h^*\left( c_i\left( z \right) \right)$ is $k$-calm. By
						Lemma \ref{lem:prepolynomial}, there exists a polynomial $0 \neq h'\left(
						z \right)\in \ck\left[ z \right]$ such that $\left( h'h \right)^*\left(
						c_i\left( z \right) \right) = h'^*\left( h^*\left( c_i\left(
						z \right) \right) \right)$ is a sequence of polynomials.
					\item[\pointref{it:Becker}$\Rightarrow$\pointref{it:altBecker}] 
						Choose a rational function $h\left( z	\right) = \frac{p\left( z
						\right)}{q\left( z \right)}$ where $p,q \in \ck\left[ z \right]$ are
						polynomials and $q$ has roots only at zero and roots of unity and such that
						$h^*\left( c_i \right)$ is a sequence of polynomials. It is sufficient to
						show that $p^*\left( c_i \right) = q^*\left( h^*\left( c_i \right)  \right)$ is
						$k$-precalm, because then we would have a polynomial $h'\left( z
						\right) \in \ck\left[ z \right]$ such that $h'^*\left( q^*\left( h^*\left(
						c_i \right) \right) \right)$ is $k$-calm, and therefore
						the polynomial $h'qh$ satisfies the condition of \pointref{it:altBecker}.
						
						To show that $q^*\left( h^*\left( c_i \right) \right)$ is $k$-precalm, we
						note that $h^*\left( c_i \right)$ is a sequence of polynomials, and thus
						all its $k$-calmnesses are nonnegative. By Lemma \ref{lem:actionEffect},
						the action of $q$ on this sequence will not change the $k$-calmnesses,
						because $q$ has nonzero valuation only at $k$-anxious roots of unity. Thus
						$q^*\left( h^*\left( c_i \right) \right)$ is precalm by Theorem
						\ref{thm:precalmness}. \qedhere
				\end{description}
			\end{proof}
			\noqedhere
		\end{proof}
	\section{Naive version of Becker's conjecture}\label{sect:naive}
		After looking at Theorems \ref{thm:orderOneCriteria} and
		\ref{thm:precalmness}, one might be tempted to propose the following naive
		version of Becker's conjecture.
		\begin{conjecture}[Becker, naive version]\label{conj:naive}
			Let $f\left( z \right) \in \ck\left[ \left[ z \right] \right]$ be a power
			series satisfying the Mahler equation
			\begin{equation*}
				f\left( z \right) = \sum_{i = 1}^{n} c_i\left( z \right) f\left(
				z^{k^i} \right).
			\end{equation*}
			Then $f$ is regular if and only if the sequence of coefficients
			$\left( c_i \right)$ is precalm.
		\end{conjecture}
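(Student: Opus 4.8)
The plan is to treat the two implications separately, as they are of entirely different character. For the direction that precalmness of $(c_i)$ implies regularity of $f$, I would argue exactly as in part (ii)$\Rightarrow$(i) of Theorem~\ref{thm:orderOneCriteria}. By definition of precalmness there is a polynomial $0 \neq h \in \ck[z]$ with $h^*(c_i)$ calm. Writing $f(z^{k^i}) = h(z^{k^i})\,g(z^{k^i})$ for $g(z) = f(z)/h(z)$ and dividing the Mahler equation by $h(z)$, one finds that $g$ satisfies
\[
	g(z) = \sum_{i = 1}^{n} \frac{h(z^{k^i})}{h(z)}\,c_i(z)\,g(z^{k^i}) = \sum_{i = 1}^{n} h^*(c_i)(z)\,g(z^{k^i}),
\]
a Mahler equation whose coefficients form a calm sequence. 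Theorem~\ref{thm:DumasReguralityCriterion} gives that $g$ is regular, and since $f = h\,g$ with $h$ a polynomial and regular series form a ring, $f$ is regular. This half introduces no new difficulty beyond the order-one case.

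The converse---that regularity of $f$ forces $(c_i)$ to be precalm---is where I expect the statement to fail, and this is the crux. In the order-one case the coefficient $c_1 = f(z)/f(z^k)$ is an \emph{invariant} of $f$, so precalmness of $c_1$ is genuinely a property of $f$; this is precisely what made the contrapositive argument (i)$\Rightarrow$(iii) of Theorem~\ref{thm:orderOneCriteria} available. For order $n \geq 2$ this canonicity disappears: if $f$ satisfies $Mf = 0$ with $M \in 1 + \ck(z)[\Delta_k]\Delta_k$, it also satisfies $NMf = 0$ for every $N \in 1 + \ck(z)[\Delta_k]\Delta_k$, and left multiplication by $N$ alters the coefficient sequence. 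Thus precalmness is a property of the chosen \emph{equation}, not of $f$ itself, and there is no reason a fixed regular $f$ should have precalm coefficients in every Mahler equation it satisfies.

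Accordingly, instead of proving the converse I would set out to refute it by constructing a counterexample. I would begin with a regular series $f$ satisfying a Becker equation $Mf = 0$ with polynomial (hence calm, hence precalm) coefficients, and then left-multiply $M$ by a suitable operator $N = 1 + d(z)\Delta_k \in 1 + \ck(z)[\Delta_k]\Delta_k$ chosen to manufacture a net pole at some anxious point $\alpha$ along its orbit $\alpha, \alpha^{k}, \alpha^{k^2}, \dots$ under $z \mapsto z^k$. The resulting coefficient sequence $(c_i')$ then gives the candidate: $f$ is still regular, while $(c_i')$ should fail to be precalm. To certify the failure I would invoke the numerical criterion of Theorem~\ref{thm:precalmness}, exhibiting an explicit $(a_i) \in A_\alpha$ for which $\clm_{\alpha, (a_i)}(c_i') < 0$.

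The hard part will be choosing $N$---equivalently the rational function $d(z)$---so that the poles introduced by left multiplication genuinely accumulate into a strictly negative calmness that \emph{no} action $h^*$ by a polynomial can repair; otherwise the apparent obstruction would be spurious and $(c_i')$ would still be precalm. Checking this amounts to a bookkeeping of $(z - \alpha)$-adic valuations along the orbit of $\alpha$, transported by Lemma~\ref{lem:valuationUnderMahler} and assembled through the calmness criterion of Theorem~\ref{thm:precalmness}; this is exactly the kind of explicit valuation computation that the counterexample is meant to showcase.
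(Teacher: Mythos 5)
Your proposal is correct as far as the literal statement goes, but your counterexample takes a genuinely different route from the paper's, and the difference matters. Your positive half (precalm $\Rightarrow$ regular) is exactly the argument the paper relies on: divide by the polynomial $h$ furnished by precalmness, apply Theorem~\ref{thm:DumasReguralityCriterion} to the resulting calm equation, and use the ring structure of regular series. For the refutation, you propose to start from a Becker operator $M$ and left-multiply by $N = 1 + d(z)\Delta_k$ to create anxious poles; this does work, and is in fact easier than you fear: taking $d(z) = (z - \alpha)^{-1}$ for a generic anxious $\alpha$ that is not a root of unity, the new coefficients acquire a simple pole at $\alpha$ and (generically) no compensating zeros along its orbit, so the constant sequence $(1,1,1,\dots) \in A_\alpha$ already has calmness $-1$, and Theorem~\ref{thm:precalmness} certifies non-precalmness. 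The paper goes the opposite way: it writes down a specific order-two equation with non-precalm coefficients, proves by a degree-and-divisibility argument that this is the \emph{minimal} Mahler equation satisfied by its solution $f$, and then establishes regularity by left-multiplying by $1 + \psi(z)\Delta_k$ so as to \emph{remove} the poles and land on polynomial coefficients, at which point Theorem~\ref{thm:DumasReguralityCriterion} applies. The payoff of the paper's extra work is exactly the point your second paragraph raises: since precalmness is a property of the chosen equation rather than of $f$, a counterexample built on a non-minimal equation (as yours is --- your $f$ still satisfies the lower-order Becker equation $Mf = 0$) is vulnerable to the obvious repair ``test precalmness only on the minimal-order equation.'' The paper's example is immune to that repair, which is what motivates the refined criterion of Proposition~\ref{prop:technicalFullEquivalence}; your example refutes Conjecture~\ref{conj:naive} as stated, but not this strengthened reading, so you should either prove minimality of your manufactured equation (which will generally fail) or note explicitly that your refutation is of the weaker, literal form.
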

		Were this true, it would provide us with a simple computational criterion for
		regularity of a Mahler series satisfying a given equation -- we could just
		compute the associated calmnesses and apply Theorem \ref{thm:precalmness}. Sadly, it is
		not true, and we will construct a power series satisfying a
		Mahler equation with a non-precalm sequence of coefficients, and which is
		nonetheless regular. Moreover, the Mahler equation for this series that we
		give is its Mahler equation of minimal degree.

		\begin{example}
			Let $k \geq 3$ and let $\alpha$ be a $k$-anxious number such that
			$\alpha^{k - 1} \neq \pm 1$ and $\alpha \neq \alpha^{k^i}\left( 1 +
			\alpha^{1 - k} \right)$ for all $i \in \N$.
			Consider the power series $f\left( z \right) \neq 0$
			satisfying the equation
			\begin{equation}\label{eq:example}
				f\left( z \right) = \frac{\left( \alpha^{k - 1} - \alpha^{1 - k} \right)z +
				\left( \alpha - \alpha^k \right)}{\alpha - z} f\left( z^k \right) +
				\frac{\alpha^k - z}{\alpha - z} f\left( z^{k^2} \right).
			\end{equation}
			Then $f$ is $k$-regular even though the sequence of coefficients in the equation is
			not $k$-precalm.
		\end{example}

		\begin{proof}
			We first show that a power series satisfying the given equation exists. We
			write $f\left( z \right) = \sum_{i = 0}^\infty a_i z^i$. Multiplying the
			equation \eqref{eq:example} by $\alpha - z$, we get the equation
			\begin{equation*}
				\left( \alpha - z \right) f\left( z \right) = \left( \left(
				\alpha^{k - 1} - \alpha^{1 - k} \right)z + \left( \alpha - \alpha^k \right)
				\right)f\left( z^k \right) + \left( \alpha^k - z \right)f\left( z^{k^2}
				\right).
			\end{equation*}
			Looking at the coefficients of degree zero in this equation we get
			\begin{equation*}
				\alpha a_0 = \left( \alpha - \alpha^k \right)a_0 + \alpha^k a_0 = \alpha a_0
			\end{equation*}
			which is satisfied for any $a_0 \in \ck$. Fix a nonzero term $0 \neq a_0 \in
			\ck$. Then the term $a_i$ is given by the recursive relation
			\begin{equation*}
				\alpha a_i - a_{i - 1} =
				\begin{cases}
					\left( \alpha - \alpha^k \right)a_q & i = kq, k \not\divides q,\\
					\left( \alpha^{k - 1} - \alpha^{1 - k} \right)a_q & i = kq + 1, k
					\not\divides q,\\
					\left( \alpha - \alpha^k \right)a_{kq} + \alpha^k a_q  & i = k^2q,\\
					\left( \alpha^{k - 1} - \alpha^{1 - k} \right)a_{kq} - a_q & i = k^2q +
					1,\\
					0 & \textrm{otherwise.}
				\end{cases}
			\end{equation*}
			This defines a nonzero sequence of coefficients of the formal power series $f$.
			
			We now show that the series $f\left( z \right)$ does not satisfy a Mahler equation of order lower
			than two.

			Suppose that $f$ satisfies an order one Mahler equation
			\begin{equation*}
				f\left( z \right) = c\left( z \right) f\left( z^k \right)
			\end{equation*}
			with $c\left( z \right) = \frac{p\left( z \right)}{q\left( z \right)}$,
			$p, q \in \ck\left[ z \right]$. Substituting $f\left( z^k \right) =
			\frac{1}{c\left( z \right)}f\left( z \right)$ and $f\left( z^{k^2} \right) =
			\frac{1}{c\left( z^k \right)c\left( z \right)}f\left( z \right)$ to the
			original equation, dividing it by $f\left( z \right)$ and multiplying by the
			denominators, we get
			\begin{multline*}
				\left( \alpha - z \right)p\left( z \right)p\left( z^k \right) = \\ \left(
				\left( \alpha^{k - 1} - \alpha^{1 - k} \right)z + \left( \alpha - \alpha^k
				\right)	\right) q\left( z \right) p\left( z^k \right) + \left( \alpha^k - z
				\right)q\left( z \right)q\left( z^k \right).
			\end{multline*}
			Set $P = \deg p\left( z \right)$, $Q = \deg q\left( z \right)$. Then the
			left hand side has degree $P + kP + 1$ and the right hand side at most degree
			$\max\left\{ Q + kP + 1, Q + kQ + 1 \right\}$ and the degree is in fact equal to
			$\max\left\{ Q + kP + 1, Q + kQ + 1 \right\}$ if $P \neq Q$. If $P > Q$, then
			we get a contradiction,
			since then $P + kP + 1 > Q + kP + 1 > Q + kQ + 1$. If $Q > P$, then we also
			have a contradiction: $Q + kQ + 1 > Q + kP + 1 > P + kP + 1$. Therefore
			$Q = P$. Since $p\left( z^k \right)$ divides two of the three terms, and is coprime to
			$q\left( z^k \right)$, we have $p\left( z^k \right) \divides \left( \alpha^k - z
			\right)q\left( z \right)$. This gives us the inequality $kP \leq Q + 1 = P
			+ 1$, but this is a contradiction if $P \neq 0$ since $k > 2$. If $P = Q = 0$, then
			Adamczewski and Bell \cite[Lemma 7.1]{AdamczewskiBell13} have shown that $f$ has to be
			constant, which is not the case. The argument is even simpler in this case, so we recall
			it. Let $f\left( z \right)$ satisfy the Mahler equation
			\begin{equation*}
				f\left( z \right) = c f\left( z^k \right)
			\end{equation*}
			for some constant $c \in \ck$. If $f\left( z \right) = \sum_{i =
			1}^{\infty} a_i z^i$ is not constant, then
			there exists the lowest integer $1 \leq i$ such that $a_i \neq 0$. However,
			on the right side of the equation the coefficient of $z^i$ is $0$, which is a
			contradiction.
			By the above argument $f$ satisfies no Mahler equations
			of order less than two.

			Let $c_1\left( z \right) = \frac{\left( \alpha^{k - 1} - \alpha^{1 - k}
			\right)z + \left( \alpha - \alpha^k \right)}{\alpha - z}$ and $c_2\left( z
			\right) = \frac{\alpha^k - z}{\alpha - z}$. For any sequence $\left( a_i
			\right) \in A_{\alpha}$ with $a_0 = a_1 = 1$ the calmness $\clm_{\alpha,
			\left( a_i \right)}\left( c_i \right) = -1$ is negative. By Theorem
			\ref{thm:precalmness}, this sequence cannot be precalm.

			Consider the operator $1 - c_1\left( z \right)\Delta_k - c_2\left( z
			\right)\Delta_k^2 \in \ck\left( z \right)\left[ \Delta_k \right]$
			corresponding to the equation satisfied by $f$. This operator annihilates $f$
			and hence so does the operator obtained from it after multiplying it on the left by
			$1 + \psi\left( z \right)\Delta_k$ with $\psi\left( z \right) = -\alpha^{1 -
			k} \frac{\alpha - z^k}{\alpha - z}$. We have
			\begin{multline*}
				\left( 1 + \psi\left( z \right)\Delta_k \right)\left( 1 - c_1\left( z
				\right)\Delta_k - c_2\left( z \right)\Delta_k^2 \right) = \\ 1 - \left(
				c_1\left( z \right) - \psi\left( z \right) \right)\Delta_k - \left(
				c_2\left( z \right) + \psi\left( z \right)c_1\left( z^k \right)
				\right)\Delta_k^2 - \psi\left( z \right)c_2\left( z^k \right)\Delta_k^3
			\end{multline*}
			This corresponds to a Mahler equation of order three satisfied by $f$. The
			coefficients of this equation are polynomials. The coefficient at $\Delta_k$
			is
			\begin{equation*}
				-\left( c_1\left( z \right) - \psi\left( z \right) \right) = -\frac{\left(
				\alpha^{k - 1} - \alpha^{1 - k} \right)z + \left( \alpha - \alpha^k \right)
				+ \alpha^{1 - k}\left( \alpha - z^k \right)}{\alpha - z}
			\end{equation*}
			and it is a polynomial, because the numerator has a zero at $\alpha$. The
			coefficient at $\Delta_k^2$ is
			\begin{equation*}
				\begin{split}
					-\left( c_2\left( z \right) + \psi\left( z \right)c_1\left( z^k \right)
					\right) &=
					-\frac{\alpha^k - z}{\alpha - z} + \frac{\alpha^{1 - k}\left( \alpha - z^k
					\right) \left( \left( \alpha^{k - 1} - \alpha^{1 - k} \right)z^k + \left(
					\alpha - \alpha^k \right) \right)}{\left( \alpha - z \right)\left( \alpha -
					z^k \right)} \\
					&= -\frac{\left( \alpha^{2 - 2k} - 1 \right)z^k - z + \alpha^k -
					\alpha^{2 - k} + \alpha}{\alpha - z}
				\end{split}
			\end{equation*}
			and is a polynomial for the same reason. So is the coefficient at $\Delta_k^3$
			\begin{equation*}
				-\psi\left( z \right)c_2\left( z^k \right) = \frac{\alpha^{1 - k}\left(
				\alpha - z^k \right)\left( \alpha^k - z^k \right)}{\left( \alpha - z
				\right)\left( \alpha - z^k \right)} = \frac{\alpha^{1 - k}\left( \alpha^k -
				z^k \right)}{\alpha - z}.
			\end{equation*}
			By Theorem \ref{thm:DumasReguralityCriterion}, this implies that $f$ is
			regular.
			\noqedhere
		\end{proof}
	\section{The criterion for regularity}\label{sect:criteria}
	 In this final section we prove the following theorem.
		\begin{theorem}\label{thm:fullEquivalence}
			Let $f\left( z \right) \in \ck\left[ \left[ z \right] \right]$ be a Mahler power
			series. Assume that the coefficients of the minimal order Mahler equation satisfied
			by $f$ have no poles at roots of unity. Then $f$ is
			regular if and only if it satisfies a Mahler equation
			\begin{equation*}
				f\left( z \right) = \sum_{i = 1}^{n} c_i\left( z \right) f\left(
				z^{k^i} \right)
			\end{equation*}
			with $\left( c_i \right)_{i = 1}^n$ a calm sequence of rational functions.
		\end{theorem}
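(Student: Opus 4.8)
The proof splits into the two implications, one of which is immediate. For the easy direction, suppose $f$ satisfies a Mahler equation whose coefficient sequence $(c_i)_{i=1}^n$ is calm. Then the hypotheses of Theorem~\ref{thm:DumasReguralityCriterion} are met and $f$ is regular; this is the entire argument in that direction.

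The substantial direction is the converse. Assume $f$ is regular and that the coefficients of its minimal order Mahler equation have no poles at roots of unity; I must manufacture a (possibly higher order) Mahler equation for $f$ with a calm coefficient sequence. The plan is to work entirely inside the ring $\ck(z)[\Delta_k]$. Let $M = 1 - \sum_{i=1}^n c_i(z)\Delta_k^i \in 1 + \ck(z)[\Delta_k]\Delta_k$ be the minimal operator annihilating $f$. By hypothesis the $c_i$ have no poles at roots of unity, so by Definition~\ref{def:singleCalm} the sequence fails to be calm only through poles at anxious numbers that are not roots of unity, and the whole task is to remove these. I would left-multiply $M$ by a suitable operator $N \in 1 + \ck(z)[\Delta_k]\Delta_k$: then $NM$ still annihilates $f$ (as noted after Remark~\ref{rem:MahlerOperatorMahlerEquation}), and I want its coefficient sequence to be calm. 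This is precisely the mechanism of the counterexample in Section~\ref{sect:naive}, where a single factor $1 + \psi(z)\Delta_k$ cleared every anxious pole at once.

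Concretely, I would clear poles one orbit at a time. Fix an anxious non-root-of-unity $\beta$ at which some $c_i$ has a pole; since $\beta$ is not a root of unity the points $\beta^{k^\ell}$ are pairwise distinct, and $M$ has poles at only finitely many of them. Using the rule $\Delta_k c(z) = c(z^k)\Delta_k$, left-multiplication by $1 + d(z)\Delta_k^j$ replaces the coefficients by combinations of the $c_i$ and of $d(z)\,c_i(z^{k^j})$; choosing $d$ with a pole matching the extreme pole of the orbit cancels that pole while the substitution $z \mapsto z^{k^j}$ redistributes the remaining poles among neighbouring points of the orbit, exactly as the factor $(\alpha - z^k)/(\alpha - z)$ did in Section~\ref{sect:naive}. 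Organised as a finite descent along the orbit of $\beta$, this leaves no coefficient with a pole at any $\beta^{k^\ell}$; repeating over the finitely many orbits carrying poles produces the sought calm operator.

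The main obstacle is termination: one must show that this descent closes up after finitely many multiplications, i.e. that clearing the extreme pole of an orbit does not spawn poles propagating without end. This is exactly where regularity enters, and I expect the argument to parallel the implication (i)$\Rightarrow$(iii) of Theorem~\ref{thm:orderOneCriteria}. There Lemma~\ref{lem:polePerserving} shows that a pole at an anxious $\beta$ survives some application of the Cartier operators $\Lambda_{k,r}$, so a pole persisting at infinitely many points of the orbit would, through Lemma~\ref{lem:lambdaRegularity}, force the span of the $k$-kernel of $f$ to be infinite dimensional, contradicting regularity. Thus regularity bounds how far along an orbit poles can reach and guarantees that the cancellations driving the descent actually occur; the bookkeeping should be controlled by the calmnesses $\clm_{\beta,(a_i)}$ of Definition~\ref{def:calmness} and their behaviour under operator multiplication, in the spirit of Lemma~\ref{lem:actionEffect}. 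Making this termination precise, rather than verifying the cancellation at a single step, is the technical heart of the proof.
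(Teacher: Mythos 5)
Your skeleton agrees with the paper's: the easy direction is exactly Theorem \ref{thm:DumasReguralityCriterion}, and the hard direction does proceed by left multiplication of the minimal operator inside $\ck\left( z \right)\left[ \Delta_k \right]$, with regularity entering through Lemma \ref{lem:lambdaRegularity}, Lemma \ref{lem:polePerserving} and the $\ck\left( z \right)$-linear independence of the $f\left( z^{k^{i-1}} \right)$ supplied by minimality. But what you set aside as ``the technical heart'' is essentially the entire proof, and the descent you outline breaks at a concrete point: left multiplication by $1 + d\left( z \right)\Delta_k^j$ turns the coefficient at $\Delta_k^{j+i}$ into $c_{j+i}\left( z \right) + d\left( z \right)c_i\left( z^{k^j} \right)$, and by Lemma \ref{lem:valuationUnderMahler} the function $c_i\left( z^{k^j} \right)$ has poles at \emph{every} $k^j$-th root of every pole of $c_i$. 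These are anxious points lying off the forward orbit of $\beta$, so each step can spawn new anxious poles rather than merely ``redistribute the remaining poles among neighbouring points of the orbit''; in the Section \ref{sect:naive} example this is invisible only because the numerator $\alpha - z^k$ of $\psi$ was engineered to vanish exactly at the $k$-th roots of $\alpha$. Moreover, your plan to control the bookkeeping ``in the spirit of Lemma \ref{lem:actionEffect}'' cannot work as stated: that lemma concerns the action $h^*$, under which calmnesses are essentially invariant, whereas the Section \ref{sect:naive} counterexample shows that operator multiplication can turn a non-precalm equation into a calm one — so there is no analogue of Lemma \ref{lem:actionEffect} for operator multiplication to lean on, and calmness of the given equation is the wrong invariant to track.

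The paper supplies the structure your sketch lacks. Instead of orbitwise ad hoc multipliers it forms the canonical \emph{special operators} $\Gamma_m$ (take $\psi_m = d_{m, m-1}$, killing all coefficients at $\Delta_k, \dots, \Delta_k^m$) and interposes the numerical condition \pointref{it:technicalNonroot} of Proposition \ref{prop:technicalFullEquivalence}: for each $\alpha \in \cV$ the coefficients $d_{i,m}$ eventually have no pole at $\alpha$. The implication ``regular $\Rightarrow$ \pointref{it:technicalNonroot}'' is your Cartier-operator contradiction made precise, and it genuinely needs minimality, since the kernel elements now live in the $n$-dimensional $\ck\left( z \right)$-span of the $f\left( z^{k^{i-1}} \right)$ rather than the one-dimensional space of Theorem \ref{thm:orderOneCriteria}. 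The implication ``\pointref{it:technicalNonroot} $\Rightarrow$ calm equation'' is then purely constructive and uses no regularity at all: the $\psi_m$ are replaced by $\psi'_m$, multiplied by explicit polynomials vanishing on the backward-orbit sets $B_m$ (exactly to kill the off-orbit poles described above) and corrected via the approximation Lemma \ref{lem:arbitraryDigits} so that $\psi'_m$ has the same first $v$ $\left( z - \alpha \right)$-adic digits as $\psi_m$ at every $\alpha \in \cV$, which is what preserves the cancellations that condition \pointref{it:technicalNonroot} promises. Without this two-stage structure — canonical operators, numerical criterion, and digit-matching correction — the termination you appeal to has no proof, so the proposal as written leaves the theorem unestablished.
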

		We also conjecture that the assumptions on coefficients of the minimal
		equation are unneccessary.
		\begin{conjecture}\label{conj:fullEquivalence}
			A power series $f\left( z \right) \in \ck\left[ \left[ z \right] \right]$ is
			regular if and only if it satisfies a Mahler equation
			\begin{equation*}
				f\left( z \right) = \sum_{i = 1}^{n} c_i\left( z \right) f\left(
				z^{k^i} \right)
			\end{equation*}
			with $\left( c_i \right)_{i = 1}^n$ a calm sequence of rational functions.
		\end{conjecture}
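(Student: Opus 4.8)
The easy direction is immediate: if $f$ satisfies a Mahler equation with a calm sequence of coefficients then Theorem~\ref{thm:DumasReguralityCriterion} gives that $f$ is regular, using neither minimality nor the hypothesis on poles. So the content is the converse, which I would phrase in the ring of Mahler operators: writing the minimal equation as $M_0 f = 0$ with $M_0 \in 1 + \ck\left( z \right)\left[ \Delta_k \right]\Delta_k$, every equation satisfied by $f$ is a left multiple $N M_0$ with $N \in 1 + \ck\left( z \right)\left[ \Delta_k \right]\Delta_k$, and the goal becomes to produce a finite-order $N$ whose product $N M_0$ has calm coefficients. It is essential that this is left multiplication of the \emph{operator}, keeping $f$ fixed and raising the order, and not the action $h^*$ (which would replace $f$ by $f/h$): the Example of Section~\ref{sect:naive} shows that the coefficients of the minimal equation of a regular series need not be precalm, so one genuinely cannot stay at the minimal order.

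First I would use the hypothesis to localise. By assumption the coefficients of $M_0$ have poles only at $0$ and at anxious numbers that are not roots of unity; poles at $0$ and at calm roots of unity are permitted in a calm equation, so the sole obstruction is the anxious non-root-of-unity poles. I would attack these orbit by orbit — for such $\alpha$ the orbit $\left( \alpha^{k^l} \right)_l$ has pairwise distinct points meeting the finite pole set finitely — modelling each clearing on the Example of Section~\ref{sect:naive} and on the order-one construction of Lemma~\ref{lem:precalm}. The immediate warning is that left-multiplying by $1 + \sum_j d_j \Delta_k^j$ introduces the functions $c_i\left( z^{k^j} \right)$, whose poles sit at $k^j$-th roots of the old poles, i.e.\ \emph{backward} along every orbit; since orbits of non-roots-of-unity extend backward indefinitely, naive pole-counting gives no termination, and the finiteness has to come from regularity rather than from the equation alone.

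For the clearing step I would, at an anxious pole $\alpha$, choose the multiplier built from factors $\frac{h\left( z^{k^j} \right)}{h\left( z \right)}$ with $h$ vanishing at $\alpha$, so that $\frac{1}{h\left( z \right)}$ matches the pole of the low-order coefficient at $\alpha$ while the zeros of $h\left( z^{k^j} \right)$ kill the poles that $c_i\left( z^{k^j} \right)$ puts at the backward orbit of $\alpha$; the scalars left free in $h$ are pinned down by a local order-one computation exactly as in Theorem~\ref{thm:orderOneCriteria}, and the whole valuation bookkeeping is governed by Lemmas~\ref{lem:valuationUnderMahler}, \ref{lem:actionEffect} and \ref{lem:arbitraryDigits}. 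The hypothesis on roots of unity guarantees that the points produced backward are again anxious non-roots-of-unity (or $0$), so the construction never leaves this regime and never has to be reconciled with the root-of-unity elimination of Proposition~\ref{prop:rootsElimination}.

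The hard part will be twofold. First, regularity does not forbid a negative calmness — the Example has $\clm_{\alpha,(a_i)} = -1$ yet is regular — so the condition that actually makes a cancelling $N$ exist is strictly weaker than nonnegative calmness, and I would have to extract it from the finite-dimensional $\Lambda_{k,r}$-stable space $V$ attached to $f$ by Lemma~\ref{lem:lambdaRegularity}. Here the structural fact I would lean on is that, by the minimal equation and Lemma~\ref{lem:lambdaProp}, every iterated Cartier image of $f$ is a $\ck\left( z \right)$-combination of $f\left( z \right), \dots, f\left( z^{k^{n-1}} \right)$, so $V$ lives in a $\ck\left( z \right)$-module of rank at most $n$; tracking a pole through this recursion with the pole-preservation Lemma~\ref{lem:polePerserving}, an \emph{unclearable} anxious pole would force the valuations $v_{\alpha^{k^l}}$ of elements of $V$ to be negative for infinitely many $l$, exactly as in the implication (i)$\Rightarrow$(iii) of Theorem~\ref{thm:orderOneCriteria}, contradicting finite-dimensionality. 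Second, and most delicately, I would need a termination measure for the orbit-by-orbit clearing that is bounded by the dimension of $V$ rather than by a raw pole count, so that the backward propagation flagged above is absorbed; making this measure precise, and checking that it strictly decreases at each clearing step, is where I expect the genuine work to lie.
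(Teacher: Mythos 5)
The first thing to notice is that the statement you are proving is Conjecture~\ref{conj:fullEquivalence}, which the paper leaves \emph{open}: the paper proves only Theorem~\ref{thm:fullEquivalence}, the special case in which the coefficients of the minimal-order Mahler equation for $f$ have no poles at roots of unity. Your proposal silently imports exactly that hypothesis (``by assumption the coefficients of $M_0$ have poles only at $0$ and at anxious numbers that are not roots of unity'') and then declares that the construction ``never has to be reconciled with the root-of-unity elimination of Proposition~\ref{prop:rootsElimination}.'' But the conjecture provides no such assumption; handling anxious poles of the minimal equation at roots of unity is precisely the missing part, and it is genuinely different in character: there the orbit $\alpha, \alpha^k, \alpha^{k^2}, \dots$ is finite, and by Lemma~\ref{lem:actionEffect} the calmness is \emph{invariant} under the action of any $h$, rather than shifted by $v_\alpha\left( h \right)$, so your pole-clearing mechanism (push the pole forward along the orbit until it meets a zero) has no analogue. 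This is why the general statement remains a conjecture in the paper. So your argument, even if completed, would prove the theorem, not the statement in question.

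Within the restricted regime your plan does track the paper's actual route for Theorem~\ref{thm:fullEquivalence} (via Proposition~\ref{prop:technicalFullEquivalence}): raise the order by left multiplication in $\ck\left( z \right)\left[ \Delta_k \right]$, clear the anxious non-root-of-unity poles, and for the converse use Cartier operators, Lemma~\ref{lem:polePerserving}, and the $\ck\left( z \right)$-linear independence of $f\left( z \right), \dots, f\left( z^{k^{n-1}} \right)$ coming from minimality. But the two points you yourself flag as ``the genuine work'' are exactly what the paper supplies and you do not. First, the paper never needs a ``termination measure bounded by $\dim V$'': it interposes the numerical criterion (i) of Proposition~\ref{prop:technicalFullEquivalence}, phrased via the special operators $\Gamma_m$ with the recursive choice $\psi_m = d_{m, m-1}$, shows regularity implies that criterion (your contradiction argument, made precise), and then finiteness is automatic because $\cV$ is finite and condition (i), once true for some $m$, persists for all larger $m$. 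Second, the step from criterion (i) to an actual calm equation is not a formal consequence: the paper must replace each $\psi_m$ by a proxy $\psi_m'$ that is pole-free off $\cV$ yet agrees with $d_{m, m-1}$ to sufficiently many $\left( z - \alpha \right)$-adic digits (Lemma~\ref{lem:arbitraryDigits}), with an explicit correction factor indexed by the set $B_m$ to absorb the backward-orbit poles of $c_{i-m}\left( z^{k^m} \right)$ that worry you. Without these two constructions the proposal is a programme, not a proof --- and with them, it is the paper's proof of the theorem, still short of the conjecture.
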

		Conjecture \ref{conj:fullEquivalence} trivially implies Conjecture
		\ref{conj:altBecker} and therefore	it also implies the
		original Becker's conjecture (see the discussion of various versions of Becker's
		conjecture in Section \ref{sect:conjectures}). By the same arguments the
		following weaker version of Becker's conjecture is also implied by Theorem
		\ref{thm:fullEquivalence}.
		\begin{proposition}\label{prop:weakBecker}
			Let $f\left( z \right) \in \ck\left[ \left[ z \right] \right]$ be a Mahler power
			series. Assume that the coefficients of the minimal order Mahler equation satisfied
			by $f$ have no poles at roots of unity. Then $f$ is $k$-regular
			if and only if there exists a $k$-regular rational function $0 \neq h\left( z \right)
			\in \ck\left( z \right)$ such that $\frac{f\left(
			z \right)}{h\left( z \right)}$ satisfies a Mahler equation with polynomial
			coefficients.
		\end{proposition}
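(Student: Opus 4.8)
My plan is to obtain this proposition as a formal consequence of Theorem \ref{thm:fullEquivalence}, following verbatim the reduction by which Conjecture \ref{conj:fullEquivalence} yields the original Becker conjecture (Conjecture \ref{conj:Becker}) in Section \ref{sect:conjectures}. I would prove the two implications separately and note that the hypothesis H---that the minimal order equation of $f$ has no poles at roots of unity---is needed only for the forward direction.

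For the implication ``$\Leftarrow$'', suppose a $k$-regular rational function $0 \neq h$ exists such that $f/h$ satisfies a Mahler equation with polynomial coefficients. This is exactly the situation covered by the ``if'' direction of Conjecture \ref{conj:Becker}, which Becker already established in \cite[Corollary 1]{Becker92}: $f/h$ is then a Becker series and hence regular, $h$ is $k$-regular by assumption, and $f = h\cdot(f/h)$ is regular because $k$-regular series form a ring. I would simply cite this result; note that it requires no assumption on $f$ whatsoever.

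For the implication ``$\Rightarrow$'', I would begin with $f$ regular and apply Theorem \ref{thm:fullEquivalence}, whose hypothesis is precisely H. This produces a Mahler equation $f = \sum_{i = 1}^{n} c_i(z) f(z^{k^i})$ whose coefficient sequence $(c_i)$ is calm. Lemma \ref{lem:prepolynomial} then furnishes a polynomial $0 \neq h \in \ck[z]$ for which every $h^*(c_i)$ is a polynomial. Substituting $g = f/h$ and dividing the equation by $h(z)$ gives $g = \sum_{i=1}^n h^*(c_i)\, g(z^{k^i})$, a Mahler equation with polynomial coefficients; and a polynomial has no poles, so it is in particular $k$-regular by the characterisation of \cite[Theorem 3.3]{AlloucheShallit92}. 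This exhibits the required rational function $h$.

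The substance of the argument is carried entirely by Theorem \ref{thm:fullEquivalence}, so I expect no genuine obstacle in the deduction itself: the manipulations are the same bookkeeping as in the proof of Lemma \ref{lem:equivPropForCoeffs}, with the ``$\Rightarrow$'' chain for the general conjectures simply collapsed. The one point deserving a word of care is that the polynomial produced by Lemma \ref{lem:prepolynomial} may vanish at $0$ (calm functions are allowed poles at $0$), so $g = f/h$ should be read as a formal Laurent series rather than a power series. This causes no difficulty, since the whole Mahler operator formalism of Section \ref{subs:mahlerOperators} lives over $\ck((z))$ and the polynomial equation for $g$ holds there without change.
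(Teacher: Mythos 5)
Your proposal is correct and takes essentially the same route as the paper: the paper obtains Proposition \ref{prop:weakBecker} from Theorem \ref{thm:fullEquivalence} ``by the same arguments'' as in Section \ref{sect:conjectures}, which is exactly the chain you spell out (the known converse via Becker/Dumas and the ring property, then Theorem \ref{thm:fullEquivalence} producing a calm equation and Lemma \ref{lem:prepolynomial} turning it into polynomial coefficients for $f/h$ with $h$ a polynomial, hence $k$-regular). Your closing remark that $f/h$ may be a Laurent series when $h(0)=0$ is a legitimate point of care that the paper leaves implicit, and you resolve it correctly by working in $\ck\left( \left( z \right) \right)$.
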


		The statement of Theorem \ref{thm:fullEquivalence} is purely
		existential. However, in the process of proving the theorem we will explicitly
		give a computational criterion involving coefficients of the minimal order
		Mahler equation satisfied by $f$.
		\subsection{Generating higher order Mahler equations}
			Consider a power series $f\left( z \right) \in \ck\left[ \left[ z
			\right] \right]$ satisfying the Mahler equation
			\begin{equation*}
				f\left( z \right) = \sum_{i = 1}^{n} c_i\left( z \right) f\left(
				z^{k^i} \right).
			\end{equation*}
			This equation corresponds to the operator
			\begin{equation*}
				1 - \sum_{i = 1}^n c_i\left( z \right)\Delta_k^i
			\end{equation*}
			that annihilates $f$. Consider a sequence of rational functions $\left(
			\psi_{i}\left( z \right) \right)$ and consider the following operators
			\begin{equation*}
				\Gamma_m\left( \psi_i \right) = \left( 1 + \sum_{i = 1}^m \psi_i\left( z \right)\Delta_k^i \right)\left(
				1 - \sum_{i = 1}^n c_i\left( z \right)\Delta_k^i \right) = \left( 1 -
				\sum_{i = 1}^{n + m} d_{i, m}\left( z \right)\Delta_k^i \right).
			\end{equation*}
			These operators also annihilate $f$ and we have the following recursive formulas for
			$d_{i, m}$:
			\begin{align*}
				d_{i, 0}\left( z \right) &= c_i\left( z \right),\\
				d_{i, m}\left( z \right) &=
				\begin{cases}
					d_{i, m - 1}\left( z \right) & \textrm{ for } 0 \leq i < m\\
					d_{i, m - 1}\left( z \right) - \psi_m\left( z \right) & \textrm{ for } i = m\\
					d_{i, m - 1}\left( z \right) + \psi_m\left( z \right)c_{i - m}\left( z^{k^m}
					\right) & \textrm{ for } m < i < m + n\\
					\psi_m\left( z \right)c_{i - m}\left( z^{k^m}	\right) & \textrm{ for } i =
					m + n.
				\end{cases}
			\end{align*}
			These formulas can be proven by a simple induction. The case $m = 0$ is trivial.
			Assume the claim is true for $m - 1$. Then we get
			\begin{multline*}
					\left( 1 + \sum_{i = 1}^m \psi_i\left( z \right)\Delta_k^i \right)\left(
					1 - \sum_{i = 1}^n c_i\left( z \right)\Delta_k^i \right) =\\
					\left( 1 + \sum_{i = 1}^{m - 1} \psi_i\left( z \right)\Delta_k^i \right)\left( 1 -
					\sum_{i = 1}^n c_i\left( z \right)\Delta_k^i \right)
					+ \psi_m\left( z
					\right)\Delta_k^m \left( 1 - \sum_{i = 1}^n c_i\left( z \right)\Delta_k^i
					\right) =\\
					\left( 1 - \sum_{i = 1}^{n + m - 1} d_{i, m - 1}\left( z
					\right)\Delta_k^i \right) + \left( \psi_m\left( z \right)\Delta_k^m -
					\sum_{i = 1}^n \psi_m\left( z \right)c_i\left( z^{k^m}
					\right)\Delta_k^{m + i} \right) = \\
					1 - \left(\sum_{i = 1}^{m - 1}
					d_{i, m - 1}\left( z \right)\Delta_k^i \right)
					- \left( d_{m, m - 1}\left(
					z \right) - \psi_m\left( z \right) \right)\\
				 - \left(\sum_{i = m + 1}^{n + m - 1}
					d_{i, m - 1}\left( z \right) + \psi_m\left( z \right)c_{i - m}\left(
					z^{k^m} \right) \right)
					- \psi_m\left( z \right)c_{n}\left( z^{k^m}
					\right).
			\end{multline*}

			We now define a particularly interesting family of operators of the above
			type.
			\begin{definition}
				Let the sequence of rational functions $\left( \psi_i\left( z \right)
				\right)$ be defined recursively by the formula
				\begin{equation*}
					\psi_m\left( z \right) = d_{m, m - 1}\left( z \right).
				\end{equation*}
				This definition is not circular, since the value of $d_{m, m-1}\left( z \right)$ depends only on
				$\psi_i\left( z \right)$ for $1 \leq i < m$. We obtain in this manner a
				sequence of operators $\Gamma_m$ annihilating $f$. We call these operators
				the \emph{special operators for $f$}.
			\end{definition}
			It is easy to see from the recursive formulas for $d_{i, m}\left( z
			\right)$ that the special operators of $f$ have the form
			\begin{equation*}
				\Gamma_m = 1 - \sum_{i = 1}^n d_{m+i, m}\left( z \right)\Delta_k^{m + i}
			\end{equation*}
			and correspond to Mahler equations satisfied by $f$ of the form
			\begin{equation*}
				f\left( z \right) = \sum_{i = 1}^n d_{m+i, m}\left( z \right)f\left(
				z^{k^{i + m}} \right).
			\end{equation*}

			Defining the special operators for $f$ allows us to state a more precise
			version of Theorem \ref{thm:fullEquivalence}.
			\begin{proposition}\label{prop:technicalFullEquivalence}
				Let $f\left( z \right) \in \ck\left[ \left[ z \right] \right]$ be a power
				series satisfying the Mahler equation
				\begin{equation*}
					f\left( z \right) = \sum_{i = 1}^{n} c_i\left( z \right) f\left(
					z^{k^i} \right)
				\end{equation*}
				and satisfying no Mahler equations of order lower than $n$. Define the set
				\begin{equation*}
					\cV =
					\left\{ \alpha \in \ck \colon \alpha \textrm{ is anxious and for some } 1 \leq i \leq
					n, v_\alpha\left( c_i\left( z \right)	\right) < 0 \right\}.
				\end{equation*}
			 If $\cV$
				contains no roots of unity, then the following conditions are equivalent
				\begin{enumerate}[(i)]
					\item\label{it:technicalNonroot} For every $\alpha \in \cV$ there exists
						a natural number $m$ such that $v_\alpha\left( d_{i, m}\left( z \right) \right) \geq 0$
						for all $m + 1 \leq i \leq m + n$ and $v_{\alpha^{k^j}}\left( c_i\left( z \right) \right) \geq 0$
						for all $j > m$, $1 \leq i \leq n$ (note that the latter is automatically
						satisfied for $m$ large enough).
					\item\label{it:hasCalmEquation} $f$ satisfies a Mahler equation with a calm sequence of coefficients.
					\item\label{it:isRegular} $f$ is regular.
				\end{enumerate}
				In condition \pointref{it:technicalNonroot}, $d_{i, m}\left( z \right)$ are
				the coefficients of the special operators for $f$.
			\end{proposition}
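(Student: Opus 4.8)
The plan is to prove the cycle of implications \pointref{it:isRegular}$\Rightarrow$\pointref{it:technicalNonroot}$\Rightarrow$\pointref{it:hasCalmEquation}$\Rightarrow$\pointref{it:isRegular}. The implication \pointref{it:hasCalmEquation}$\Rightarrow$\pointref{it:isRegular} is immediate from Theorem \ref{thm:DumasReguralityCriterion}, so the real content lies in the other two. Throughout I would use that $\cV$ is finite (each $c_i$ has finitely many poles) and, by hypothesis, consists entirely of anxious numbers that are not roots of unity; in particular every $\alpha\in\cV$ has an infinite forward orbit $\alpha,\alpha^k,\alpha^{k^2},\dots$ of pairwise distinct points, and the second requirement in condition \pointref{it:technicalNonroot} simply says that this orbit meets the pole sets of the $c_i$ only finitely often.

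For \pointref{it:technicalNonroot}$\Rightarrow$\pointref{it:hasCalmEquation} write (a) for the requirement $v_\alpha(d_{i,m})\ge 0$ ($m+1\le i\le m+n$) and (b) for $v_{\alpha^{k^j}}(c_i)\ge 0$ ($j>m$). First I would record the stability observation that makes the special operators usable: if (a) and (b) hold for $\alpha$ at some $m$, then they hold for $\alpha$ at every $m'\ge m$. This is a short induction on the recursion for $d_{i,m}$. Passing from $m$ to $m+1$ multiplies on the left by $1+\psi_{m+1}\Delta_k$ with $\psi_{m+1}=d_{m+1,m}$, so the new coefficients are $d_{m+1,m}$ together with $d_{i,m}+\psi_{m+1}c_{i-m-1}(z^{k^{m+1}})$ and $\psi_{m+1}c_n(z^{k^{m+1}})$; by Lemma \ref{lem:valuationUnderMahler} one has $v_\alpha(c_\bullet(z^{k^{m+1}}))=v_{\alpha^{k^{m+1}}}(c_\bullet)\ge 0$ from (b) and $v_\alpha(\psi_{m+1})=v_\alpha(d_{m+1,m})\ge 0$ from (a), so no coefficient acquires a pole at $\alpha$. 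Applying this to each of the finitely many $\alpha\in\cV$ and taking $M$ larger than all the corresponding $m_\alpha$, the special operator $\Gamma_M$ yields a Mahler equation for $f$ whose coefficients $d_{M+i,M}$ have no poles on $\cV$.

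The coefficients of $\Gamma_M$ are, however, not yet calm: forming the products $\psi_j(z)c_i(z^{k^j})$ introduces poles at $k^j$-th roots of points of $\cV$, which are themselves anxious. The second and harder half of this implication is to remove these \emph{preimage} poles by further left multiplications $1+\phi_j\Delta_k^j$ without reintroducing poles on $\cV$. My plan is to clear them one point at a time: to kill a pole of the current coefficient at an anxious $\beta$ with $\beta^{k^j}\in\cV$, I would use the term $\phi_j(z)c_{i-j}(z^{k^j})$, which by Lemma \ref{lem:valuationUnderMahler} has a pole at $\beta$ of order $-v_{\beta^{k^j}}(c_{i-j})$, and choose the polynomial $\phi_j$ by the digit-matching Lemma \ref{lem:arbitraryDigits} so that its $(z-\beta)$-adic digits cancel exactly the principal part there. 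The candidate set $\{\beta:\beta^{k^j}\in\cV,\ 0\le j\le M\}$ is finite, but each cancellation can create still deeper preimage poles, so the key point I must justify is \emph{termination}. Here condition (b) is what saves the argument: the forward orbit of any such $\beta$ passes through $\cV$ and then leaves it permanently after finitely many further steps, so ordering the poles by the length of the forward orbit-segment still meeting $\cV$ gives a well-founded induction in which each clearing step strictly decreases the governing rank while leaving $\cV$ and the already-cleared preimages pole-free. I expect this termination bookkeeping to be the main obstacle of the whole proof.

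For \pointref{it:isRegular}$\Rightarrow$\pointref{it:technicalNonroot} I would argue by contraposition, generalising the non-regularity half of Theorem \ref{thm:orderOneCriteria}. If \pointref{it:technicalNonroot} fails then, since $\cV$ is finite and (b) holds automatically for large $m$, there is a fixed $\alpha\in\cV$ such that for all large $m$ some coefficient of $\Gamma_m$ has a pole at $\alpha$. Using Lemma \ref{lem:lambdaProp} to push the Cartier operators $\Lambda_{k,r}$ through the Mahler equation attached to $\Gamma_m$, and Lemma \ref{lem:polePerserving} to choose the indices $r_i$ so that the pole at $\alpha$ is never destroyed, I would produce, for arbitrarily large $L$, iterated images $\Lambda_{r_L}\cdots\Lambda_{r_1}f$ whose associated rational multipliers have poles of arbitrarily negative order along the forward orbit (equivalently, at infinitely many distinct points $\alpha^{k^L}$). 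By Lemma \ref{lem:lambdaRegularity} the $\ck$-span of the iterated $\Lambda$-images is then infinite dimensional, so $f$ is not regular. The bookkeeping here is heavier than in the order-one case because $\Gamma_m$ carries $n$ nonzero coefficients rather than one, so the elements of the span live in an $n$-dimensional $\ck(z)$-module and one must track which coefficient carries the growing pole; but the underlying mechanism is exactly that of Theorem \ref{thm:orderOneCriteria}.
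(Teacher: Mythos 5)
Your cycle \pointref{it:hasCalmEquation}$\Rightarrow$\pointref{it:isRegular}, \pointref{it:isRegular}$\Rightarrow$\pointref{it:technicalNonroot}, and the first half of \pointref{it:technicalNonroot}$\Rightarrow$\pointref{it:hasCalmEquation} (stability of condition (i) under increasing $m$, then a uniform $M$ by finiteness of $\cV$) all agree with the paper. The gap is exactly in the step you yourself flag as the main obstacle, and your termination argument there runs in the wrong direction. Left-multiplying the current operator by $1+\phi_j\Delta_k^j$ produces cross terms $\phi_j(z)\,g\bigl(z^{k^j}\bigr)$ for \emph{every} current coefficient $g$; by Lemma \ref{lem:valuationUnderMahler} the anxious poles of such a term sit at points $\gamma$ with $\gamma^{k^j}$ a pole of $g$, i.e.\ at strictly deeper preimages of $\cV$ than the poles already present. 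So each clearing step creates poles whose forward orbit-segment meeting $\cV$ is strictly \emph{longer} than those already in play: your rank increases rather than decreases, and the induction is not well-founded. There is a second, independent failure: once you have passed to $\Gamma_M$, a polynomial $\phi_j$ can only cancel the pole of $d_{M+l,M}$ at $\beta$ through the term $\phi_j(z)\,d_{M+l-j,M}\bigl(z^{k^j}\bigr)$, which has a pole at $\beta$ only if $d_{M+l-j,M}$ has a pole at $\beta^{k^j}$. When $\beta^{k^j}\in\cV$ this is impossible, because you chose $M$ precisely so that all $d_{i,M}$ are pole-free on $\cV$; and letting $\phi_j$ itself carry a pole at $\beta$ merely re-injects that pole into the other coefficients through its own cross terms.

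The paper resolves this not by clearing poles after the fact but by \emph{preventing} them in a single pass. It re-runs the special-operator recursion with modified multipliers $\psi'_m(z)=h(z)\,d'_{m,m-1}(z)\prod_{\alpha\in B_m}\left(\alpha-z\right)^{-\min_{j} v_{\alpha^{k^m}}\left(c_j\left(z\right)\right)}$, where $B_m$ is the finite set of points outside $\cV$ whose $k^m$-th power is a pole of some $c_j$. The product factor forces $\psi'_m$ to vanish at every would-be preimage pole, so the terms $\psi'_m(z)c_{i-m}\bigl(z^{k^m}\bigr)$ never acquire anxious poles outside $\cV$ at all; the polynomial $h$ is chosen by Lemma \ref{lem:arbitraryDigits} so that $\psi'_m$ agrees with $\psi_m=d_{m,m-1}$ in its first $v$ $\left(z-\alpha\right)$-adic digits at every $\alpha\in\cV$, where $v$ bounds all pole orders of the $d_{j,m}$ on $\cV$. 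This digit-matching propagates inductively, so the primed coefficients have the same first $v$ digits on $\cV$ as the unprimed ones, and condition \pointref{it:technicalNonroot} then makes the final coefficients $d'_{i,m_0}$ pole-free on $\cV$ as well; poles on $\cV$ are never ``cleared'' by hand, they cancel because condition (i) says they cancel for the genuine special operators. Termination is automatic since the construction runs exactly $m_0$ steps. To salvage your scheme you would need to build this prevention (prescribed zeros at all relevant preimages) into each $\phi_j$ and to work against the original $c_i$ rather than against $\Gamma_M$ — at which point you have reconstructed the paper's argument.
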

		 Theorem	\ref{thm:fullEquivalence} states the equivalence
			of conditions \pointref{it:hasCalmEquation} and \pointref{it:isRegular} in
			this proposition.

			The criterion stated in Proposition \ref{prop:technicalFullEquivalence} is
			effective and, given a Mahler equation for $f$ of minimal order, allows to
			check whether $f$ is regular. In fact, rational functions have only finitely
			many poles and roots, so at some point they stop influencing the valuations of
			$d_{i, m}\left( z \right)$.
		\subsection{Proof of the criterion}
			\begin{proof}
				The implication
				\pointref{it:hasCalmEquation}$\Rightarrow$\pointref{it:isRegular} follows
				immiediately from Theorem \ref{thm:DumasReguralityCriterion}.
				\begin{description}
					\item[\pointref{it:technicalNonroot}$\Rightarrow$\pointref{it:hasCalmEquation}]
						First note that if condition \pointref{it:technicalNonroot} is
						satisfied for a number $m$, then it is also satisfied for any number
						bigger than $m$. Indeed, since $v_{\alpha^{k^j}}\left( c_i\left( z \right)
						\right) \geq 0$ for all $j > m$, $1 \leq i \leq n$, the functions
						$c_i\left( z \right)$ will not lower any valuations of $d_{i, m'}\left(
						z \right)$ for $m' \geq m$ and sums of
						terms with nonnegative valuation cannot have negative valuation.
						
						Since $\cV$ is a finite set, we can pick a number $m_0$
						such that	property \pointref{it:technicalNonroot} is satisfied for all
						$\alpha \in \cV$. We therefore have that $d_{i, m_0}\left( z \right)$ have
						no poles at $\alpha \in \cV$. We will construct a sequence of rational
						functions $\left( \psi'_l \right)_{l = 1}^{m_0}$ such that $\Gamma_m\left(
						\psi'_l \right) = 1 - \sum_{i = 1}^{n
						+ m} d'_{i, m}\left( z \right)\Delta_k^i$ for $1 \leq m \leq m_0$ and such
						that the coefficients $d'_{i, m_0}\left( z \right)$ are calm.

						The choice of $\psi'_1, \dots, \psi'_{m - 1}$ determines $d'_{i, m - 1}$
						and these values will be used in turn to construct $\psi'_m$. Note that
						$d'_{i, 0}\left( z \right) = c_i\left( z \right)$ does not depend on the
						choice of $\psi'_l$. Consider the number 
						\begin{equation*}
							v = -\min_{\substack{\alpha \in \cV \\ 0 \leq m \leq m_0\\ 1+m \leq j \leq n+m}}\left\{ v_\alpha\left(
							d_{j, m}\left( z \right) \right)\right\}.
						\end{equation*}
						Note that $v > 0$ since $\alpha \in \cV$.
						Define the set
						\begin{equation*}
							B_m = \left\{ \alpha \in \ck
							\colon v_{\alpha^{k^m}}\left( c_j\left( z \right) \right) < 0
							\textrm{ for some } 1 \leq j \leq n \right\} \setminus \cV.
						\end{equation*}
						Set
						\begin{equation*}
							\psi_m'\left( z \right) =
							h\left( z \right) d'_{m, m - 1}\left( z \right) \prod_{\alpha \in B_m}
							\left( \alpha - z \right)^{-\min_{1 \leq j \leq n}\left\{ v_{\alpha^{k^m}}\left( c_j\left(
							z \right) \right) \right\}},
						\end{equation*}
						where $h\left( z \right) \in \ck\left[ z \right]$ is chosen by Lemma \ref{lem:arbitraryDigits} so that $\psi'_m$ has the
						same $v$ first $\left( z - \alpha \right)$-adic digits as $d_{m, m - 1}\left( z
						\right)$ for any $\alpha \in \cV$. Such a choice of $h$ is possible since,
						as we will show shortly, $d_{m, m - 1}\left( z \right)$ has the same
						$\left( z - \alpha \right)$-adic valuation as $d'_{m, m - 1}\left( z
						\right)$ for $\alpha \in \cV$.

						We first show that $d'_{i, m_0}\left( z \right)$ are $\alpha$-calm for $\alpha \in \cV$. It
						is sufficient to show that $d'_{i, m}\left( z \right)$ is $\alpha$-calm
						for $1 \leq i \leq m$ and has the same $v$ first
						$\left( z - \alpha \right)$-adic digits as $d_{i, m}\left( z \right)$ for
						all $m + 1 \leq i \leq m+n$, $0 \leq m \leq m_0$. (Here we say that the
						power series have the same first $s$ digits if their
						valuations have the same value (say $w$) and the digits at $\left( z -
						\alpha	\right)^j$ are the same for $w \leq j \leq w + s - 1$.) Indeed, in
						this case we know that $d'_{i, m}$ and $d_{i, m}$ have the same $\left( z - \alpha
						\right)$-adic valuation and that $d_{i, m_0}\left( z \right)$ has positive
						valuation at all $\alpha \in \cV$ by condition \pointref{it:technicalNonroot}.
						We prove this statement inductively on $m$. The functions $d'_{i, 0}\left( z
						\right) = c_i\left( z \right) = d_{i, 0}\left( z \right)$, so the condition
						is satisfied. Assume that $d'_{i, m - 1}\left( z \right)$ is $\alpha$-calm
						for all $1 \leq i \leq m - 1$ and has the same
						first $\left( z - \alpha \right)$-adic digit as $d_{i, m - 1}\left( z
						\right)$ for $m \leq i \leq m+n-1$. Recall the formulas for the functions
						\begin{align*}
							d'_{i, m}\left( z \right) &=
							\begin{cases}
								d'_{i, m - 1}\left( z \right) & \textrm{ for } 0 \leq i < m\\
								d'_{i, m - 1}\left( z \right) - \psi'_m\left( z \right) & \textrm{ for } i = m\\
								d'_{i, m - 1}\left( z \right) + \psi'_m\left( z \right)c_{i - m}\left( z^{k^m}
								\right) & \textrm{ for } m < i < m + n\\
								\psi'_m\left( z \right)c_{i - m}\left( z^{k^m}	\right) & \textrm{ for } i =
								m + n.
							\end{cases}
						\end{align*}
						For $0 \leq i \leq m - 1$, $d'_{i, m}\left( z \right)$ is
						$\alpha$-calm since $d'_{i, m - 1}\left( z \right)$ is $\alpha$-calm for
						all $\alpha \in \cV$. For $i = m$ note that $d'_{i, m - 1}\left( z
						\right)$ has the same $v$ first $\left( z - \alpha \right)$-adic digits as
						$d_{i, m - 1}\left( z \right)$, so also as $\psi'_m\left( z \right)$ by
						construction. Therefore, by the choice of $v$, $d'_{i, m} = d'_{i, m -
						1}\left( z \right) - \psi'_m\left( z \right)$ has positive $\left( z -
						\alpha \right)$-adic valuation, so is $\alpha$-calm. For $m + 1 \leq i
						\leq m + n$ again $d'_{i, m - 1}\left( z \right)$ has the same $v$ first
						$\left( z - \alpha \right)$-adic digits as $d_{i, m - 1}\left( z \right)$
						and $\psi'_m\left( z \right)$ has the same $v$ first $\left( z - \alpha
						\right)$-adic digits as $d_{m, m - 1}\left( z \right) = \psi_m\left( z
						\right)$. Therefore all the terms in the formula for $d'_{i, m}\left( z
						\right)$ have the same $v$ first $\left( z - \alpha \right)$-adic digits
						as the terms in the formula for $d_{i, m}\left( z \right)$, so these
						functions also have the same $v$ first $\left( z - \alpha \right)$-adic
						digits finishing the proof by induction.

						Now we show that $d'_{i, m}\left( z \right)$ are $\alpha$-calm for all
						$\alpha \not\in \cV$ and $0 \leq m \leq m_0$. We again show this inductively.
						For $m = 0$ the function $d'_{i, 0}\left( z \right) = c_i\left( z
						\right)$ is $\alpha$-calm for $\alpha \not\in \cV$ by the definition of
						$\cV$. Assume that $d'_{i, m - 1}\left( z \right)$ are $\alpha$-calm for
						$\alpha \not\in \cV$. Note that then $\psi'_m\left( z \right)$ is also
						$\alpha$-calm. Indeed, 
						\begin{equation*}
							\psi_m'\left( z \right) =
							h\left( z \right) d'_{m, m - 1}\left( z \right) \prod_{\alpha \in B_m}
							\left( \alpha - z \right)^{-\min_{1 \leq j \leq n}\left\{ v_{\alpha^{k^m}}\left( c_j\left(
							z \right) \right) \right\}},
						\end{equation*}
						where $h\left( z \right)$ and the product $\prod_{\alpha
						\in B_m} \left( \alpha - z \right)^{-\min_{1 \leq j \leq n}
						v_{\alpha^{k^m}}\left( c_j\left( z \right) \right)}$ are
						polynomials and $d'_{m, m - 1}\left( z \right)$ is $\alpha$-calm.
						Therefore, $d'_{m, m}\left( z \right)$ is $\alpha$-calm
						as a sum of two $\alpha$-calm functions. The function $d'_{i, m}\left(
						z \right)$, $m + 1 \leq i \leq m + n$ is also a sum of $\alpha$-calm functions. Indeed, if $\alpha
						\not\in B_m$, then $c_{i - m}\left( z^{k^m} \right)$ is $\alpha$-calm and
						if $\alpha \in B_m$, then the valuation
						\begin{align*}
							&v_{\alpha}\left( \psi'_m\left( z \right) c_{i - m}\left( z^{k^m} \right)
							\right) =\\
							&v_{\alpha}\left( h\left( z \right) \right) + v_{\alpha}\left(
							\prod_{\alpha \in B_m} \left( \alpha - z \right)^{-\min_{1 \leq j \leq
							n}v_{\alpha^{k^m}}\left( c_j\left( z \right) \right)} \right) +\\
							&v_{\alpha}\left( d'_{m, m - 1}\left( z
							\right) \right) + v_{\alpha}\left( c_{i - m}\left( z^{k^m} \right)
							\right) \geq\\
							&-\min_{1 \leq j \leq n}v_{\alpha^{k^m}}\left( c_j\left(
							z \right) \right) + v_{\alpha^{k^m}}\left( c_{i - m}\left( z \right)
							\right) \geq 0.
						\end{align*}
						This finishes the proof by induction.

						Therefore, we constructed an operator corresponding to a Mahler equation
						of order $n + m_0$ satisfied by $f\left( z \right)$ with a calm sequence
						of coefficients $\left( d'_{i, m_0} \right)_{i = 1}^{n + m_0}$.
					\item[\pointref{it:isRegular}$\Rightarrow$\pointref{it:technicalNonroot}]
						As in Theorem \ref{thm:orderOneCriteria}, we follow the idea of the proof
						in Dumas \cite[Proposition 54]{Dumas93}. Consider the
						equations corresponding to the special operators for $f$. We can apply $m + 1$
						arbitrary Cartier operators to the equation $\Gamma_m f = 0$, which gives us
						\begin{equation}\label{eq:lambdaOnSpecial}
							\Lambda_{r_m}\left( \dots \left( \Lambda_{r_0}\left( f\left( z \right) \right)
							\right) \right) = \sum_{i = 1}^n \Lambda_{r_m}\left( \dots \left( \Lambda_{r_0}\left(
							d_{m + i, m}\left( z \right)	\right)\right)\right)f\left( z^{k^{i - 1}} \right).
						\end{equation}
						However, by assumption that $f$ satisfies no Mahler equations of order less
						than $n$, the terms $f\left( z^{k^{i - 1}} \right)$ on the right hand side are
						$\ck\left( z \right)$-linearly independent. If the $\ck$-vector space $V$
						spanned by $\Lambda_{r_m}\left( \dots \left( \Lambda_{r_0}\left( f\left( z \right)
						\right) \right) \right)$ was finitely dimensional, then its generators would
						be of the form 
						\begin{equation*}
							\sum_{i = 1}^{n} h_i\left( z \right)f\left(
							z^{k^{i - 1}}	\right)
						\end{equation*}
							for some rational functions $h_i\left( z \right) \in \ck\left( z
							\right)$. Since $h_i$ have only finitely many
						poles of finite order and by the $\ck\left( z \right)$-linear
						independence of $f\left( z^{k^{i - 1}} \right)$, $V \subset h\left( z
						\right)\sum_{i = 1}^{n}\ck\left[ z \right]f\left( z^{k^{i - 1}} \right)$
						for some $0 \neq h \in \ck \left( z \right)$. It remains to prove that if
						condition \pointref{it:technicalNonroot} is not satisfied, then we can find
						elements of $V$ for which this is not true.

						If condition \pointref{it:technicalNonroot} is not satisfied, then we can
						pick arbitrarily large $m$ such that a certain $d_{i, m}\left( z \right)$
						has a pole at some $\alpha \in \cV$. Then, by a repeated application of Lemma
						\ref{lem:polePerserving}, we can choose $r_i$ ($0 \leq i \leq m$) so that
						\begin{equation*}
							0 > v_\alpha\left( \Lambda_{r_m}\left( \dots \Lambda_{r_0}\left( d_{i, m}
							\right) \right) \left( z^{k^{m+1}} \right) \right).
						\end{equation*}
						However,
						\begin{equation*}
							v_\alpha\left( \Lambda_{r_m}\left( \dots \Lambda_{r_0}\left( d_{i, m}
							\right) \right) \left( z^{k^{m+1}} \right) \right) =
							v_{\alpha^{k^{m + 1}}}\left( \Lambda_{r_m}\left( \dots \Lambda_{r_0}\left(
							d_{i, m} \right) \right) \right).
						\end{equation*}
						Therefore, we see by \eqref{eq:lambdaOnSpecial} that $h\left( z
						\right)$ has a pole at $\alpha^{k^{m + 1}}$ for arbitrarily large $m$.
						Since $\alpha$ is not a root of unity this shows that $h\left( z
						\right)$ has infinitely many poles which yields a contradiction, since $h$
						is a rational function. \qedhere
				\end{description}
			\end{proof}

	\clearpage
	
	Tomasz Kisielewski,\\Institute of Mathematics,\\Jagiellonian University,\\ul. prof. Stanisława Łojasiewicza 6,\\30-348 Kraków,\\Poland,\\e-mail: \url{tomasz.kisielewski@uj.edu.pl}

\begin{thebibliography}{99}
		\bibitem{AdamczewskiBell13} B. Adamczewski, J. P. Bell, \emph{A problem around
			Mahler functions}, arXiv:1303.2019 (2013).
		\bibitem{AlloucheShallit92} J.-P. Allouche, J. Shallit, \emph{The Ring of
			$k$-Regular Sequences}, Theoretical Computer Science (1993).
		\bibitem{AlloucheShallit03paper} J.-P. Allouche, J. Shallit, \emph{The Ring of
			$k$-Regular Sequences II}, Theoretical Computer Science (2003).
		\bibitem{AlloucheShallit03} J.-P. Allouche, J. Shallit, \emph{Automatic
			Sequences: Theory, Applications, Generalizations},
			Cambridge University Press (2003).
		\bibitem{Becker92} P.-G. Becker, \emph{$k$-Regular Power Series and Mahler-Type
			Functional Equations}, Journal Of Number Theory \textbf{49} (1994), 269-286.
		\bibitem{Cobham72} A. Cobham, \emph{Uniform tag sequences}, Math. Systems
			Theory \textbf{6} (1972), 164-192.
		\bibitem{Dumas93} P. Dumas, \emph{Récurrences Mahlériennes, Suites
			Automatiques, Études Asymptotiques}, Doctoral thesis (1993).
		\bibitem{Mahler29} K. Mahler, \emph{Arithmetische Eigenschaften der Lösungen
			einer Klasse von Funktionalgleichungen}, Math. Ann. \textbf{101} (1929), 342-367.
		\bibitem{Mahler30-1} K. Mahler, \emph{Arithmetische Eigenschaften einer Klasse
			transzendental-transzendente Funktionen}, Math. Z. \textbf{32} (1930), 545-585.
		\bibitem{Mahler30-2} K. Mahler, \emph{Über das Verschwinden von Potenzreihen
			mehrerer Veränderlichen in speziellen Punktfolgen}, Math. Ann. \textbf{103} (1930),
			573-587.
		\bibitem{Nishioka96} K. Nishioka, \emph{Mahler Functions and Transcendence},
			Springer (1996).
		\bibitem{Serre79} J.-P. Serre, \emph{Local fields}, Graduate Texts in
			Mathematics, vol. 67 (1979).
	\end{thebibliography}
\end{document}